\documentclass[11pt,letterpaper,reqno]{amsart}

\usepackage[T1]{fontenc}

\usepackage{amsmath}
\usepackage{amssymb}
\usepackage{amsfonts}
\usepackage{amsthm}
\usepackage{bbm}

\usepackage{enumitem}
\usepackage{booktabs}

\usepackage{graphicx}
\usepackage{float}
\usepackage{tikz}
\usepackage{pgfplots}
\pgfplotsset{compat=1.18}
\usetikzlibrary{positioning,shapes.geometric,arrows.meta}

\usepackage{xcolor}
\definecolor{darkgreen}{HTML}{006400}
\usepackage[
colorlinks=true,
linkcolor=blue,
citecolor=green,
urlcolor=magenta,
pdfstartview=FitH
]{hyperref}
\usepackage{doi}

\newtheoremstyle{plainnobrackets}
  {\topsep}{\topsep}{\itshape}{} {\bfseries}{.}{0.5em}
  {\thmname{#1}\thmnumber{ #2}\thmnote{ #3}}
\newtheoremstyle{definitionnobrackets}
  {\topsep}{\topsep}{\normalfont}{} {\bfseries}{.}{0.5em}
  {\thmname{#1}\thmnumber{ #2}\thmnote{ #3}}
\newtheoremstyle{remarknobrackets}
  {\topsep}{\topsep}{\normalfont}{} {\itshape}{.}{0.5em}
  {\thmname{#1}\thmnumber{ #2}\thmnote{ #3}}

\theoremstyle{plainnobrackets}
\newtheorem{thm}{Theorem}[section]
\newtheorem{lem}[thm]{Lemma}
\newtheorem{prop}[thm]{Proposition}
\newtheorem{cor}[thm]{Corollary}

\newtheorem{prob}[thm]{Problem}

\newtheorem{conj}[thm]{Conjecture}

\theoremstyle{definitionnobrackets}
\newtheorem{eg}[thm]{Example}
\newtheorem{defin}[thm]{Definition}

\theoremstyle{remarknobrackets}
\newtheorem{rem}[thm]{Remark}

\numberwithin{equation}{section}

\newcommand{\sa}{\mathrm{sa}}

\newcommand{\bC}{\mathbb{C}}

\newcommand{\bM}{\mathbb{M}}

\newcommand{\bR}{\mathbb{R}}

\newcommand{\sym}{\mathrm{sym}}
\newcommand{\qsym}{\mathrm{qsym}}
\newcommand{\trS}{\mathcal{S}}

\newcommand{\trE}{\mathcal{E}}
\newcommand{\eps}{\varepsilon}
\newcommand{\absop}[1]{\left|#1\right|}
\DeclareMathOperator{\Tr}{Tr}
\DeclareMathOperator{\Ree}{Re}
\DeclareMathOperator{\Imm}{Im}

\newcommand{\tr}{\operatorname{Tr}}

\newcommand{\absv}[1]{\left|#1\right|}

\newcommand{\qsma}[1]{\left|#1\right|_{\mathrm{qsym}}}
\begin{document}

\title[Matrix triangle inequalities]{Matrix Triangle Inequalities: Past, Present, and Future}

\author[J.-C.~Bourin]{Jean-Christophe Bourin}
\address{Universit\'e Marie et Louis Pasteur, CNRS, LmB (UMR 6623), F-25000 Besan\c{c}on, France}
\email{jcbourin@univ-fcomte.fr}

\author[T.~Zhang]{Teng Zhang}
\address{School of Mathematics and Statistics, Xi'an Jiaotong University, Xi'an 710049, P.~R.~China}
\email{teng.zhang@stu.xjtu.edu.cn}

\subjclass[2020]{Primary 47A30; Secondary 15A60}
\keywords{matrix inequalities, unitarily invariant norms, majorization, Thompson's triangle inequality}

\thanks{The second author is supported by the China Scholarship Council, the Young Elite Scientists Sponsorship Program for PhD Students of the China Association for Science and Technology, and the Fundamental Research Funds for the Central Universities at Xi'an Jiaotong University (Grant No.~xzy022024045).}

\begin{abstract}
We revisit several classical matrix inequalities related to the celebrated Thompson triangle inequality. We then present the second author's version of Thompson's inequality and related results. Finally, we survey a number of recent and elegant matrix triangle inequalities that may be viewed as cousins of Thompson's inequality. Taken together, these results illustrate the central role played by various notions of modulus in the noncommutative setting. Their interplay opens new perspectives in matrix analysis and operator theory.
\end{abstract}

\maketitle
\begingroup
\hypersetup{linkcolor=darkgreen}
\tableofcontents
\endgroup

\section{Introduction}
Within the vast literature on matrix and operator inequalities, one may distinguish two special classes illustrating how matrix analysis can be viewed as a noncommutative counterpart of classical analysis. The first class transforms scalar identities into matrix inequalities, whereas the second provides matrix analogues of scalar inequalities. We briefly review a few representative examples in this introductory section.

Let $\bM_n$ denote the algebra of complex $n\times n$ matrices (also regarded as operators on an $n$-dimensional Hilbert space) , let $\bM_n^+$ be its positive semidefinite cone, and let $\bM_n^{\sa}$ be its Hermitian part.  A celebrated example of the first class is the Golden--Thompson inequality \cite{Gol65,Tho65},
\[
\Tr e^{A+B}\le \Tr(e^Ae^B),
\qquad A,B\in\bM_n^{\sa}.
\]

Turning to the second class, one of the earliest examples is the trace inequality for convex functions. Let  $g(t)$ be a convex function defined on an interval containing the spectra of two Hermitian matrices $A$ and $B$ in $\bM_n$. Then
\begin{equation}\label{vNconv}
{\mathrm{Tr}\,} g\left(\frac{A+B}{2}\right) \le \frac{{\mathrm{Tr}\,} g(A) +{\mathrm{Tr}\,} g(B)}{2}.
\end{equation}
With $g(t)=t\log t$, this brings the concavity of von Neumann's entropy \cite{Neu27}, and thus we call it von Neumann's convexity inequality, although inequalities of this type were likely known earlier. Another famous example in our second class is   Thompson's triangle inequality \cite{Tho76}. 
This theorem states that if $A,B\in\bM_n$, then there exist  unitary matrices $U,V\in\bM_n$ such that
\begin{equation}\label{TI}
|A+B| \le U|A|U^*+V|B|V^*,
\end{equation}
where $|A|=(A^*A)^{1/2}$ denotes the right modulus of $A$.  One of Thompson's motivations was to provide a simplified approach to the  Rotfel'd trace inequality \cite{Rot69}, which states
  that, for any concave function $f:[0,\infty)\to [0,\infty)$,
\begin{equation}\label{Rot}
\mathrm{Tr}\, f(|A+B|) \le \mathrm{Tr}\, f(|A|) +  \mathrm{Tr}\, f(|B|).
\end{equation}
Over the last 25 years, a host of elegant inequalities have been established that extend or complement these two beautiful subadditivity theorems of Rotfel'd \eqref{Rot} and Thompson \eqref{TI}.  

This article, mostly of expository nature,  is concerned with classical results related to  Thompson's inequality, as well as recent symmetrized versions. In  Section~\ref{s:thompson}, we establish   Thompson's inequality \eqref{TI}.  Ky Fan $k$-norms and Schatten $p$-norms are studied in Section~\ref{s:symmetric-norms}, and we show that the triangle inequality for these classical norms can be derived from Thompson's inequality. 
In Section~\ref{s:majorization}, 
 we  develop  a rather original approach of majorization theory,  thereby completing our study of symmetric norms. 
Section~\ref{s:right} examines what remains of Thompson-type estimates when
the unitary conjugations on the right-hand side are removed, and explains why
additional constants or averaging procedures are then unavoidable. We then turn to symmetrized versions of the modulus.
Section~\ref{s:quadratic} is devoted to the quadratic symmetric modulus
\[
|Z|_{\qsym}:=
\left(\frac{|Z|^2+|Z^*|^2}{2}\right)^{1/2}.
\]
A remarkable feature of this modulus is that it still satisfies a
Thompson-type triangle inequality: for every \(A,B\in\bM_n\), there exist
unitary matrices \(U,V\in\bM_n\) such that
\[
|A+B|_{\qsym}
\le
U|A|_{\qsym}U^*+V|B|_{\qsym}V^* .
\]
Section~\ref{s:symmetric} studies the arithmetic symmetric modulus
\[
|Z|_{\sym}:=\frac{|Z|+|Z^*|}{2},
\]
for which the direct Thompson-type inequality fails in general, but several
weaker or averaged forms remain available.
Finally, Section~\ref{s:hybrid} collects recent hybrid inequalities which
compare these different moduli and may be viewed as further variations on
Thompson's triangle inequality. Hence the topic of matrix triangle inequalities is, 50 years after Thompson, a very active field of research.

\section{Thompson's inequality}\label{s:thompson}

We begin with Thompson's inequality itself. The proof is based on the Fan--Hoffman lemma, and the same method also gives a useful Cartesian version.

  If $A\in\bM_n$ is  Hermitian, we denote by 
$\lambda_1^{\downarrow}(A)\ge \cdots\ge  \lambda_n^{\downarrow}(A)$  its eigenvalues arranged  in   nonincreasing order, and by
$
A^{\downarrow}$ the diagonal matrix in $\bM_n$ having these eigenvalues on the diagonal.

We  assume the reader is familiar with the min-max principle. This principle shows that, 
 for Hermitian matrices $A,B\in\bM_n$, the eigenvalue order is equivalent to the unitary orbit order:
$\lambda_j^{\downarrow}(A)\le \lambda_j^{\downarrow}(B)$ for all $j=
1,\ldots,n$, if and only if $A\le VBV^*$ for some unitary  $V$; see \cite[p. 74, Exercise III.5.5]{Bha97}. 
 If $A=VBV^*$, we write $A\simeq B$, and say that $A$ and $B$ are unitarily equivalent.

For $Z\in\bM_n$, we denote its real, or Hermitian part by
$$
{\mathrm{Re\,}}Z :=\frac{Z+Z^*}{2}.
$$
Similarly, its imaginary part is also the Hermitian matrix
$$
{\mathrm{Im\,}}Z :=\frac{Z-Z^*}{2i}  ={\mathrm{Re\,}}(-iZ).
$$
Thus $Z={\mathrm{Re\,}}Z + i{\mathrm{Im\,}}Z$ which is called the decomposition in real and imaginary parts, or the Toeplitz decomposition, or the Cartesian decomposition.  The Fan-Hoffman lemma \cite{FH55} states:

\vskip 5pt\noindent
\begin{lem}\label{lemFH} Let $Z\in\bM_n$. Then, there exists a unitary matrix $V\in\bM_n$ such that
$$
{\mathrm{Re\,}}Z \le V|Z|V^*.
$$
\end{lem}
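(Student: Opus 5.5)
By the unitary orbit characterization recalled just before the statement (via the min-max principle), it suffices to show that
\[
\lambda_j^{\downarrow}({\mathrm{Re\,}}Z)\le \lambda_j^{\downarrow}(|Z|), \qquad j=1,\ldots,n.
\]
A unitary $V$ with ${\mathrm{Re\,}}Z\le V|Z|V^*$ is then provided by that equivalence. So the whole task is an eigenvalue comparison, and the plan is to obtain it from the min-max principle applied to both ${\mathrm{Re\,}}Z$ and $|Z|$.

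Fix $j$. Let $e_1,\dots,e_n$ be an orthonormal basis of eigenvectors of ${\mathrm{Re\,}}Z$, ordered so that $({\mathrm{Re\,}}Z)e_i=\lambda_i^{\downarrow}({\mathrm{Re\,}}Z)e_i$. Let $f_1,\dots,f_n$ be an orthonormal eigenbasis of $|Z|$, ordered similarly. Set
\[
\mathcal{E}=\operatorname{span}\{e_1,\dots,e_j\}, \qquad \mathcal{F}=\operatorname{span}\{f_j,\dots,f_n\}.
\]
The first space has dimension $j$ and the second has dimension $n-j+1$, so they intersect nontrivially. Choose a unit vector $x\in\mathcal{E}\cap\mathcal{F}$. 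Then
\[
\lambda_j^{\downarrow}({\mathrm{Re\,}}Z)\le \langle ({\mathrm{Re\,}}Z)x,x\rangle=\Ree\langle Zx,x\rangle\le |\langle Zx,x\rangle|\le \|Zx\|.
\]
Since $x\in\mathcal{F}$ and $\|Zx\|^2=\langle |Z|^2x,x\rangle$, we also have
\[
\|Zx\|=\langle |Z|^2x,x\rangle^{1/2}\le \lambda_j^{\downarrow}(|Z|),
\]
because $|Z|^2$ has eigenvalues at most $\lambda_j^{\downarrow}(|Z|)^2$ on $\mathcal{F}$. Combining the two chains gives the desired inequality for this $j$, and $j$ was arbitrary.

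The main point requiring care is the step $\Ree\langle Zx,x\rangle\le\|Zx\|$. Here Cauchy--Schwarz is used with $\|x\|=1$, so that $|\langle Zx,x\rangle|\le\|Zx\|\,\|x\|=\|Zx\|$. One must then bound $\|Zx\|$ by the \emph{same} index $j$ of $|Z|$. This is why a single vector $x$ is taken from the intersection of the top-$j$ eigenspace of ${\mathrm{Re\,}}Z$ with the bottom-$(n-j+1)$ eigenspace of $|Z|$, rather than applying min-max separately to each operator. Everything else is the routine dimension count and the standard Courant--Fischer bounds.
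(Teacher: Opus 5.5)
Your proof is correct and follows the paper's route. Both reduce, via the unitary orbit order, to $\lambda_j^{\downarrow}(\Ree Z)\le\lambda_j^{\downarrow}(|Z|)$, and both combine the pointwise bound $\Ree\langle Zh,h\rangle\le\|Zh\|=\langle |Z|^2h,h\rangle^{1/2}$ with Courant--Fischer. The only difference is that you inline the dimension-count proof of the min--max inequality, where the paper quotes the max--min formula. Your closing remark overstates one point: the paper does apply the max--min formula to each operator separately, taken over the same subspace $S$, and that pointwise comparison already suffices without choosing a single vector from an intersection.
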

\vskip 5pt\noindent

\begin{proof} Denote by $\Lambda_j$ the set of all $j$-dimensional subspaces of $\bC^n$, and let $S^1$ be the unit sphere in the subspace $S$. Then,
$$
\lambda_j^{\downarrow} ({\mathrm{Re\,}}Z) =\max_{S\in\Lambda_j} \min_{h\in S^1} \langle h, {\mathrm{Re\,}}Zh\rangle= \max_{S\in\Lambda_j} \min_{h\in S^1}  {\mathrm{Re\,}} \langle h, Zh\rangle\le
  \max_{S\in\Lambda_j}\min_{h\in S^1}  \| Zh\|.
$$
We might stop our proof there, or detail a little bit more, hence establishing the min-max principle for singular values from the min-max principle for the eigenvalues:
$$
  \max_{S\in\Lambda_j}\min_{h\in S^1}  \| Zh\|=   \max_{S\in\Lambda_j}\min_{h\in S^1}  \langle h, |Z|^2h\rangle^{1/2}=\bigl(\lambda_j^{\downarrow}(|Z|^2)\bigr)^{1/2}=\lambda_j^{\downarrow}(|Z|).
$$
Combining the inequality in the first part of the proof with the equality of the second part completes the proof.
\end{proof}

Using this lemma, Thompson \cite{Tho76} derived in a simple but clever way his famous inequality:

\vskip 5pt\noindent
\begin{thm}\label{thT} Let $A,B\in\bM_n$. Then there exist unitary matrices $U,V\in\bM_n$ such that 
$$|A+B|\le  U|A|U^*+V|B|V^*.
$$
\end{thm}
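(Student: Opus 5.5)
The plan is to reduce the statement to the Fan--Hoffman Lemma~\ref{lemFH}, using the polar decomposition of $A+B$. Write $A+B=W|A+B|$ with $W$ unitary; in finite dimensions a unitary polar factor always exists. Then $|A+B|=W^*(A+B)=W^*A+W^*B$. Since $|A+B|$ is Hermitian, it equals its own real part, so
\[
|A+B|={\mathrm{Re\,}}|A+B|={\mathrm{Re\,}}(W^*A)+{\mathrm{Re\,}}(W^*B),
\]
by linearity of $Z\mapsto {\mathrm{Re\,}}Z$.

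Next apply Lemma~\ref{lemFH} separately to $Z=W^*A$ and to $Z=W^*B$. This yields unitaries $U,V$ such that ${\mathrm{Re\,}}(W^*A)\le U|W^*A|U^*$ and ${\mathrm{Re\,}}(W^*B)\le V|W^*B|V^*$. The right modulus is invariant under left multiplication by a unitary:
\[
|W^*A|=(A^*WW^*A)^{1/2}=(A^*A)^{1/2}=|A|,
\]
and likewise $|W^*B|=|B|$. Adding the two operator inequalities, which is legitimate since the Loewner order is preserved under sums, gives $|A+B|\le U|A|U^*+V|B|V^*$.

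The only point needing care is choosing the polar factor $W$ unitary rather than merely a partial isometry. This matters because we need both $W^*(A+B)=|A+B|$ and $WW^*=I$ to remove $W$ from the moduli. In $\bM_n$ this is standard: any partial isometry from $\overline{\mathrm{ran}}\,|A+B|$ onto $\mathrm{ran}(A+B)$ extends to a unitary, since the orthogonal complements of these two ranges have the same dimension. The rest is direct, so no real obstacle is expected.
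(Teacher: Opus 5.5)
Your proposal is correct and follows the paper's proof essentially verbatim. The paper also takes a polar decomposition $A+B=Q|A+B|$, extends $Q$ to a unitary, sets $W=Q^*$ to write $|A+B|={\mathrm{Re\,}}(WA)+{\mathrm{Re\,}}(WB)$, and applies the Fan--Hoffman lemma using $|WA|=|A|$ and $|WB|=|B|$; your justification of the unitary extension (the orthogonal complements of the two ranges have equal dimension) is a correct detail that the paper leaves implicit.
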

\vskip 5pt\noindent
\begin{proof}
	Let $A+B=Q|A+B|$ be a polar decomposition, and extend the polar
	partial isometry $Q$ to a unitary matrix. With $W=Q^*$, we have
	$|A+B|=W(A+B)$. Since the left-hand side is Hermitian,
	\[
	|A+B|=\Ree(WA)+\Ree(WB).
	\]
	Applying the Fan--Hoffman lemma to $WA$ and $WB$, and using
	$|WA|=|A|$ and $|WB|=|B|$, proves the result.
\end{proof}

Taking $A={\mathrm{Re\,}}Z$ and $B=i{\mathrm{Im\,}}Z$ in Theorem~\ref{thT} gives the following Cartesian consequence.

\vskip 5pt\noindent
\begin{cor}\label{corTcartesian} Let $Z\in\bM_n$. Then, there exist unitary matrices $U,V\in\bM_n$ such that 
$$|Z|\le  U|{\mathrm{Re\,}}Z|U^*+V|{\mathrm{Im\,}}Z|V^*.
$$
\end{cor}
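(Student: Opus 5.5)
The plan is to apply Theorem~\ref{thT} to the Cartesian decomposition $Z=A+B$, with $A={\mathrm{Re\,}}Z$ and $B=i\,{\mathrm{Im\,}}Z$. Theorem~\ref{thT} then supplies unitaries $U,V\in\bM_n$ such that
\[
|Z|=|A+B|\le U|A|U^*+V|B|V^*.
\]
It remains only to identify the two moduli on the right-hand side with $|{\mathrm{Re\,}}Z|$ and $|{\mathrm{Im\,}}Z|$.

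For the first term, $A={\mathrm{Re\,}}Z$ is Hermitian, so $|A|=(A^*A)^{1/2}=(A^2)^{1/2}$. This is exactly the modulus $|{\mathrm{Re\,}}Z|$ appearing in the statement, so nothing needs to be done.

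For the second term, write $H={\mathrm{Im\,}}Z$, which is Hermitian, so that $B=iH$. Then $B^*B=(-iH)(iH)=H^2$, and hence $|B|=(H^2)^{1/2}=|{\mathrm{Im\,}}Z|$. More generally, multiplying an operator by a unimodular scalar does not change its right modulus.

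Substituting these two identities into the displayed inequality gives $|Z|\le U|{\mathrm{Re\,}}Z|U^*+V|{\mathrm{Im\,}}Z|V^*$, which is the claim. There is no genuine obstacle. The only point that needs a moment's care is the computation $|i\,{\mathrm{Im\,}}Z|=|{\mathrm{Im\,}}Z|$, which follows from $(iH)^*(iH)=H^2$.
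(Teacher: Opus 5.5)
Your proof is correct and is exactly the paper's argument: apply Theorem~\ref{thT} to $A={\mathrm{Re\,}}Z$ and $B=i\,{\mathrm{Im\,}}Z$. Your check that $|i\,{\mathrm{Im\,}}Z|=|{\mathrm{Im\,}}Z|$, via $(iH)^*(iH)=H^2$, fills in the one detail the paper leaves implicit.
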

\vskip 5pt\noindent

A subadditivity inequality of Aujla and Bourin \cite{AB07}, for positive matrices $A,B\in\bM_n^+$ and nonnegative concave functions $f(t)$ on the positive half-line, states that
\begin{equation}\label{eq:Auj-Bou}
	f(A+B)\le  Uf(A)U^*+Vf(B)V^*.
\end{equation}
Such a function $f(t)$ is necessarily nondecreasing, so using Thompson's inequality we have a refinement of Rotfel'd inequality:

\vskip 5pt\noindent
\begin{thm}\label{thTAB} Let $A,B\in\bM_n$ and let $f:[0,\infty)\to[0,\infty)$ be concave. Then, there exist unitary matrices $U,V$ such that 
$$f(|A+B|)\le  Uf(|A|)U^*+Vf(|B|)V^*.
$$
\end{thm}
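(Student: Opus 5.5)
The plan is to combine Thompson's inequality (Theorem~\ref{thT}) with the Aujla--Bourin subadditivity inequality \eqref{eq:Auj-Bou}, linking them through the monotonicity of $f$.

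First, I will record that a concave function $f:[0,\infty)\to[0,\infty)$ is nondecreasing. If $f(s)>f(t)$ for some $s<t$, concavity forces $f$ to decrease at least linearly beyond $t$, so $f$ becomes negative for large arguments, which is a contradiction.

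Next, Theorem~\ref{thT} gives unitaries $U_0,V_0$ such that
\[
|A+B|\le U_0|A|U_0^*+V_0|B|V_0^*.
\]
Both sides are positive semidefinite. By the equivalence of eigenvalue order and unitary orbit order recalled before Lemma~\ref{lemFH}, this means
\[
\lambda_j^{\downarrow}(|A+B|)\le \lambda_j^{\downarrow}\bigl(U_0|A|U_0^*+V_0|B|V_0^*\bigr)\quad\text{for all } j.
\]
Since $f$ is nondecreasing, $\lambda_j^{\downarrow}(f(X))=f(\lambda_j^{\downarrow}(X))$ for any $X\in\bM_n^+$. Hence
\[
\lambda_j^{\downarrow}\bigl(f(|A+B|)\bigr)\le \lambda_j^{\downarrow}\bigl(f(U_0|A|U_0^*+V_0|B|V_0^*)\bigr),
\]
and so there is a unitary $W$ with
\[
f(|A+B|)\le W f\bigl(U_0|A|U_0^*+V_0|B|V_0^*\bigr)W^*.
\]
This eigenvalue-to-orbit conversion is the only delicate step. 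Operator monotonicity of $f$ is not available, so I will not compare $f$ of the two sides directly; I will pass through the eigenvalues instead.

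Then I will apply \eqref{eq:Auj-Bou} to the positive matrices $U_0|A|U_0^*$ and $V_0|B|V_0^*$. This yields unitaries $U_1,V_1$ with
\[
f\bigl(U_0|A|U_0^*+V_0|B|V_0^*\bigr)\le U_1 f(U_0|A|U_0^*)U_1^*+V_1 f(V_0|B|V_0^*)V_1^*.
\]
By functional calculus, $f(U_0|A|U_0^*)=U_0 f(|A|)U_0^*$, and similarly for the $B$ term. Conjugating by $W$ preserves the order, so the statement follows with $U=WU_1U_0$ and $V=WV_1V_0$.
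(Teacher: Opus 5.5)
Your proof is correct and follows essentially the same route as the paper: Thompson's inequality, then the passage from eigenvalue order to unitary-orbit order using that $f$ is nondecreasing, then the Aujla--Bourin inequality applied to $U_0|A|U_0^*$ and $V_0|B|V_0^*$, with the unitaries absorbed at the end. Your justification that $f$ is nondecreasing and your explicit choice $U=WU_1U_0$, $V=WV_1V_0$ simply fill in details the paper leaves implicit.
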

\vskip 5pt\noindent

\begin{proof}
By Thompson's inequality, for some unitaries $U_0,V_0$,
$$
|A+B|\le P:=U_0|A|U_0^*+V_0|B|V_0^*.
$$
Since $f$ is nonnegative and concave on $[0,\infty)$, it is nondecreasing. Hence the eigenvalue order gives
$\lambda_j(f(|A+B|))\le \lambda_j(f(P))$ for all $j$. By the unitary orbit order, this means that
$f(|A+B|)\le Wf(P)W^*$ for some unitary $W$. Applying the Aujla--Bourin inequality to $P$ gives
$$
f(P)\le U_1f(U_0|A|U_0^*)U_1^*+V_1f(V_0|B|V_0^*)V_1^*.
$$
Absorbing the unitaries into $U$ and $V$ completes the proof.
\end{proof}

Applying Theorem~\ref{thTAB} to the Cartesian decomposition $Z={\mathrm{Re\,}}Z+i{\mathrm{Im\,}}Z$ yields the next corollary.

\vskip 5pt\noindent
\begin{cor}\label{corABcartesian} Let $Z\in\bM_n$ and let $f:[0,\infty)\to[0,\infty)$ be concave. Then, there exist unitary matrices $U,V\in\bM_n$ such that 
$$f(|Z|)\le  Uf(|{\mathrm{Re\,}}Z|)U^*+Vf(|{\mathrm{Im\,}}Z|)V^*.
$$
\end{cor}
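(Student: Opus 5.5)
The corollary should follow from Theorem~\ref{thTAB} applied to the Cartesian decomposition, in the same way that Corollary~\ref{corTcartesian} follows from Theorem~\ref{thT}. We take $A={\mathrm{Re\,}}Z$ and $B=i\,{\mathrm{Im\,}}Z$, so that $A+B=Z$ and therefore $|A+B|=|Z|$. Theorem~\ref{thTAB} then provides unitaries $U,V\in\bM_n$ with
$$
f(|Z|)\le Uf(|{\mathrm{Re\,}}Z|)U^*+Vf(|i\,{\mathrm{Im\,}}Z|)V^*.
$$

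The only remaining task is to identify the moduli that appear. For a Hermitian $H$, the right modulus is $|H|=(H^*H)^{1/2}=(H^2)^{1/2}$, which is the usual absolute value of $H$ given by functional calculus. Since $|{\mathrm{Re\,}}Z|$ already appears in the desired form, it needs no further comment. For the second term we compute
$$
(i\,{\mathrm{Im\,}}Z)^*(i\,{\mathrm{Im\,}}Z)=(-i)(i)({\mathrm{Im\,}}Z)^2=({\mathrm{Im\,}}Z)^2,
$$
which gives $|i\,{\mathrm{Im\,}}Z|=|{\mathrm{Im\,}}Z|$. Substituting this into the display yields the corollary with the same unitaries $U,V$.

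No step here presents a real obstacle, because all the substantive work (Thompson's inequality combined with the Aujla--Bourin subadditivity) is already contained in Theorem~\ref{thTAB}. The one point that needs care is that the hypotheses of Theorem~\ref{thTAB} are satisfied. That theorem is stated for arbitrary $A,B\in\bM_n$ and arbitrary concave $f:[0,\infty)\to[0,\infty)$, so the non-Hermitian summand $i\,{\mathrm{Im\,}}Z$ is allowed.
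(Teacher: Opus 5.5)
Your proposal is correct and matches the paper's argument exactly: the paper also obtains this corollary by applying Theorem~\ref{thTAB} to the Cartesian decomposition $Z={\mathrm{Re\,}}Z+i\,{\mathrm{Im\,}}Z$. Your explicit check that $|i\,{\mathrm{Im\,}}Z|=|{\mathrm{Im\,}}Z|$ supplies the small identification the paper leaves implicit.
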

\vskip 5pt\noindent

Note that we always have 
$
\| Z\|_{\infty} \le \| |{\mathrm{Re\,}}Z| + |{\mathrm{Im\,}}Z|\|_{\infty}
$
and so, since $f(t)$ is necessarily nondecreasing,
$$ \| f(|Z|)\|_{\infty} \le \| f(|{\mathrm{Re\,}}Z| + |{\mathrm{Im\,}}Z|)\|_{\infty}.$$
However, for every unitarily invariant norm, or symmetric norm
(to be defined in the next subsection), that is not a scalar multiple of
$\|\cdot\|_{\infty}$, the reverse strict inequality can occur:
\[
\| f(|Z|)\|
>
\| f(|{\mathrm{Re\,}}Z|+|{\mathrm{Im\,}}Z|)\|
\]
for some $Z\in\bM_2$ and some concave function $f$, see \cite{Bou10}.
Thus Corollary~\ref{corABcartesian} is a subtle statement, and
Thompson's inequality highlights a genuinely noncommutative phenomenon.

\section{Symmetric norms}\label{s:symmetric-norms}

The polar decomposition invites us to consider norms on $\bM_n$ such that for all $Z\in\bM_n$,
$$\|Z\|=\| |Z|\|=\| |Z^*| \|.$$ These norms are characterized by  an invariance property, for all unitaries $U,V\in\bM_n$,
$$
\| Z\|= \| UZV\|.
$$
Hence, with the singular value decomposition $Z=U|Z|^{\downarrow}V^*$, we have $\| Z\|=\| |Z|^{\downarrow}\|$, where $|Z|^{\downarrow}:={\mathrm{diag}}(\lambda_1^{\downarrow}(|Z|),\ldots,\lambda_n^{\downarrow}(|Z|) )$. Such a norm is called a symmetric norm on $\bM_n$, or a unitarily invariant one. Furthermore, if $A,B\in\bM_n$ are two contractions, then
$$
\| AZB\| \le \| Z\|.
$$
Indeed, it suffices to consider the case $A,B\in\bM_n^+$ and to note that a positive contraction is a convex combination of Hermitian unitary matrices. In particular, for two positive matrices in $\bM_n$ such that $0\le X\le Y$,
we have $X=Y^{1/2}K^2Y^{1/2}\simeq KYK$ for some positive contraction $K$, so symmetric norms respect the order relation in $\bM_n^+$.

The three simplest examples of symmetric norms on $\bM_n$ are the operator norm $\| \cdot\|_{\infty}$, the Hilbert-Schmidt (or Frobenius)  norm $\|\cdot\|_2$ and the trace norm $\| \cdot\|_1$. For $A\in\bM_n$, its operator norm $\| A\|_{\infty}=\max\{ \|Ax\| \ : \: \|x\|=1\}$, where the symbol $\|\cdot\|$ also stands for the standard norm on $\bC^n$.

\vskip 5pt\noindent
\begin{defin}\label{defHSTN}
The Hilbert-Schmidt norm, or Frobenius norm $\|\cdot\|_2$ on $\bM_n$ is the norm associated to the canonical scalar product $\langle X,Y\rangle ={\mathrm{Tr}}\, X^*Y$. Thus, for all $A\in\bM_n$,
$$
\| A\|_2:= \left\{{\mathrm{Tr}} |A|^2\right\}^{1/2} =\left\{ \sum_{j=1}^n \lambda_j^{\downarrow}(|A|)^2\right\}^{1/2}.
$$
The trace norm $\|\cdot\|_1$ on $\bM_n$ is defined by
$$
\| A\|_1:= {\mathrm{Tr}} |A| = \sum_{j=1}^n \lambda_j^{\downarrow }(|A|).
$$
\end{defin}
\vskip 5pt\noindent

While the triangle inequality is obvious for the operator norm and the Hilbert-Schmidt norm, the case of the trace norm deserves a small proof. With Thompson's inequality at our disposal, the triangle inequality for $\|\cdot\|_1$ becomes obvious. Given $X,Y\in\bM_n$, there exist two unitary matrices $U,V\in\bM_n$  such that:
$$
\| X+Y\|_{1} ={\mathrm{Tr}}\, |X+Y| \le  {\mathrm{Tr}}\, U|X|U^* +  {\mathrm{Tr}}\, V|Y|V^* =\| X\|_1+\|Y\|_{1}.
$$

\vskip 5pt\noindent
\begin{rem} The Cauchy-Schwarz inequality for  $\|\cdot\|_2$ entails that $\|\cdot\|_1$ is a norm. Indeed, given a pair of matrices $X,Y\in\bM_n$ we have a unitary $U$ such that
$$
\| X+Y\|_{1} = {\mathrm{Tr}}\, U(X+Y)
\le |{\mathrm{Tr}}\, UX|+|{\mathrm{Tr}}\,UY|
\le\| X\|_1+\|Y\|_{1}.
$$
Here the last inequality follows from $| {\mathrm{Tr}}\, Z|\le  {\mathrm{Tr}}\, |Z|$ for all $Z\in\bM_n$. This is a consequence of the Cauchy-Schwarz inequality for the decomposition $Z=(V|Z|^{1/2})|Z|^{1/2}$.
\end{rem}
\vskip 5pt\noindent

Another important class of symmetric norms is given in the following definition. These norms form a natural scale between the operator norm and the trace norm.

\vskip 5pt\noindent
\begin{defin}\label{defKFN} The Ky Fan $k$-norms on $\bM_n$, $k=1,\ldots,n$ are the symmetric norms defined by
$$
\| X\|_{(k)}:=\max_{E\in\Pi_k } {\mathrm{Tr}}\, E|X|E =\sum_{j=1}^k\lambda_j^{\downarrow}(|X|)
$$
where $\Pi_k$ denotes the collection of all rank $k$ projections in $\bM_n$.
\end{defin}
\vskip 5pt\noindent

Hence $\|\cdot\|_1=\|\cdot\|_{(n)}$ and $\|\cdot\|_{\infty}=\|\cdot\|_{(1)}$, where  $\|\cdot\|_{\infty}$ stands for the operator norm.  Let us show that Definition \ref{defKFN} is consistent:

\begin{proof}
The second equality follows from the min--max principle, for any $k$-dimensional subspace $\trS\subset \bC^n$ (here we take $\trS$ as the range of $E$) we have  $\lambda_j^{\downarrow}(|X|_{\trS})\le \lambda_j^{\downarrow}(|X|)$. It is clear  that $\| X\|_{(k)}$ is  a homogeneous unitarily invariant functional. To prove that this is a symmetric norm, we use Thompson's inequality to obtain the triangle inequality for $\|\cdot\|_{(k)}$ as follows: For some rank $k$ projection $E$,
$$
\| X+Y\|_{(k)}=  {\mathrm{Tr}}\, E|X+Y|E, 
$$
and Thompson's inequality yields two unitary matrices $U$ and $V$ such that
\begin{align*}
\| X+Y\|_{(k)} &\le  {\mathrm{Tr}}\, EU|X|U^*E +  {\mathrm{Tr}}\, EV|Y|V^*E \\
&= {\mathrm{Tr}}\, U^*EU|X|U^*EU +  {\mathrm{Tr}}\, V^*EV|Y|V^*EV \\
&\le \| X\|_{(k)} + \| Y\|_{(k)}.
\end{align*}
So the Ky Fan functionals are symmetric norms.
\end{proof}

The following two propositions are other ways to look at Ky Fan $k$-norms. These propositions also show that the 
 Ky Fan $k$-norms are indeed norms.

\vskip 5pt\noindent
\begin{prop}\label{KFN2} The Ky Fan $k$-norms on $\bM_n$,  $k=1,\ldots,n$ satisfy
$$
\| X\|_{(k)}=\max_{A,B\in\Gamma_k } |{\mathrm{Tr}}\, AXB|
$$
where $\Gamma_k$ denotes all rank $\le k$ contractions in $\bM_n$. Equivalently,
$$
\| X\|_{(k)}=\max_{C\in\Gamma_k } |{\mathrm{Tr}}\, CX|.
$$
\end{prop}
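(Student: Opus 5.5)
We prove the two inequalities $\ge$ and $\le$ separately for the second formula. Then we deduce the first formula from it.

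\emph{Lower bound.} Take a singular value decomposition $X=W|X|^{\downarrow}V^*$. Let $E_k$ be the diagonal projection onto the first $k$ coordinates, and set $C:=VE_kW^*$. Then $C$ is a contraction of rank $k$, so $C\in\Gamma_k$. Moreover
$$
{\mathrm{Tr}}\, CX={\mathrm{Tr}}\, E_k|X|^{\downarrow}=\sum_{j=1}^k\lambda_j^{\downarrow}(|X|)=\|X\|_{(k)} .
$$
Hence $\max_{C\in\Gamma_k}|{\mathrm{Tr}}\, CX|\ge \|X\|_{(k)}$. For the first formula, choose instead $A:=VE_k$ and $B:=E_kW^*$. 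Both are rank $k$ partial isometries, and ${\mathrm{Tr}}\, AXB={\mathrm{Tr}}\, BAX={\mathrm{Tr}}\, CX$, so the same value is attained.

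\emph{Upper bound.} Let $C\in\Gamma_k$ and write a polar decomposition $CX=U|CX|$. The remark following Definition~\ref{defHSTN} gives
$$
|{\mathrm{Tr}}\, CX|\le {\mathrm{Tr}}\,|CX|=\|CX\|_1 .
$$
Since $C$ has rank $\le k$, so does $CX$. Therefore $|CX|$ has at most $k$ nonzero eigenvalues, and $\|CX\|_1=\|CX\|_{(k)}$. It remains to show $\|CX\|_{(k)}\le\|X\|_{(k)}$ for a contraction $C$. One route is to quote the property $\|AZB\|\le\|Z\|$ for contractions, valid for symmetric norms. That property was stated only for norms, though, and we want to avoid circularity. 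So we argue directly: $|CX|^2=X^*C^*CX\le X^*X=|X|^2$. The eigenvalue order then gives $\lambda_j^{\downarrow}(|CX|)\le\lambda_j^{\downarrow}(|X|)$ for every $j$, by the min--max principle and the monotonicity of the square root on scalars. Summing over $j\le k$ yields the bound.

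For the first formula, let $A,B\in\Gamma_k$. We have ${\mathrm{Tr}}\, AXB={\mathrm{Tr}}\,(BA)X$, and $BA$ is a contraction of rank $\le k$, so $BA\in\Gamma_k$. The upper bound just proved then controls ${\mathrm{Tr}}\, AXB$. Together the two parts give both identities. Norm properties now follow immediately, since each formula is a maximum of seminorms $X\mapsto|{\mathrm{Tr}}\, CX|$ and the result is clearly nondegenerate.

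\emph{Main obstacle.} The only delicate step is the inequality $\lambda_j^{\downarrow}(|CX|)\le\lambda_j^{\downarrow}(|X|)$. It reduces cleanly to the eigenvalue order applied to $X^*C^*CX\le X^*X$. Everything else is bookkeeping with the singular value decomposition and cyclicity of the trace.
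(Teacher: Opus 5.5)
Your overall strategy is correct, but the attainment step for the two-sided formula has a slip. With $A:=VE_k$ and $B:=E_kW^*$ you have $AB=C$. Cyclicity, however, gives $\mathrm{Tr}\,AXB=\mathrm{Tr}\,(BA)X$, and $BA=E_kW^*VE_k\neq C$ in general. The choice genuinely fails. Take $X=\begin{pmatrix}0&1\\0&0\end{pmatrix}$ and $k=1$, with $W=I$ and $V=\begin{pmatrix}0&1\\1&0\end{pmatrix}$. Then $AXB=0$, while $\|X\|_{(1)}=1$. Swapping the roles repairs it: with $A:=E_kW^*$ and $B:=VE_k$ you get $AXB=E_k|X|^{\downarrow}E_k$ and $BA=C$.

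With that correction your proof is complete, and for the upper bound it takes a different route from the paper.

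\emph{The paper's route.} It bounds the two-sided form directly. It writes $X=|X^*|^{1/2}U|X|^{1/2}$ and applies Cauchy--Schwarz to get $|\mathrm{Tr}\,AXB|^2\le \mathrm{Tr}\,|X^*|^{1/2}A^*A|X^*|^{1/2}\cdot\mathrm{Tr}\,|X|^{1/2}BB^*|X|^{1/2}$. It then replaces $A^*A$ and $BB^*$ by their rank-$\le k$ support projections. Finally it uses the compression bound $\mathrm{Tr}\,E|Z|E\le\sum_{j\le k}\lambda_j^{\downarrow}(|Z|)$ for $Z=X^*$ and for $Z=X$. The one-sided formula is treated as a special case, with equality at $C=PU^*$.

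\emph{Your route.} You go the other way: you prove the one-sided form first and reduce the two-sided form to it via $BA\in\Gamma_k$. Your upper bound rests on three facts: $|\mathrm{Tr}\,Z|\le\mathrm{Tr}\,|Z|$, $\mathrm{rank}(CX)\le k$, and Weyl monotonicity applied to $X^*C^*CX\le X^*X$.

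\emph{What each buys.} Your argument is shorter and gives the stronger pointwise domination $\lambda_j^{\downarrow}(|CX|)\le\lambda_j^{\downarrow}(|X|)$ for every $j$. The paper's argument stays within the Cauchy--Schwarz/compression technique it uses elsewhere in that section, for instance for von Neumann's trace inequality. It also treats the left and right moduli symmetrically without passing through the trace norm.
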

\vskip 5pt\noindent

\begin{proof} 
A form of the polar decomposition of $X$ shows  that, thanks to Cauchy-Schwarz,
\begin{align*}
 |{\mathrm{Tr}}\, AXB|^2 &= \left|{\mathrm{Tr}}\, A|X^*|^{1/2}U|X|^{1/2}B\right|^2 \\
&\le  {\mathrm{Tr}}\, |X^*|^{1/2}A^*A|X^*|^{1/2}\,
 {\mathrm{Tr}}\, |X|^{1/2} BB^*|X|^{1/2}\\
&\le {\mathrm{Tr}}\, |X^*|^{1/2}E|X^*|^{1/2}\,
 {\mathrm{Tr}}\, |X|^{1/2} F|X|^{1/2},
\end{align*}
where $E$ is the support projection of $A^*A$ and $F$ is the support projection of $BB^*$.
From the min--max principle it follows that, for $j\le k\le n$ and $Z\in\bM_n$, $\lambda_j^{\downarrow}(E|Z|E)\le  \lambda_j^{\downarrow}(|Z|)$ since 
$E|Z|E$ corresponds to a compression of $|Z|$ on a $k$-dimensional subspace. Hence
$$
 |{\mathrm{Tr}}\, AXB|\le \sum_{j=1}^k\lambda_j^{\downarrow}(|X|).$$
If $B=P$ is  a projection corresponding to the $k$ largest eigenvalues of $|X|$, and $A=PU^*$, we have equality in the first maximum stated in the proposition, 
$$
 |{\mathrm{Tr}}\, AXB| = {\mathrm{Tr}}\, AXB=  {\mathrm{Tr}}\, PU^*XP = \sum_{j=1}^k\lambda_j^{\downarrow}(|X|).$$
So equality holds in the second maximum with $C=PU^*$.
Thus Proposition \ref{KFN2} is equivalent to the initial Definition \ref{defKFN} of the $k$-norms. 
\end{proof}

The proof shows that it suffices to consider partial isometries in $\Gamma_k$. So we can state the next proposition, where
 $I_k$ denotes the identity of $\bM_k$ and  ${\mathrm{Tr}}_k$
the trace on $\bM_k$.

\vskip 5pt\noindent
\begin{prop}\label{KFN3}  The Ky Fan $k$-norms on $\bM_n$,  $k=1,\ldots,n$ satisfy
$$
\| X\|_{(k)}:= \max_{V,W\in\Theta(k,n)}\left| {\mathrm{Tr}}_k\, V^*XW\right|.
$$
where $\Theta(k,n)$ stands for the set of $n\times k$ isometry matrices $J$, i.e., $J^*J=I_k$.
\end{prop}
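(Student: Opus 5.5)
The plan is to deduce Proposition~\ref{KFN3} from Proposition~\ref{KFN2} by showing that the two maxima coincide. The argument has two inequalities, one for each direction.

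\emph{Upper bound.} Given $V,W\in\Theta(k,n)$, set $A:=WV^*$ and $C:=VW^*$, both $n\times n$ matrices. By cyclicity of the trace (moving the $n\times k$ factor around),
$$
{\mathrm{Tr}}_k\, V^*XW={\mathrm{Tr}}\, XWV^*={\mathrm{Tr}}\, AX .
$$
Since $V^*V=W^*W=I_k$, we have $A^*A=VW^*WV^*=VV^*$, which is a rank $k$ projection. Hence $A$ is a partial isometry of rank $k$, so $A\in\Gamma_k$. Proposition~\ref{KFN2} then gives $|{\mathrm{Tr}}_k V^*XW|\le \|X\|_{(k)}$.

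\emph{Attainment.} I would use the extremal choice from the proof of Proposition~\ref{KFN2}. Write $X=U|X|$ with $U$ unitary, and choose an orthonormal family $e_1,\dots,e_k$ of eigenvectors of $|X|$ for its $k$ largest eigenvalues. Let $W$ be the $n\times k$ isometry whose columns are the $e_j$, so that $P=WW^*$, and let $V:=UW$, which is again an isometry. Then
$$
V^*XW=W^*U^*U|X|W=W^*|X|W=\mathrm{diag}\bigl(\lambda_1^{\downarrow}(|X|),\dots,\lambda_k^{\downarrow}(|X|)\bigr),
$$
whose trace is $\|X\|_{(k)}$. The two directions together give the equality.

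The only point requiring care is the translation between square rank-$\le k$ contractions and rectangular isometries, namely the cyclicity identity ${\mathrm{Tr}}_k(V^*XW)={\mathrm{Tr}}(XWV^*)$. That identity is routine, so I expect no real obstacle; the remark preceding the statement, that partial isometries suffice in $\Gamma_k$, is exactly what makes the factorization $C=VW^*$ available.
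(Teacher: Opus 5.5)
Your proof is correct and follows essentially the paper's route. For the upper bound you reduce to Proposition~\ref{KFN2} via the rank-$k$ partial isometry $WV^*$ and the one-matrix form $\max_{C\in\Gamma_k}|{\mathrm{Tr}}\,CX|$, whereas the paper pads $V,W$ with zero columns and uses the two-sided form $|{\mathrm{Tr}}\,AXB|$; attainment uses exactly the paper's choice of $W$ spanning the top-$k$ eigenvectors of $|X|$, with $V=UW$. (The auxiliary $C=VW^*$ and your closing remark about partial isometries are not needed, since $WV^*\in\Gamma_k$ is immediate.)
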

\vskip 5pt\noindent

\begin{proof} Note that with (${\mathrm{Tr}}={\mathrm{Tr}}_n $) we have  
$$
  {\mathrm{Tr}}_k V^*XW =  {\mathrm{Tr}}\, V_0^*XW_0
$$
where $W_0,V_0\in\bM_n$ are obtained from $W,V\in \bM_{n,k}$ by adding $n-k$ zero columns. Hence
$$
|  {\mathrm{Tr}}_k V^*XW|\le \| X\|_{(k)}.
$$
On the other hand, observe that
$$
{\mathrm{Tr}}_k V^*XW ={\mathrm{Tr}}_n XWV^*\, (= {\mathrm{Tr}} XWV^*)
$$
and choose $W\in\bM_{n,k}$ any isometry with range projection $P=WW^*$, a spectral projection of $|X|$ corresponding to the $k$ largest eigenvalues. Then pick $V=UW$, where $U$ still holds for the unitary factor of $X$. We then have
$$
{\mathrm{Tr}}_k V^*XW = {\mathrm{Tr}} XWW^*U^*=  {\mathrm{Tr}} XPU^*= \sum_{j=1}^k\lambda_j^{\downarrow}(|X|)$$
and the proof is complete.
\end{proof}

Since  $\|\cdot\|_2$ is a very nice norm, we may expect special results for it.
Recall that  $\bM_n^{\sa}$ is the Hermitian part of $\bM_n$ and $\bM_n^+$ is the positive semidefinite cone.

\vskip 5pt\noindent
\begin{defin} A pair $(A,B)$ in $\bM_n^{\sa}$ is  {\sl a monotone pair} if
$A=f(C)$ and $B=g(C)$ for two non-decreasing functions $f(t),\,g(t)$ and some $C\in\bM_n^{\sa}$. If $f(t)$ is non-decreasing and
$g(t)$ is non-increasing, then  $(A,B)$ is {\sl antimonotone}.
\end{defin}
\vskip 5pt\noindent

This notion leads to the following Hilbert--Schmidt norm comparison.

\vskip 5pt\noindent
\begin{thm}\label{thmonot1} Let $Z\in\bM_n$ be a normal matrix. If $(A,B)$ is a monotone pair in $\bM_n^+$ then, 
$$
\| AZB \|_2 \le \| ZAB \|_2.
$$
If $(A,B)$ is antimonotone, then the inequality reverses.
\end{thm}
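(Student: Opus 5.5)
Since $A=f(C)$ and $B=g(C)$, both are diagonal in one common orthonormal eigenbasis of $C$. I will work in that basis and compute both sides of Theorem~\ref{thmonot1} entrywise.

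\textbf{Setup.} Write $A=\mathrm{diag}(a_1,\dots,a_n)$ and $B=\mathrm{diag}(b_1,\dots,b_n)$ with $a_i=f(c_i)\ge 0$ and $b_i=g(c_i)\ge 0$. The vectors $(a_i)$ and $(b_i)$ are similarly ordered: $(a_i-a_j)(b_i-b_j)\ge 0$ for all $i,j$ in the monotone case, and $\le 0$ in the antimonotone case. Let $Z=(z_{ij})$. Then
$$
\|AZB\|_2^2=\sum_{i,j} a_i^2 b_j^2 |z_{ij}|^2,\qquad \|ZAB\|_2^2=\sum_{i,j} a_j^2 b_j^2 |z_{ij}|^2 .
$$

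\textbf{Using normality.} Normality gives $ZZ^*=Z^*Z$. Comparing diagonal entries yields the key identity
$$
\sum_j |z_{ij}|^2=\sum_j |z_{ji}|^2 \quad\text{for every } i .
$$
So the nonnegative matrix $M=(|z_{ij}|^2)$ has equal $i$-th row and $i$-th column sums. In particular, for any weights $w$,
$$
\sum_{i,j} w_i |z_{ij}|^2=\sum_{i,j} w_j |z_{ij}|^2 .
$$

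\textbf{Symmetrization.} Put $\alpha_i=a_i^2$ and $\beta_i=b_i^2$. These are again similarly (respectively oppositely) ordered, because squaring is increasing on $[0,\infty)$. We need the sign of
$$
D=\sum_{i,j}(\alpha_j\beta_j-\alpha_i\beta_j)\,m_{ij},\qquad m_{ij}=|z_{ij}|^2 .
$$
I will symmetrize using the row/column identity applied to the weights $\alpha\beta$, $\alpha$ and $\beta$. First, $\sum m_{ij}\alpha_j\beta_j=\sum m_{ij}\alpha_i\beta_i$, so
$$
D=\tfrac12\sum_{i,j} m_{ij}\,(\alpha_i\beta_i+\alpha_j\beta_j-2\alpha_i\beta_j).
$$
The term $\alpha_i\beta_j$ cannot be swapped directly, since $M$ is not symmetric. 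Instead, I aim to show
$$
\sum_{i,j} m_{ij}(\alpha_i\beta_i+\alpha_j\beta_j-\alpha_i\beta_j-\alpha_j\beta_i)
=\sum_{i,j} m_{ij}(\alpha_i-\alpha_j)(\beta_i-\beta_j)\ge 0,
$$
together with the fact that $\sum m_{ij}(\alpha_i\beta_j-\alpha_j\beta_i)$ vanishes. That vanishing does not follow from row and column sums alone.

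\textbf{Main obstacle.} The hard step is exactly this missing antisymmetric part. To get around it, I would decompose $M$ using normality more strongly. Write $Z=\sum_k \lambda_k u_ku_k^*$ with orthonormal eigenvectors $u_k$. Then
$$
m_{ij}=\Bigl|\sum_k \lambda_k u_k(i)\overline{u_k(j)}\Bigr|^2 ,
$$
which is still not symmetric. So the cleaner route is to show directly that $\|AZB\|_2^2\le \|ZAB\|_2^2$ reduces to a doubly-stochastic-type rearrangement argument. Set $r_i=\sum_j m_{ij}$. Since row sums equal column sums, $M$ is a nonnegative matrix whose normalized version is "balanced," and by a Birkhoff-type decomposition $M$ is a nonnegative combination of permutation matrices plus cycles. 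For a cyclic permutation $\sigma$, the inequality becomes
$$
\sum_i \alpha_i\beta_{\sigma(i)}\le \sum_i \alpha_i\beta_i ,
$$
which is the rearrangement inequality for similarly ordered sequences. The balanced matrix decomposes into weighted cycles (circulations), and each cycle satisfies this inequality, so $D\ge 0$ in the monotone case.

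\textbf{Antimonotone case.} Here the rearrangement inequality reverses, so the inequality in Theorem~\ref{thmonot1} reverses as well. I expect the circulation decomposition step to be the one requiring care.
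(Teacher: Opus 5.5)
Your final argument is correct and takes a genuinely different route from the paper. The symmetrization paragraph and the eigenvector expansion are dead ends and can simply be deleted.

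\textbf{The paper's route.} It passes to the trace form $\Tr Z^*AZB\le \Tr Z^*ZAB$, with $A^2,B^2$ in place of $A,B$. It writes both as nonnegative combinations of nested spectral projections of $C$ (a layer decomposition). This reduces everything to a pair of comparable projections $E,F$, where the claim is immediate. If $E\le F$, then $\Tr Z^*EZF\le \Tr Z^*EZ=\Tr ZZ^*E=\Tr Z^*ZEF$. If $F\le E$, then $\Tr Z^*EZF=\Tr FZ^*EZF\le \Tr FZ^*ZF=\Tr Z^*ZEF$.

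\textbf{Your route.} You fix a common eigenbasis and use normality only through the equality of row and column sums of $M=(|z_{ij}|^2)$. You then reduce $D\ge 0$ to the scalar rearrangement inequality via a cycle decomposition of $M$.

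\textbf{The step you flag.} This is the standard flow-decomposition theorem: follow positive entries until an index repeats, subtract the smallest weight on that cycle, and induct on the number of positive entries. It can also be replaced by Birkhoff's theorem.
\begin{itemize}
\item Diagonal entries of $M$ contribute $m_{ii}(\alpha_i\beta_i-\alpha_i\beta_i)=0$ to $D$.
\item Hence $D(M)=D(M')$ for $M'=M+\mathrm{diag}(R-r_1,\dots,R-r_n)$ with $R=\max_i r_i$, and $M'/R$ is doubly stochastic.
\item Each permutation matrix in its Birkhoff decomposition contributes $\sum_i\alpha_i\beta_i-\sum_i\alpha_i\beta_{\sigma(i)}\ge 0$, with the sign reversed in the antimonotone case.
\end{itemize}

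\textbf{What each approach buys.} The paper's proof is shorter and basis-free. It only needs $\Tr(ZZ^*-Z^*Z)E=0$ for the spectral projections $E$ of $C$, and the layer decomposition works equally well with spectral measures. Your proof exposes the classical content of the theorem: it is the rearrangement (Chebyshev) inequality averaged against a balanced nonnegative weight matrix. It also ties the statement directly to the Birkhoff/majorization viewpoint of Section~\ref{s:majorization}. Both arguments use normality only in this weak diagonal form.
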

\vskip 5pt\noindent

Since $(A,B)$ is a monotone (antimonotone) pair in $\bM_n^+$ if and only if so is $(A^{1/2}, B^{1/2})$, squaring both sides, the theorem is equivalent to the trace inequalities in the next corollary. 

\vskip 5pt\noindent
\begin{cor}\label{cormonot} Let $Z\in\bM_n$ be a normal matrix. If $(A,B)$ is a monotone pair in $\bM_n^{\sa}$ then, 
$$
{\mathrm{Tr}\,} Z^*AZB \le {\mathrm{Tr}\,} Z^*ZAB. 
$$
If $(A,B)$ is antimonotone, then the inequality reverses.
\end{cor}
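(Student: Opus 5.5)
We prove the trace inequality directly, by diagonalizing the common matrix $C$ and reducing to a scalar rearrangement-type inequality. Write $C=\sum_i c_i P_i$ with orthonormal eigenvectors $e_1,\dots,e_n$ and $c_1\ge\cdots\ge c_n$. Then $A$ and $B$ are diagonal in this basis, with entries $a_i=f(c_i)$ and $b_i=g(c_i)$. In the monotone case both sequences $(a_i)$ and $(b_i)$ are nonincreasing. Set $m_{ij}:=|\langle e_i, Ze_j\rangle|^2\ge 0$. A direct computation gives
$$
{\mathrm{Tr}\,} Z^*AZB=\sum_{i,j} a_i b_j\, m_{ij},\qquad {\mathrm{Tr}\,} Z^*ZAB=\sum_{j} a_j b_j\,(Z^*Z)_{jj}=\sum_j a_jb_j\sum_i m_{ij}.
$$

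Normality enters through the row and column sums of $M=(m_{ij})$. The column sums are $\sum_i m_{ij}=\|Ze_j\|^2$, and the row sums are $\sum_j m_{ij}=\|Z^*e_i\|^2$. Since $Z$ is normal, $\|Ze_i\|=\|Z^*e_i\|$, so row $i$ and column $i$ of $M$ have the same sum $r_i$. Hence
$$
{\mathrm{Tr}\,} Z^*ZAB=\sum_i a_ib_i r_i=\tfrac12\sum_{i,j}(a_ib_i+a_jb_j)m_{ij},
$$
where we used $\sum_{i,j}a_ib_im_{ij}=\sum_i a_ib_i r_i$ (row sums) and $\sum_{i,j}a_jb_jm_{ij}=\sum_j a_jb_j r_j$ (column sums). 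The other side can be symmetrized in the same way: ${\mathrm{Tr}\,} Z^*AZB=\tfrac12\sum_{i,j}(a_ib_j+a_jb_i)m_{ij}$, obtained by swapping $i$ and $j$. Subtracting,
$$
{\mathrm{Tr}\,} Z^*ZAB-{\mathrm{Tr}\,} Z^*AZB=\tfrac12\sum_{i,j}(a_i-a_j)(b_i-b_j)\,m_{ij}.
$$

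In the monotone case $(a_i-a_j)(b_i-b_j)\ge 0$, because $f$ and $g$ are both nondecreasing in $c$. Together with $m_{ij}\ge0$, this gives the claimed inequality. In the antimonotone case each factor $(a_i-a_j)(b_i-b_j)$ is $\le 0$, so the inequality reverses. Neither case uses positivity of $A$ and $B$, so the statement holds on all of $\bM_n^{\sa}$.

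The main subtlety is the symmetrization: without normality the row and column sums of $M$ differ, and the identity above fails. The key point is therefore recognizing that normality is exactly the condition $\|Ze_i\|=\|Z^*e_i\|$ for every $i$, which makes the symmetrization possible. Finally, Theorem \ref{thmonot1} follows as the text indicates, by applying this to $(A^2,B^2)$ (still a monotone pair in $\bM_n^{\sa}$, since $t\mapsto t^2$ is nondecreasing on $[0,\infty)$), because $\|AZB\|_2^2={\mathrm{Tr}\,}Z^*A^2ZB^2$ and $\|ZAB\|_2^2={\mathrm{Tr}\,}Z^*ZAB^2A={\mathrm{Tr}\,}Z^*ZA^2B^2$, using that $A$ and $B$ commute.
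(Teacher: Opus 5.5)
\textbf{There is a genuine gap: your difference identity is false for general normal $Z$.} Your formulas for the two traces are correct. So is the observation that normality gives row $i$ and column $i$ of $M=(m_{ij})$ the same sum. The error is in symmetrizing ${\mathrm{Tr}\,}Z^*AZB$. Swapping $i$ and $j$ in $\sum_{i,j}a_ib_jm_{ij}$ gives $\sum_{i,j}a_jb_i\,m_{ji}$, not $\sum_{i,j}a_jb_i\,m_{ij}$. Your formula therefore needs $m_{ij}=m_{ji}$, i.e.\ $|\langle e_i,Ze_j\rangle|=|\langle e_j,Ze_i\rangle|$. This holds for Hermitian $Z$, but normality does not give it.

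Here is a counterexample. Take the cyclic shift $Ze_1=e_2$, $Ze_2=e_3$, $Ze_3=e_1$ in $\bM_3$, which is unitary and hence normal. Take the monotone pair $A=\mathrm{diag}(1,0,0)$, $B=\mathrm{diag}(1,1,0)$, both nondecreasing functions of $C=\mathrm{diag}(3,2,1)$. Then $m_{21}=m_{32}=m_{13}=1$ and all other $m_{ij}=0$. So ${\mathrm{Tr}\,}Z^*AZB=a_2b_1+a_3b_2+a_1b_3=0$, while your symmetrized expression gives $1/2$. Likewise ${\mathrm{Tr}\,}Z^*ZAB-{\mathrm{Tr}\,}Z^*AZB=1$, while your identity gives $1/2$. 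In general the true difference is your sum plus $\tfrac12\sum_{i,j}(a_jb_i-a_ib_j)m_{ij}$, which has no sign. The termwise sign argument therefore proves nothing as it stands.

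\textbf{How to repair it.} You must exploit only the balance of $M$, and use monotonicity globally rather than termwise. Put $D(a,b):=\sum_i a_ib_ir_i-\sum_{i,j}a_ib_jm_{ij}$.
\begin{enumerate}
\item $D$ is bilinear. It vanishes when $b$ is constant (by the row sums, which is where normality enters) and when $a$ is constant (by the column sums).
\item By Abel summation, $a=a_n\mathbf{1}+\sum_{k<n}(a_k-a_{k+1})\mathbf{1}_{\{1,\dots,k\}}$, and similarly for $b$. In the monotone case every product $(a_k-a_{k+1})(b_l-b_{l+1})$ is $\ge0$; in the antimonotone case every such product is $\le0$.
\item It therefore suffices to show $D(\mathbf{1}_{\{1,\dots,k\}},\mathbf{1}_{\{1,\dots,l\}})=\sum_{i\le\min(k,l)}r_i-\sum_{i\le k,\,j\le l}m_{ij}\ge0$. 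This follows by enlarging the double sum to full rows when $k\le l$, and to full columns when $l\le k$.
\end{enumerate}
This is exactly the paper's proof written in coordinates. The paper shifts to $A,B\ge0$ and uses the layer decomposition to reduce to nested spectral projections $E,F$ of $C$. If $E\le F$, then ${\mathrm{Tr}\,}Z^*EZF\le{\mathrm{Tr}\,}Z^*EZ={\mathrm{Tr}\,}ZZ^*E={\mathrm{Tr}\,}Z^*ZEF$, using normality. If $F\le E$, then ${\mathrm{Tr}\,}Z^*EZF\le{\mathrm{Tr}\,}Z^*ZF={\mathrm{Tr}\,}Z^*ZEF$. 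Your closing derivation of Theorem~\ref{thmonot1} via $(A^2,B^2)$ is fine once the corollary is established.
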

\vskip 5pt\noindent

\begin{proof}[Proof of Corollary~\ref{cormonot}] Replacing $A,B$ by $A+kI, B+kI$ for $k$ large enough, we may assume $A,B\in\bM_n^+$. It suffices to prove the monotone case, the antimonotone case follows by replacing $B$ with $cI-B$ for $c>0$ large enough. By layer decomposition of $C$, hence of $A$ and $B$, it suffices to show that 
$$
{\mathrm{Tr}\,} Z^*EZF \le {\mathrm{Tr}\,} Z^*ZEF
$$
for every pair of projections $E$ and $F$ such that $E\le F$ or $F\le E$. 

Suppose $E\le F$. Then, since $Z^*Z=ZZ^*$,
 $${\mathrm{Tr}\,} Z^*EZF\le {\mathrm{Tr}\,} Z^*EZ={\mathrm{Tr}\,} ZZ^*E= {\mathrm{Tr}\,} Z^*ZEF.$$
Suppose $F\le E$. Then,
${\mathrm{Tr}\,} Z^*EZF\le {\mathrm{Tr}\,} Z^*ZF= {\mathrm{Tr}\,} Z^*ZEF.$
\end{proof}

 Letting $Z=|h\rangle\langle h|$ be a rank one projection, Corollary \ref{cormonot} says that, for a monotone pair $(A,B)$ in $\bM_n$,
$$
\langle h, Ah\rangle \langle h, Bh\rangle \le \langle h, ABh\rangle.
$$
Computing the scalar products in a basis diagonalizing $A,B$ we see that we obtain 
Chebyshev's inequality for pairs of non-decreasing continuous functions on $(0,1)$, 
$$
\int_0^1 f(t)\, {\mathrm{d}}t\ \int_0^1  g(t) \,{\mathrm{d}}t\le \int_0^1  f(t)g(t) \, {\mathrm{d}}t.
$$
Thus Theorem \ref{thmonot1} is a matrix Chebyshev inequality. Another interesting special case appears by
letting $Z$ be a unitary matrix:  We may then obtain von Neumann trace inequalities \cite{Neu37}: 

\vskip 5pt\noindent
\begin{lem}\label{lemVNRI} Let $X,Y\in\bM_n$. Then 
$$
\left|{\mathrm{Tr}\,} XY \right| \le \sum_{j=1}^n \lambda_j^{\downarrow}(|X|) \lambda_j^{\downarrow}(|Y|) 
$$
\end{lem}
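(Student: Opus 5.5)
The plan is to reduce the statement to Corollary~\ref{cormonot} with $Z$ a unitary matrix, as the text preceding the lemma suggests. First, use polar decompositions to bring the problem to positive matrices. Write $X=U_1|X|$ and $Y=|Y^*|U_2$ with $U_1,U_2$ unitary. Then
\[
{\mathrm{Tr}\,} XY={\mathrm{Tr}\,} U_1|X||Y^*|U_2={\mathrm{Tr}\,} W^*|X|W\,|Y^*|\cdot
\]
after cyclic rearrangement. More precisely, ${\mathrm{Tr}\,}XY={\mathrm{Tr}\,}(U_2U_1)|X||Y^*|$, and $|Y^*|$ has the same eigenvalues as $|Y|$. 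So it suffices to show that for positive $P,Q$ and a unitary $W$,
\[
|{\mathrm{Tr}\,} WPQ|\le \sum_j \lambda_j^{\downarrow}(P)\lambda_j^{\downarrow}(Q).
\]

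Second, I would split with Cauchy--Schwarz, writing $WPQ=(WP^{1/2})(P^{1/2}Q^{1/2})Q^{1/2}$. A cleaner route is to note first that $|{\mathrm{Tr}\,}WPQ|=|{\mathrm{Tr}\,}(Q^{1/2}WP^{1/2})(P^{1/2}Q^{1/2})|$. Cauchy--Schwarz for $\langle\cdot,\cdot\rangle$ then gives
\[
|{\mathrm{Tr}\,}WPQ|^2\le {\mathrm{Tr}\,}(W^*QWP)\,{\mathrm{Tr}\,}(PQ).
\]
Each factor has the form ${\mathrm{Tr}\,}Z^*AZB$ with $Z$ unitary and $A,B\ge0$. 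It therefore suffices to prove ${\mathrm{Tr}\,}V^*QVP\le \sum_j\lambda_j^{\downarrow}(P)\lambda_j^{\downarrow}(Q)$ for every unitary $V$. Taking $V=W$ and $V=I$ and multiplying the two bounds then gives the claim.

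Third, to bound ${\mathrm{Tr}\,}V^*QVP$, choose a unitary $Z$ that diagonalizes both sides compatibly. Let $P=R\,P^{\downarrow}R^*$, and let $Q'=V^*QV=S\,Q^{\downarrow}S^*$. Set $A=RQ^{\downarrow}R^*$, so that $(A,P)$ is a monotone pair: both are nondecreasing functions of $C=RDR^*$ with $D$ diagonal with decreasing entries. Let $Z=SR^*$, which is unitary and hence normal. Then $Z A Z^*=Q'$, and
\[
{\mathrm{Tr}\,}Q'P={\mathrm{Tr}\,}ZAZ^*P.
\]
Applying Corollary~\ref{cormonot} to the normal matrix $Z^*$ gives ${\mathrm{Tr}\,}ZAZ^*P\le{\mathrm{Tr}\,}ZZ^*AP={\mathrm{Tr}\,}AP=\sum_j\lambda_j^{\downarrow}(Q)\lambda_j^{\downarrow}(P)$.

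The main obstacle is the bookkeeping in this last step. One has to make sure $A$ and $P$ really form a monotone pair: they must be simultaneously diagonal with eigenvalues in the same order, and ties can be handled by a common decreasing $D$ with distinct entries. One also has to check that the form ${\mathrm{Tr}\,}Z^*AZB$ in Corollary~\ref{cormonot} matches after replacing $Z$ by $Z^*$. Alternatively, the Cauchy--Schwarz step can be avoided entirely by applying Corollary~\ref{cormonot} directly to the real part, after rotating ${\mathrm{Tr}\,}XY$ by a phase $e^{i\theta}$. However, that would require handling ${\mathrm{Re}\,}{\mathrm{Tr}}$ of a non-Hermitian expression, so I prefer the route above.
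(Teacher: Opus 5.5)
Your proof is correct and follows essentially the same route as the paper. Polar decompositions plus the Cauchy--Schwarz inequality for $\langle X,Y\rangle={\mathrm{Tr}}\,X^*Y$ reduce the claim to traces of products of two positive matrices, and these are bounded by conjugating one factor into a monotone pair with the other and applying Corollary~\ref{cormonot} with a unitary (the paper factors $X=|X^*|^{1/2}U|X|^{1/2}$ instead of writing ${\mathrm{Tr}\,}XY={\mathrm{Tr}\,}WPQ$, but this difference is cosmetic). You should delete the stray first display ${\mathrm{Tr}\,}W^*|X|W\,|Y^*|$, which is not equal to ${\mathrm{Tr}\,}XY$ in general; the identity you give right after it, ${\mathrm{Tr}\,}XY={\mathrm{Tr}\,}(U_2U_1)|X||Y^*|$, is the correct one.
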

\vskip 5pt\noindent

\begin{proof}
First
if $X$ and $Y$ are both positive then, for some unitary $Z$,
$ZXZ^*$ and $Y$ form a monotone pair, and Corollary \ref{cormonot} yields
$$
{\mathrm{Tr}\,} XY ={\mathrm{Tr}\,}(ZXZ^*)ZYZ^*\le {\mathrm{Tr}\,} ZXZ^*Y=\sum_{j=1}^n \lambda_j^{\downarrow}(|X|) \lambda_j^{\downarrow}(|Y|).
$$
For  general $X$ and $Y$,
 we consider  polar decompositions $X=U|X|$, $Y=V|Y|$ and we note that
\begin{align*}
\left| {\rm Tr}\, XY\right| &= \left| {\rm Tr}\, |X^*|^{1/2}U|X|^{1/2}V|Y|\right| \\
&= \left| {\rm Tr}\, |Y|^{1/2}|X^*|^{1/2}U|X|^{1/2}V|Y|^{1/2}\right| \\
&\le \{{\rm Tr}\, |Y||X^*|\}^{1/2} \{{\rm Tr}\, |X|V|Y|V^*\}^{1/2}
\end{align*}
by using the Cauchy-Schwarz inequality $\left| {\rm Tr}\,
A^*B\right|\le \{{\rm Tr}\, A^*A\}^{1/2} \{{\rm Tr}\,
B^*B\}^{1/2}$. Hence the general case follows
from the positive one.
\end{proof}

The Hermitian version follows from the same rearrangement argument.

\vskip 5pt\noindent
\begin{lem}\label{lemVN2} Let $A,B\in\bM_n^{\sa}$. Then 
$$
{\mathrm{Tr}\,} AB \le \sum_{j=1}^n \lambda_j^{\downarrow}(A) \lambda_j^{\downarrow}(B).
$$
\end{lem}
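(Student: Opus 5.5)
The plan is to reduce to the positive case and then run the same rearrangement argument as in the proof of Lemma~\ref{lemVNRI}. Both sides of the inequality behave identically under a shift by a scalar. If we replace $A$ by $A+aI$ and $B$ by $B+bI$, then
\[
{\mathrm{Tr}\,}(A+aI)(B+bI)={\mathrm{Tr}\,}AB+b\,{\mathrm{Tr}\,}A+a\,{\mathrm{Tr}\,}B+nab .
\]
Since $\lambda_j^{\downarrow}(A+aI)=\lambda_j^{\downarrow}(A)+a$, and likewise for $B$, the right-hand side changes by the same quantity
\[
b\sum_j\lambda_j^{\downarrow}(A)+a\sum_j\lambda_j^{\downarrow}(B)+nab .
\]
So we may take $a,b$ large and assume $A,B\in\bM_n^+$.

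Next, choose a unitary $Z$ such that $ZAZ^*$ and $B$ are simultaneously diagonal in an orthonormal eigenbasis of $B$, both with eigenvalues in nonincreasing order along that basis. Concretely, if $B=\sum_j \lambda_j^{\downarrow}(B)\,|e_j\rangle\langle e_j|$ and $A=\sum_j \lambda_j^{\downarrow}(A)\,|f_j\rangle\langle f_j|$, let $Z$ map $f_j$ to $e_j$. Then $ZAZ^*=f(C)$ and $B=g(C)$ for $C=\sum_j (n-j)\,|e_j\rangle\langle e_j|$ and suitable nondecreasing functions $f,g$ interpolating the ordered eigenvalues. Hence $(ZAZ^*,B)$ is a monotone pair.

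Now apply Corollary~\ref{cormonot} to the normal (unitary) matrix $Z$ and the monotone pair $(ZAZ^*,B)$:
\[
{\mathrm{Tr}\,}AB={\mathrm{Tr}\,}Z^*(ZAZ^*)ZB\le {\mathrm{Tr}\,}Z^*Z(ZAZ^*)B={\mathrm{Tr}\,}(ZAZ^*)B=\sum_{j=1}^n\lambda_j^{\downarrow}(A)\lambda_j^{\downarrow}(B).
\]
The last equality holds because both factors are diagonal in the basis $(e_j)$ with aligned ordered eigenvalues.

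The only slightly delicate point is the construction of $C$, $f$, $g$ when eigenvalues are repeated. This is harmless: the functions need only be nondecreasing on the finite set $\{0,\dots,n-1\}$, and monotonicity there is exactly the ordering of the eigenvalues, ties included. The rest is bookkeeping. Alternatively, one could avoid the shift by invoking Corollary~\ref{cormonot} directly, since it is already stated for Hermitian pairs. The shift argument is nevertheless included so that the identification of the right-hand side is transparent.
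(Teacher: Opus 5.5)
Your proof is correct and is essentially the paper's argument: the paper simply says to repeat the positive-case rearrangement from Lemma~\ref{lemVNRI}, i.e., conjugate by a unitary $Z$ so that $(ZAZ^*,B)$ is a monotone pair and apply Corollary~\ref{cormonot} with the normal matrix $Z$. Your preliminary scalar shift to $\bM_n^+$ is valid but unnecessary, since Corollary~\ref{cormonot} is already stated for monotone pairs in $\bM_n^{\sa}$ and performs that same shift inside its own proof.
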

\vskip 5pt\noindent

\begin{proof} Exactly the same proof as the previous one in the case of $X,Y\in\bM_n^+$.
\end{proof}

The operator norm is denoted by $\|\cdot\|_{\infty}$ as it is the limit of Schatten $p$-norms, $p\to\infty$, given in the next definition.

\vskip 5pt\noindent
\begin{defin}\label{defSpN} The Schatten $p$-norms on $\bM_n$, $1\le p<\infty$, are the symmetric norms defined by
$$
\| A\|_p:= \left\{{\mathrm{Tr}} |A|^p\right\}^{1/p} =\left\{ \sum_{j=1}^n \lambda_j^{\downarrow}(|A|)^p\right\}^{1/p}.
$$
\end{defin}
\vskip 5pt\noindent

We can prove that the Schatten $p$-norms are indeed symmetric norms in two different ways. First we may note the matrix Hölder inequality:

\vskip 5pt\noindent
\begin{prop} Let $X,Y\in \bM_n$ and let $1\le p,q\le\infty$ be conjugate, i.e., $p^{-1}+q^{-1}=1$. Then
$$
|{\mathrm{Tr}} X^*Y| \le \| X\|_p\|Y\|_q
$$
\end{prop}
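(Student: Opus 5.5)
The plan is to reduce the matrix Hölder inequality to the scalar Hölder inequality, using von Neumann's trace inequality (Lemma~\ref{lemVNRI}) as the bridge.

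First apply Lemma~\ref{lemVNRI} to the pair $X^*$ and $Y$. Since $|X^*|$ and $|X|$ have the same eigenvalues, we have $\lambda_j^{\downarrow}(|X^*|)=\lambda_j^{\downarrow}(|X|)$. This gives
$$
|{\mathrm{Tr}}\, X^*Y|\le \sum_{j=1}^n \lambda_j^{\downarrow}(|X|)\,\lambda_j^{\downarrow}(|Y|).
$$
The fact that $|X^*|\simeq|X|$ can be read off the polar decomposition $X=U|X|$ with $U$ unitary, which gives $|X^*|=U|X|U^*$.

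Next apply the classical Hölder inequality for nonnegative sequences in $\bR^n$, with the conjugate exponents $p,q$, to the right-hand side:
$$
\sum_{j=1}^n a_jb_j\le \Bigl(\sum_{j=1}^n a_j^p\Bigr)^{1/p}\Bigl(\sum_{j=1}^n b_j^q\Bigr)^{1/q},
\qquad a_j=\lambda_j^{\downarrow}(|X|),\quad b_j=\lambda_j^{\downarrow}(|Y|).
$$
By Definition~\ref{defSpN}, the right-hand side is exactly $\|X\|_p\|Y\|_q$.

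The endpoint cases $p=1$, $q=\infty$ (and symmetrically $p=\infty$, $q=1$) need a separate check. There the scalar bound reads $\sum_j a_jb_j\le (\max_j b_j)\sum_j a_j$. This matches $\|X\|_1\|Y\|_\infty$, because $\lambda_1^{\downarrow}(|Y|)$ is the operator norm.

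There is essentially no obstacle: the only real input is von Neumann's inequality, which is already available. The one point to check is that Lemma~\ref{lemVNRI} applies to $X^*Y$ with the singular values of $X$, which follows from $|X^*|\simeq|X|$.

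If one also wants to conclude that $\|\cdot\|_p$ is a norm, that step is separate from the proposition itself. One would use the duality $\|Y\|_q=\max_{\|X\|_p\le1}|{\mathrm{Tr}}\, X^*Y|$. Equality in the bound above is attained by taking $X=U|Y|^{q-1}$ (normalized), where $Y=U|Y|$. The triangle inequality then follows, since a supremum of seminorms is a seminorm.
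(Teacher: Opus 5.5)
Your proposal is correct and follows the paper's route: the paper also obtains the matrix Hölder inequality from the classical scalar one via von Neumann's trace inequality (Lemma~\ref{lemVNRI}). Your observation $|X^*|=U|X|U^*$ correctly supplies the one detail the paper leaves implicit, and your endpoint cases and the optional duality remark with $X=U|Y|^{q-1}$ (using $(q-1)p=q$) are also fine.
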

\vskip 5pt\noindent

The matrix Hölder inequality follows from the classical one by using Lemma \ref{lemVNRI}. As in the classical case we then infer that $\|\cdot\|_p$ and $\|\cdot\|_q$ are dual norms. Thus we have the matrix Minkowski inequality:

\vskip 5pt\noindent
\begin{prop}\label{Sptriang} Let $A,B\in \bM_n$ and  $1\le p\le\infty$. Then
$$
 \| A+B\|_p\le \| A\|_p +\|B\|_p.
$$
\end{prop}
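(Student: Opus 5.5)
The plan follows the route the paper has just announced: Minkowski via duality with the matrix Hölder inequality. The case $p=\infty$ is the triangle inequality for the operator norm, which is elementary, and $p=1$ was already settled by Thompson's inequality. So we fix $1<p<\infty$ and let $q$ be its conjugate exponent.

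The key step is the variational formula
\[
\|X\|_p=\max\{|\mathrm{Tr}\, Y^*X| : \|Y\|_q\le 1\}.
\]
The upper bound "$\le$" is exactly the matrix Hölder inequality. For equality, take a polar decomposition $X=U|X|$ with $U$ unitary and $X\neq 0$, and set
\[
Y:=U|X|^{p-1}/\|X\|_p^{p-1}.
\]
Then $\mathrm{Tr}\, Y^*X=\mathrm{Tr}\,|X|^{p-1}U^*U|X|/\|X\|_p^{p-1}=\|X\|_p^p/\|X\|_p^{p-1}=\|X\|_p$. Since $|Y|=|X|^{p-1}/\|X\|_p^{p-1}$, we get $\|Y\|_q^q=\mathrm{Tr}\,|X|^{(p-1)q}/\|X\|_p^{(p-1)q}=\|X\|_p^p/\|X\|_p^p=1$, using $(p-1)q=p$.

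With this formula the triangle inequality is immediate. Choose $Y$ with $\|Y\|_q=1$ attaining the maximum for $X=A+B$; then
\[
\|A+B\|_p=|\mathrm{Tr}\, Y^*(A+B)|\le |\mathrm{Tr}\, Y^*A|+|\mathrm{Tr}\, Y^*B|\le \|A\|_p+\|B\|_p,
\]
where the last step applies Hölder twice.

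The only genuine content is the Hölder inequality itself, which the paper says follows from Lemma~\ref{lemVNRI}. If it needs justification, apply Lemma~\ref{lemVNRI} to get
\[
|\mathrm{Tr}\, X^*Y|\le\sum_j \lambda_j^{\downarrow}(|X|)\lambda_j^{\downarrow}(|Y|),
\]
and then the classical Hölder inequality on these sequences; here one uses that $|X^*|$ and $|X|$ have the same eigenvalues. This is the step I would regard as the main point; the rest is the same bookkeeping as in the commutative case.

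An alternative that stays closer to the paper's Thompson theme is also available. Thompson's inequality gives $|A+B|\le U|A|U^*+V|B|V^*$. Since $t\mapsto t^p$ is increasing, the eigenvalue order shows that $\mathrm{Tr}\,|A+B|^p\le \mathrm{Tr}\,(U|A|U^*+V|B|V^*)^p$. It then remains to prove the Minkowski inequality for positive matrices, which can be done by the same duality argument restricted to positive $Y$. I would present the direct duality proof as the main argument.
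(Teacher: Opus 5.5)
Your proof is correct, but its main argument is not the one the paper gives after the statement. You prove Minkowski by duality: matrix H\"older (via Lemma~\ref{lemVNRI} and the classical H\"older inequality) together with the explicit norming element $Y=U|X|^{p-1}/\|X\|_p^{p-1}$ for $X=A+B\neq 0$. The paper only gestures at this route in one sentence before the statement (``$\|\cdot\|_p$ and $\|\cdot\|_q$ are dual norms''). It is the exact $p$-analogue of the remark following Definition~\ref{defHSTN}, where the norming element for the trace norm is a unitary. In particular, your argument with $Y=U$ and $q=\infty$ already covers $p=1$ without Thompson.

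The paper's actual proof instead shows that the unit ball is convex. Assuming $\|A\|_p=\|B\|_p=1$, Thompson's inequality applied to $tA$ and $(1-t)B$ gives $\mathrm{Tr}\,|tA+(1-t)B|^p\le \mathrm{Tr}\,\bigl(tU|A|U^*+(1-t)V|B|V^*\bigr)^p$. The trace convexity inequality \eqref{vNconv} for $s\mapsto s^p$ bounds this by $t\,\mathrm{Tr}\,|A|^p+(1-t)\,\mathrm{Tr}\,|B|^p=1$. Rescaling with $t=\|A\|_p/(\|A\|_p+\|B\|_p)$ finishes the proof.

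What each route buys: yours costs the von Neumann trace inequality, but it yields H\"older and the duality formula as by-products and never uses Thompson. The paper's route needs neither H\"older nor a norming element. The same template, applied to the Hermitian dilations $A_0,B_0$, gives the Thompson-free proof of Remark~\ref{originalproof}.

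Your ``alternative'' begins exactly like the paper's proof. But once you close the positive case by duality, the Thompson step becomes redundant, since duality already handles arbitrary matrices. In the paper, Thompson is used precisely to reduce to positive matrices, so that \eqref{vNconv}, a statement about Hermitian matrices, can finish the job.
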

\vskip 5pt\noindent

We can give a proof of this triangle inequality by using Thompson's inequality. It is our second proof that the Schatten $p$-norms are indeed norms.

\begin{proof} The case $p=\infty$ is well-known, so let $p<\infty.$ Suppose first that $\|A\|_p=\|B\|_p=1$ and let
$0\le t\le 1$.
Then, there exist two unitaries $U,V$ such that
$$
\| tA+(1-t)B\|_p^p = {\mathrm{Tr}} |tA+(1-t)B|^p \le {\mathrm{Tr}} (tU|A|U^*+(1-t)V|B|V^*)^p.
$$
so that, using the convexity inequality \eqref{vNconv},
$$
\| tA+(1-t)B\|_p^p \le  {\mathrm{Tr}} (tU|A|^pU^*+(1-t)V|B|^pV^*)=1
$$
and so $\| tA+(1-t)B\|_p\le 1$.
Now for the general case, $A,B\neq 0$, apply the previous case with
$$
\frac{1}{\| A\|_p}A, \quad \frac{1}{\| B\|_p}B, \quad t=\frac{\|A\|_p}{\|A\|_p+\|B\|_p}.
$$
This yields $\| A+B\|_p \le  \| A\|_p +\|B\|_p$. 
\end{proof}

\vskip 5pt\noindent
\begin{rem}\label{originalproof} A third proof of Proposition \ref{Sptriang} is possible by using only the convexity inequality \eqref{vNconv}. To do so, consider the convex function $t\mapsto|t|^p$ and the Hermitian operators
$$
A_0=\begin{pmatrix} 0&A \\
A^*&0
\end{pmatrix}, \qquad
B_0=\begin{pmatrix} 0&B \\
B^*&0
\end{pmatrix}.
$$
Arguing as in the second part of the previous proof with $A_0$, $B_0$ and $t\mapsto|t|^p$ yields Proposition \ref{Sptriang}.
\end{rem}
\vskip 5pt\noindent

 Proposition \ref{Sptriang} is a special case of the next proposition (with the trace norm).

\vskip 5pt\noindent
\begin{prop}\label{normpconv} Let $\|\cdot\|$ be a symmetric norm on $\bM_n$ and  let $p>1$. Then, 
$$
X\mapsto \| |X|^p\|^{1/p}
$$
is a symmetric norm on $\bM_n$.
\end{prop}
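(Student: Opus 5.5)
We will mimic the second proof of Proposition~\ref{Sptriang}, with the trace replaced by the symmetric norm $\|\cdot\|$. Set $N(X):=\| |X|^p\|^{1/p}$. Homogeneity and unitary invariance are immediate: $|\alpha X|^p=|\alpha|^p|X|^p$, and $|UXV|^p=V^*|X|^pV$ for unitaries $U,V$. Definiteness is inherited from $\|\cdot\|$. It therefore remains to prove the triangle inequality. By the same normalization as in the proof of Proposition~\ref{Sptriang}, it suffices to show that $N(tA+(1-t)B)\le 1$ whenever $N(A)=N(B)=1$ and $0\le t\le 1$.

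\textbf{Step 1.} By Thompson's inequality (Theorem~\ref{thT}), there are unitaries $U,V$ with
\[
|tA+(1-t)B|\le P:=tU|A|U^*+(1-t)V|B|V^*.
\]
Since $s\mapsto s^p$ is nondecreasing, the eigenvalue order gives $\lambda_j^{\downarrow}(|tA+(1-t)B|^p)\le\lambda_j^{\downarrow}(P^p)$ for all $j$. Hence $|tA+(1-t)B|^p\le WP^pW^*$ for some unitary $W$, and monotonicity of symmetric norms on $\bM_n^+$ yields
\[
N(tA+(1-t)B)^p\le\|P^p\|.
\]

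\textbf{Step 2 (the main obstacle).} We need an operator-level convexity statement for $s\mapsto s^p$, stronger than the trace inequality \eqref{vNconv}. Namely, for positive $X,Y$ there should exist unitaries $U_1,V_1$ with
\[
\Bigl(tX+(1-t)Y\Bigr)^p\le tU_1X^pU_1^*+(1-t)V_1Y^pV_1^*,
\]
or at least this inequality after taking symmetric norms. This is the convex counterpart of the Aujla--Bourin inequality \eqref{eq:Auj-Bou}, due to Aujla and Silva / Bourin. I would first try to prove the weaker norm statement directly. It suffices to treat the Ky Fan norms, which is legitimate once Section~\ref{s:majorization} gives the Fan dominance principle.

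For a Ky Fan norm, let $E$ be the spectral projection of $Z:=tX+(1-t)Y$ corresponding to its $k$ largest eigenvalues. Then
\[
\|Z^p\|_{(k)}=\Tr\,(EZE)^p=\Tr\,\bigl(tEXE+(1-t)EYE\bigr)^p.
\]
Applying \eqref{vNconv} on the range of $E$ bounds this by $t\Tr\,(EXE)^p+(1-t)\Tr\,(EYE)^p$. For $p\ge1$, the Jensen-type inequality for compressions, $\Tr\,(EXE)^p\le\Tr\,EX^pE$, then bounds it further by
\[
t\|X^p\|_{(k)}+(1-t)\|Y^p\|_{(k)}.
\]
This compression inequality follows from $\langle h,Xh\rangle^p\le\langle h,X^ph\rangle$ together with eigenvector bases, or again from \eqref{vNconv} applied after diagonalizing $EXE$. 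This settles the Ky Fan case.

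\textbf{Step 3.} Combining Steps 1 and 2 with $X=|A|$ and $Y=|B|$ (absorbing the unitaries from Step 1), we get for every $k$
\[
\|P^p\|_{(k)}\le t\||A|^p\|_{(k)}+(1-t)\||B|^p\|_{(k)}.
\]
Passing from this family of Ky Fan inequalities to the general symmetric norm is the subtle point. Sums of the form $t\,a^\downarrow+(1-t)\,b^\downarrow$ dominate in the weak majorization sense, and symmetric norms are monotone for weak majorization. We therefore obtain $\|P^p\|\le t\||A|^p\|+(1-t)\||B|^p\|=1$, which gives $N(tA+(1-t)B)\le1$. If Fan dominance is not yet available at this point in the paper, the fallback is to cite the operator inequality of Step 2 from the literature, which avoids majorization altogether.
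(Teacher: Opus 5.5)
Your proof is correct. It is not the argument the paper writes out, though: it fleshes out the paper's one-line remark that ``the proof is similar to the previous one.''

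The paper's detailed proof is commutative. Thompson's inequality is used only once, to get $s(A+B)\prec_w s(A)+s(B)$. Then, with $\Phi$ the gauge function of $\|\cdot\|$, the paper shows two things:
\begin{itemize}
\item $\Psi(x)=\Phi(x^p)^{1/p}$ is a norm on nonnegative vectors. One writes $\Phi(w)=\sup\{\sum_i z_iw_i : \Phi^*(z)\le 1,\ z\ge 0\}$ and applies the weighted scalar Minkowski inequality inside the supremum.
\item $0\le x\prec_w y$ implies $x^p\prec_w y^p$, hence $\Psi(x)\le\Psi(y)$.
\end{itemize}
That route confines all the noncommutativity to a single weak majorization and treats the convexity of $s\mapsto s^p$ purely at the level of vectors, at the cost of gauge-function duality. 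Your route stays with operators and mirrors the Schatten-norm proof. It replaces duality by Jensen inequalities on compressions plus Fan dominance.

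Your Step~3 is fine. With $D=t(|A|^p)^{\downarrow}+(1-t)(|B|^p)^{\downarrow}$ you have $\|D\|_{(k)}=t\| |A|^p\|_{(k)}+(1-t)\| |B|^p\|_{(k)}$, because both diagonals are nonincreasing. So $P^p\prec_w D$, and Corollary~\ref{KFprinciple} with the triangle inequality for $\|\cdot\|$ gives $\|P^p\|\le t\| |A|^p\|+(1-t)\| |B|^p\|$. Both proofs lean on Section~\ref{s:majorization} in the same way.

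The obstacle you flag in Step~2 is in fact mild, and Fan dominance can be avoided altogether. Let $S$ be spanned by eigenvectors of $Z=tX+(1-t)Y$ for its $j$ largest eigenvalues. For every unit vector $h\in S$,
\[
\lambda_j^{\downarrow}(Z)^p\le\langle h,Zh\rangle^p\le t\langle h,Xh\rangle^p+(1-t)\langle h,Yh\rangle^p\le\bigl\langle h,\bigl(tX^p+(1-t)Y^p\bigr)h\bigr\rangle .
\]
The min-max principle then gives $\lambda_j^{\downarrow}(Z^p)\le\lambda_j^{\downarrow}(tX^p+(1-t)Y^p)$. These are the same scalar Jensen inequalities you already use, with a minimum over $S$ in place of a trace. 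They give exactly the operator inequality you wanted, even with $U_1=V_1$.

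Combined with Step~1, this yields, for some unitary $W$,
\[
|tA+(1-t)B|^p\le W\{tU|A|^pU^*+(1-t)V|B|^pV^*\}W^*,
\]
a convex, averaged counterpart of Theorem~\ref{thTAB}. After that, only monotonicity and the triangle inequality of $\|\cdot\|$ on $\bM_n^+$ are needed.
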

\vskip 5pt\noindent

\begin{proof} The proof is similar to the previous one. Alternatively, let $\Phi$ be the symmetric gauge function associated with $\|\cdot\|$. Thus
$\|X\|=\Phi(s(X))$, where $s(X)$ is the vector of singular values of $X$ in nonincreasing order. Thompson's inequality, or equivalently the Ky Fan inequalities, gives
$$
s(A+B)\prec_w s(A)+s(B).
$$
For nonnegative vectors $x,y$, define $\Psi(x)=\Phi(x^p)^{1/p}$. We first note that $\Psi$ is a norm. Indeed, using the dual gauge function $\Phi^*$ and the scalar Minkowski inequality,
\begin{align*}
\Phi((x+y)^p)^{1/p}
&=\sup_{\Phi^*(z)\le 1,\ z\ge0}\left(\sum_i z_i(x_i+y_i)^p\right)^{1/p}\\
&\le \Phi(x^p)^{1/p}+\Phi(y^p)^{1/p}.
\end{align*}
Moreover, if $0\le x\prec_w y$, then $x^p\prec_w y^p$ for $p>1$, and hence $\Phi(x^p)\le\Phi(y^p)$. Therefore,
$$
\bigl\| |A+B|^p\bigr\|^{1/p}
=\Psi(s(A+B))
\le \Psi(s(A)+s(B))
\le \Psi(s(A))+\Psi(s(B)),
$$
which is the required triangle inequality. The other norm properties and the unitary invariance are immediate.
\end{proof}

If $A\in\bM_{n_1+n_2}$ is partitioned as
$$
A=\begin{bmatrix} A_1&X \\ Y&A_2\end{bmatrix}
$$
then 
$$
\begin{bmatrix} A_1&0 \\ 0&A_2\end{bmatrix}
=\frac{1}{2}\left\{\begin{bmatrix} A_1&X \\ Y&A_2\end{bmatrix} +\begin{bmatrix}I&0 \\ 0&-I\end{bmatrix}\begin{bmatrix} A_1&X \\ Y&A_2\end{bmatrix}\begin{bmatrix}I&0 \\ 0&-I\end{bmatrix}\right\}.
$$
Hence, arguing by induction or recursively, we have the basic fact:

\vskip 5pt\noindent
\begin{prop}\label{pinchtrace}  If $\Delta(A)$ is the diagonal part of a  matrix $A\in\bM_n$, then, for all symmetric norms, 
$$
\| \Delta(A) \| \le \| A\|.
$$
\end{prop}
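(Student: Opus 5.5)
The plan is to iterate the $2\times 2$ block identity displayed just before the statement. Write $D=\begin{bmatrix}I&0\\0&-I\end{bmatrix}$, which is a Hermitian unitary. For a matrix $A$ partitioned into blocks of sizes $n_1+n_2$, let $\mathcal{C}(A)$ denote the block diagonal part $A_1\oplus A_2$. The identity reads
\[
\mathcal{C}(A)=\tfrac12\bigl(A+DAD\bigr).
\]
Since a symmetric norm is unitarily invariant, $\|DAD\|=\|A\|$. The triangle inequality then gives
\[
\|\mathcal{C}(A)\|\le \tfrac12\|A\|+\tfrac12\|DAD\|=\|A\|.
\]
So one block pinching does not increase any symmetric norm.

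Next I pass to the full diagonal by induction on $n$; the case $n=1$ is trivial. Split $A\in\bM_n$ with $n_1=1$ and $n_2=n-1$, so that $\mathcal{C}(A)=a_{11}\oplus A_2$ with $A_2\in\bM_{n-1}$. The diagonal of $A$ is then $\Delta(A)=a_{11}\oplus\Delta(A_2)$.

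The one point needing care is applying the induction hypothesis inside $\bM_n$. A symmetric norm on $\bM_n$ induces, through $X\mapsto\|a_{11}\oplus X\|$, a functional on $\bM_{n-1}$ that is not a norm when $a_{11}\ne0$. To avoid this, I replace the induction by a direct iteration that stays in $\bM_n$ throughout.

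For $k=1,\dots,n$, let $D_k$ be the diagonal unitary with entry $-1$ in position $k$ and $1$ elsewhere. Define
\[
\mathcal{C}_k(X)=\tfrac12\bigl(X+D_kXD_k\bigr).
\]
This map kills the off-diagonal entries of row $k$ and column $k$ and leaves every other entry unchanged. By the argument above, $\|\mathcal{C}_k(X)\|\le\|X\|$ for every symmetric norm. Entries killed at one step stay zero at later steps, so the composite kills all off-diagonal entries and
\[
\Delta(A)=\mathcal{C}_n\circ\cdots\circ\mathcal{C}_1(A).
\]
Applying the contraction $\mathcal{C}_k$ $n$ times then yields $\|\Delta(A)\|\le\|A\|$.

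There is no real obstacle in this argument. The only point to verify is the entrywise bookkeeping: $D_kXD_k$ changes the sign of exactly the entries $(k,j)$ and $(j,k)$ with $j\ne k$, and fixes all other entries.
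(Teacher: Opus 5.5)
Your proof is correct and takes essentially the paper's route: the paper writes the block pinching as $\frac12\bigl(A+DAD\bigr)$ with the Hermitian unitary $D=I\oplus(-I)$ and then argues ``by induction or recursively.'' Your single-index pinchings $\mathcal{C}_k$, each a contraction by unitary invariance and the triangle inequality, are a concrete form of that recursion kept inside $\bM_n$, which also settles the induction subtlety you raised.
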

\vskip 5pt\noindent

We close this section with a historical comment and a related conjecture. Monotone pairs were studied by the first author; see \cite{Bou06} for von Neumann trace rearrangement inequalities and further references.

\vskip 5pt\noindent
\begin{rem} If $(A,B)$ is a monotone pair in $\bM_n^+$, it is natural to conjecture  \cite{Bou99} that, for every symmetric norm,
$$
\| AZB \| \le \sqrt{2}\| ZAB \|.
$$
The remaining material of this section is rather classical, but our proofs are original. In particular, the proof of the Minkowski inequality in
Proposition~\ref{Sptriang}, mentioned in Remark~\ref{originalproof},
is a new and simple application of the trace Jensen inequality \eqref{vNconv}.
\end{rem}
\vskip 5pt\noindent

\section{Majorization and unitary orbits}\label{s:majorization}

We now recall the majorization language needed to pass from operator inequalities to all symmetric norms. The emphasis is on convex hulls and unitary orbits.

 For a pair of Hermitian matrices $A,B\in\bM_n$, the weak majorization, or submajorization, relation $A\prec_w B$ means that
 \begin{equation}\label{submajdef}
\sum_{j=1}^k\lambda_j^{\downarrow}(A) \le  \sum_{j=1}^k\lambda_j^{\downarrow}(B)
\end{equation}
for all $k=1,\ldots, n$. If,
 furthermore,   equality holds for $k=n$, that  is, if $A$ and $B$ have  the same trace, then we say that $A$  is majorized by $B$, and we write 
\begin{equation}\label{majorization}
A\prec B.
\end{equation}
The notation in \eqref{majorization} will be used throughout this section. 
For each $k$, the sum of the $k$ largest eigenvalues occurring in
\eqref{submajdef},
\begin{equation}\label{sumkeig}
	A\mapsto \sum_{j=1}^k\lambda_j^{\downarrow}(A),
\end{equation}
is a subadditive functional on $\bM_n^{\sa}$. This follows from the
Ky Fan variational principle
\begin{equation}\label{variassumeig}
	\sum_{j=1}^k\lambda_j^{\downarrow}(A)
	=
	\max_{\substack{P=P^*=P^2\\ \Tr P=k}}\Tr(PA).
\end{equation} 
This is a direct consequence of the min-max principle. On the positive cone $\bM_n^+$, the subadditivity of \eqref{sumkeig}
is exactly the subadditivity of the Ky Fan $k$-norms.

Majorization and weak majorization relations lie at the heart of matrix analysis; see, for example, Ando's survey \cite{And89}. A simple example is Schur's theorem, a direct consequence of \eqref{variassumeig}:

\vskip 5pt\noindent
\begin{prop}\label{Schur} If $\Delta(A)$ is the diagonal part of a Hermitian matrix $A\in\bM_n$, then 
$$
\Delta(A)\prec A.
$$
\end{prop}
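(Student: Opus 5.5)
We use the Ky Fan variational principle \eqref{variassumeig}, as the paper says the statement is a direct consequence of it. Write $\Delta(A)={\mathrm{diag}}(a_{11},\ldots,a_{nn})$, where the $a_{jj}=\langle e_j,Ae_j\rangle$ are real because $A$ is Hermitian. Since $\Delta(A)$ is diagonal, its eigenvalues are exactly the numbers $a_{jj}$. So, for each $k$, the sum of its $k$ largest eigenvalues is $\sum_{j\in J}a_{jj}$ for some index set $J\subset\{1,\ldots,n\}$ with $|J|=k$. We choose $J$ to consist of the indices of the $k$ largest diagonal entries.

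Let $P_J$ be the coordinate projection onto ${\mathrm{span}}\{e_j: j\in J\}$. It is an orthogonal projection with $\Tr P_J=k$. Moreover
$$
\Tr(P_JA)=\sum_{j\in J}\langle e_j,Ae_j\rangle=\sum_{j\in J}a_{jj}.
$$
Applying \eqref{variassumeig} to $A$ then gives
$$
\sum_{j=1}^k\lambda_j^{\downarrow}(\Delta(A))=\Tr(P_JA)\le\max_{\substack{P=P^*=P^2\\ \Tr P=k}}\Tr(PA)=\sum_{j=1}^k\lambda_j^{\downarrow}(A).
$$
This holds for every $k=1,\ldots,n$, so $\Delta(A)\prec_w A$.

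For the equality case $k=n$, we only need $\Tr\Delta(A)=\sum_j a_{jj}=\Tr A$. Together with the inequalities above, this gives $\Delta(A)\prec A$ in the sense of \eqref{majorization}.

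There is no real obstacle here. The only point requiring care is to check that the projection realizing the top $k$ diagonal entries is admissible in \eqref{variassumeig}. It is: a coordinate projection is self-adjoint, idempotent, and has trace $k$.
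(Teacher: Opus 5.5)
Your proof is correct and takes the same approach as the paper. The paper only states that Schur's theorem is a direct consequence of the Ky Fan principle \eqref{variassumeig}; testing it on the coordinate projection $P_J$ onto the $k$ largest diagonal entries, and adding $\Tr\Delta(A)=\Tr A$, fills in exactly that one-line justification.
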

\vskip 5pt\noindent

Despite its simplicity, this result has been extremely influential in the development of matrix analysis.
Consideration of Hermitian or positive two-by-two matrices suggests that majorization and weak majorization are related to convexity properties. To state these convexity results,  we need a few notations.

{\bf Notations. } If $R\in\bM_n$ is a diagonal matrix, $R={\mathrm{diag}}(r_1,\ldots,r_n)$  and $\sigma\in\mathfrak{S}_n$ is a permutation of $\{1,\ldots,n\}$, we define  the eigenvalue vectors:
$$
r=(r_1,\ldots,r_n), \qquad
r_{\sigma}:=(r_{\sigma(1)},\ldots,r_{\sigma(n)})
$$
In particular, If $A\in\bM_n$ is Hermitian, we define  the eigenvalue vectors:
 $$a^{\downarrow}=(a_1^{\downarrow},\ldots, a_n^{\downarrow}):=(\lambda_1^{\downarrow}(A),\ldots, \lambda_n^{\downarrow}(A))$$
and
$$
a^{\downarrow}_{\sigma}=(a_{\sigma(1)}^{\downarrow},\ldots,a_{\sigma(n)}^{\downarrow}):=(\lambda_{\sigma(1)}^{\downarrow}(A),\ldots, \lambda_{\sigma(n)}^{\downarrow}(A)).
$$
Further we denote by $\trE_n$ the set of vectors of $\bR^n$ with $0$-$1$ entries, thus $\eps\in\trE_n$ if and only if $\eps=(\eps_1,\ldots, \eps_n)$ where $\eps_i\in\{0,1\}$. For such a vector  $\eps$ and an eigenvalue vector $c_{\sigma}$ associated to a diagonal matrix $C\in\bM_n$ and a permutation $\sigma$, we define the product 
$$
\eps c_{\sigma}:=(\eps_1c_{\sigma(1)},\ldots,\eps_nc_{\sigma(n)}).
$$

We may then state a fundamental proposition:  majorization is equivalent to the property that an eigenvalue vector belongs to the convex hull of the permutations of another eigenvalue vector.

\vskip 5pt\noindent
\begin{prop}\label{majconv}
Let $A,B$ be two Hermitian matrices in $\bM_n$. Then,
$$
A\prec B \iff a^{\downarrow} \in {\mathrm{conv}}\{\, b_{\sigma}^{\downarrow} \ : \ \sigma\in\mathfrak{S}_n\,\}.
$$
\end{prop}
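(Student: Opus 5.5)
The direction ($\Leftarrow$) is the easy one, and I would do it first. Suppose $a^{\downarrow}=\sum_\sigma t_\sigma b^{\downarrow}_\sigma$ with $t_\sigma\ge0$ and $\sum t_\sigma=1$. Fix $k$ and let $\eps\in\trE_n$ be the $0$-$1$ vector with ones in the first $k$ positions. Then
$$\sum_{j=1}^k a_j^{\downarrow}=\sum_\sigma t_\sigma\sum_{j=1}^k b^{\downarrow}_{\sigma(j)}\le \sum_\sigma t_\sigma \sum_{j=1}^k b^{\downarrow}_j=\sum_{j=1}^k b^{\downarrow}_j,$$
since any $k$ entries of $b^{\downarrow}$ sum to at most its $k$ largest. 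For $k=n$ the inequality is an equality, because permutations preserve the total sum. Hence $A\prec B$.

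For ($\Rightarrow$) I would argue by separation, in keeping with the paper's theme of convex hulls. Set $K={\mathrm{conv}}\{b^{\downarrow}_\sigma\}$, a compact convex subset of $\bR^n$. Suppose $a^{\downarrow}\notin K$. By Hahn--Banach there is $c\in\bR^n$ with
$$\langle c,a^{\downarrow}\rangle>\max_\sigma\langle c,b^{\downarrow}_\sigma\rangle.$$
The right-hand side equals $\langle c^{\downarrow},b^{\downarrow}\rangle$ by the scalar rearrangement inequality, which is the commutative case of Lemma~\ref{lemVN2} applied to the diagonal matrices ${\mathrm{diag}}(c)$ and ${\mathrm{diag}}(b^{\downarrow}_\sigma)$. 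On the other side, $\langle c,a^{\downarrow}\rangle\le\langle c^{\downarrow},a^{\downarrow}\rangle$ by the same rearrangement bound. So it suffices to show that $A\prec B$ implies $\langle c^{\downarrow},a^{\downarrow}\rangle\le\langle c^{\downarrow},b^{\downarrow}\rangle$ for every nonincreasing $c^{\downarrow}$; this yields the contradiction.

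This last step is Abel summation. Write
$$c^{\downarrow}=\sum_{k=1}^{n-1}(c_k^{\downarrow}-c_{k+1}^{\downarrow})\,e_{[k]}+c_n^{\downarrow}\,e_{[n]},$$
where $e_{[k]}\in\trE_n$ has ones in the first $k$ positions. The coefficients $c_k^{\downarrow}-c_{k+1}^{\downarrow}$ are nonnegative, and the partial-sum inequalities of $A\prec B$ give $\langle e_{[k]},a^{\downarrow}\rangle\le\langle e_{[k]},b^{\downarrow}\rangle$. The last coefficient $c_n^{\downarrow}$ may have either sign, but it multiplies the full sums, which are equal by the trace condition. Summing gives the desired inequality.

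The main obstacle is this sign issue: one must use trace equality exactly there, since otherwise the argument only yields weak majorization. Everything else is routine.
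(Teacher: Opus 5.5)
Your proof is correct and follows essentially the paper's route for both directions. For ($\Rightarrow$) you use strict separation, then the commutative case of Lemma~\ref{lemVN2} to replace the separating vector by its nonincreasing rearrangement on both sides, then Abel summation against the partial-sum inequalities; for ($\Leftarrow$) you use that any $k$ coordinates of $b^{\downarrow}$ sum to at most the $k$ largest. The only difference is cosmetic: the paper first shifts the separating vector by a constant to make its smallest entry positive, whereas you note directly that $c_n^{\downarrow}$ multiplies a vanishing difference of traces, which is slightly cleaner.
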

\vskip 5pt\noindent

\begin{proof}  Assume first that $A\prec B$. If $ a^{\downarrow} \notin {\mathrm{conv}}\{\,  b_{\sigma}^{\downarrow} \ : \  \sigma\in\mathfrak{S}_n\,\}$, then we would have a vector
 $x=(x_1,\ldots, x_n)$ such that, for any permutation $\sigma$,
$$
\sum_{i=1}^n x_ia_i^{\downarrow} > \sum_{i=1}^n x_ib_{\sigma(i)}^{\downarrow}.
$$
and so, with a suitable choice of $\sigma$,
$$
\sum_{i=1}^n x_ia_i^{\downarrow} > \sum_{i=1}^n x_i^{\downarrow}b_i^{\downarrow}
$$
where $x^{\downarrow}$ denotes the vector obtained from $x$ by nonincreasing rearrangement of its components. By the commutative case of von Neumann trace inequality (Lemma \ref{lemVN2}), this would entail:
\begin{equation}\label{proofmaj}
\sum_{i=1}^n x_i^{\downarrow}a_i^{\downarrow} > \sum_{i=1}^n x_i^{\downarrow}b_i^{\downarrow}.
\end{equation}
Since $\sum_{i=1}^n a_i^{\downarrow} =\sum_{i=1}^n b_i^{\downarrow}$, we would also have
$$
\sum_{i=1}^n y_i^{\downarrow}a_i^{\downarrow} > \sum_{i=1}^n y_i^{\downarrow}b_i^{\downarrow}.
$$
where $y_i=x_i+c$, the constant $c>0$ being large enough to ensure $y^{\downarrow}_n>0$. Hence we may assume that $x_n^{\downarrow}>0$. 
 Observe that \eqref{proofmaj}  implies
\begin{align*}
x_n^{\downarrow}\left\{\sum_{i=1}^n (a_i^{\downarrow} -b_i^{\downarrow})\right\} 
 +(x^{\downarrow}_{n-1}-x_n^{\downarrow}&)
\left\{\sum_{i=1}^{n-1} (a_i^{\downarrow} -b_i^{\downarrow})\right\}+\cdots \\
 &+\cdots +(x_1^{\downarrow}-x_2^{\downarrow})\left\{a_1^{\downarrow} -b_1^{\downarrow}\right\}>0.
\end{align*}
As $x_n^{\downarrow}>0$,  this  contradicts $A\prec B$. Therefore
 $ a^{\downarrow} \in {\mathrm{conv}}\{\, b_{\sigma}^{\downarrow} \ :  \ \sigma\in\mathfrak{S}_n\,\}$.

Conversely, suppose that
$$
a^{\downarrow}=\sum_{r=1}^N t_r b_{\sigma_r}^{\downarrow},
\qquad t_r\ge0,\quad \sum_{r=1}^N t_r=1.
$$
For each $k$, the sum of the $k$ largest coordinates is a convex function. Hence
$$
\sum_{j=1}^k a_j^{\downarrow}
\le \sum_{r=1}^N t_r\sum_{j=1}^k b_j^{\downarrow}
=\sum_{j=1}^k b_j^{\downarrow}.
$$
For $k=n$, equality holds. Thus $A\prec B$.
\end{proof}

The convex-hull characterization immediately yields a unitary-orbit formulation of weak majorization.

\vskip 5pt\noindent
\begin{cor}\label{wmajorbit}
Let $A,B$ be two Hermitian matrices in $\bM_n$. If $A\prec_w B$ then there exist unitary matrices $\{U_i\}_{i=1}^{n+1}$ such that
$$
A\le \sum_{i=1}^{n+1} t_{i} U_iBU_i^*
$$
for a convex combination with weights $0\le t_i\le 1$, $\sum_{i=1}^{n+1}t_i=1$.
\end{cor}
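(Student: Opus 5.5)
The plan is to reduce weak majorization to majorization by shrinking the eigenvalues of $A$ to a matrix $A'\ge A$ with $A'\prec B$. Then we apply Proposition~\ref{majconv} and Carathéodory's theorem, and finally realize each permuted eigenvalue vector of $B$ as a unitary conjugate of $B$.

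First, we may replace $A$ and $B$ by unitary conjugates, so we assume $A=A^{\downarrow}$ and $B=B^{\downarrow}$ are diagonal with nonincreasing entries. The relation $A\prec_w B$ gives $\sum_{j\le k}a_j^{\downarrow}\le\sum_{j\le k}b_j^{\downarrow}$ for all $k$, but the total traces may differ. Set $\delta=\Tr B-\Tr A\ge0$. We enlarge the last diagonal entry and define $A'=\mathrm{diag}(a_1^{\downarrow},\ldots,a_{n-1}^{\downarrow},a_n^{\downarrow}+\delta)$, so that $A\le A'$.

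The main obstacle is to check $A'\prec B$, since the entries of $A'$ need no longer be ordered. I would argue as follows. For $k<n$, the sum of the $k$ largest entries of $A'$ is either $\sum_{j\le k}a_j^{\downarrow}$, or it is $\sum_{j\le k-1}a_j^{\downarrow}+a_n^{\downarrow}+\delta$. The first is at most $\sum_{j\le k}b_j^{\downarrow}$ by hypothesis. For the second, note that
\[
a_n^{\downarrow}+\delta=\Tr B-\sum_{j\le n-1}a_j^{\downarrow}.
\]
Hence the second sum equals $\Tr B-\sum_{j=k}^{n-1}a_j^{\downarrow}$. This is at most $\Tr B-\sum_{j=k+1}^{n}b_j^{\downarrow}=\sum_{j\le k}b_j^{\downarrow}$ precisely when
\[
\sum_{j=k+1}^n b_j^{\downarrow}\le \sum_{j=k}^{n-1}a_j^{\downarrow}.
\]
That inequality can fail. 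So if the simple construction fails, I would instead use the standard fix of a general vector $c\ge a^{\downarrow}$ with $c\prec b^{\downarrow}$. Choose $c=(b_1^{\downarrow},\ldots,b_{m-1}^{\downarrow},\ast,a_{m+1}^{\downarrow},\ldots)$, or more cleanly increase the coordinates of $a^{\downarrow}$ from the bottom upward. Concretely, let $t$ be minimal such that $\sum_{j}\max(a_j^{\downarrow},t)=\Tr B$, and set $c_j=\max(a_j^{\downarrow},t)$. The vector $c$ is still nonincreasing and $c\ge a^{\downarrow}$. For each $k$, either $c_j=a_j^{\downarrow}$ for all $j\le k$, and the hypothesis gives the bound, or $c_j=t$ for all $j>k$. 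In the latter case $\sum_{j\le k}c_j=\Tr B-(n-k)t$, and one needs $(n-k)t\ge\sum_{j>k}b_j^{\downarrow}$. This follows by comparing with the index $k_0$ where the threshold starts, using the fact that the tail average of $c$ is at least the tail average of $b$. This water-filling verification is the step I expect to take the most care.

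With $C=\mathrm{diag}(c)\prec B$ and $A\le C$, Proposition~\ref{majconv} writes $c$ as a convex combination of the vectors $b^{\downarrow}_\sigma$. These lie in the affine hyperplane of vectors with coordinate sum $\Tr B$, which has dimension $n-1$. Carathéodory's theorem therefore lets us use at most $n$ of them, and certainly at most $n+1$ as stated. Each $\mathrm{diag}(b^{\downarrow}_\sigma)$ equals $P_\sigma BP_\sigma^*$ for a permutation matrix $P_\sigma$. Hence $A\le C=\sum_i t_iP_{\sigma_i}BP_{\sigma_i}^*$. Undoing the initial unitary diagonalizations gives the required unitaries $U_i$.
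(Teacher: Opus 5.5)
Your proof is correct, but you reduce weak majorization to majorization on the opposite side from the paper. The paper leaves $A$ untouched and lowers only the smallest eigenvalue of $B$. With $\mu=\lambda_n^{\downarrow}(B)-\Tr B+\Tr A\le\lambda_n^{\downarrow}(B)$, replacing $\lambda_n^{\downarrow}(B)$ by $\mu$ gives a matrix $B'\le B$ whose eigenvalues stay in nonincreasing order. So the partial sums for $k<n$ are unchanged, and $A\prec B'$ is immediate. Proposition~\ref{majconv} and Carath\'eodory's theorem then write $A$ \emph{exactly} as a convex combination of unitary conjugates of $B'$, and $B'\le B$ finishes the proof.

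Your first attempt, adding $\delta$ to $a_n^{\downarrow}$, fails precisely because raising the bottom entry of $a^{\downarrow}$ breaks the ordering, whereas lowering the bottom entry of $b^{\downarrow}$ does not. That is why you are forced into the water-filling vector $c_j=\max(a_j^{\downarrow},t)$. This step is sound. For $k\le k_0$ you use the hypothesis. For $k\ge k_0$ you combine $(n-k_0)t=\Tr B-\sum_{j\le k_0}a_j^{\downarrow}\ge\sum_{j>k_0}b_j^{\downarrow}$ with the fact that the tail averages $\frac{1}{n-k}\sum_{j>k}b_j^{\downarrow}$ do not increase with $k$. In a write-up you should:
\begin{itemize}
\item state this monotonicity of tail averages explicitly;
\item drop the failed first attempt;
\item fix the phrase ``$t$ minimal'': if $\Tr A=\Tr B$ take $c=a^{\downarrow}$, and otherwise $t>a_n^{\downarrow}$ is unique.
\end{itemize}

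What your route buys is that it proves Corollary~\ref{wmajdiag} first. Indeed $\mathrm{diag}(c)=A^{\downarrow}+\Delta$ with $\Delta\ge0$ diagonal and $\mathrm{diag}(c)\prec B^{\downarrow}$, and you then derive the present corollary from it. This reverses the paper's order, in which Corollary~\ref{wmajdiag} is deduced from this one via Schur's theorem. The paper's route buys a one-line reduction with nothing to verify.

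Your observation that the permuted vectors $b_\sigma^{\downarrow}$ lie in an $(n-1)$-dimensional affine hyperplane, so that $n$ unitaries suffice, is correct. It applies verbatim to the paper's argument as well.
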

\vskip 5pt\noindent

\begin{proof} Consider the spectral decomposition
$
B=\sum_{k=1}^n\lambda_k^{\downarrow}(B)E_k,
$
and define
$$
B':= \left(\sum_{k=1}^{n-1}\lambda_k^{\downarrow}(B)E_k\right) + \mu E_n
$$
where $\mu:= \lambda_n^{\downarrow}(B)-{\mathrm{Tr\,}} B +{\mathrm{Tr\,}} A$  is chosen so that
$$
A\prec B'.
$$
The weak-majorization inequality for $k=n$ gives $\mu\le\lambda_n^{\downarrow}(B)$, and hence $B'\le B$.
By Proposition~\ref{majconv} and Carathéodory's theorem in \(\mathbb R^n\),
\(A\) can be written as a convex combination of at most \(n+1\)
elements in the unitary orbit of \(B'\):
\[
A= \sum_{i=1}^{n+1} t_i U_iB'U_i^*,
\]
where some coefficients may be zero. This completes the proof.
\end{proof}

Taking diagonal parts in the preceding orbit inequality gives a useful refinement.

\vskip 5pt\noindent
\begin{cor}\label{wmajdiag}
Let $A,B$ be two Hermitian matrices in $\bM_n$. If $A\prec_w B$ then there exists a diagonal matrix $\Delta\in\bM_n^+$ such that
$$
A^{\downarrow} +\Delta\prec B^{\downarrow}
$$
\end{cor}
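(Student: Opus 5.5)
The plan is to follow the construction in the proof of Corollary~\ref{wmajorbit}, but to work directly with eigenvalue vectors rather than unitary orbits. Write $a=a^{\downarrow}$, $b=b^{\downarrow}$ and set
\[
d:=\sum_{j=1}^n (b_j-a_j)\ \ge 0,
\]
which is nonnegative by the case $k=n$ of the weak majorization $A\prec_w B$. The aim is to find a nonnegative vector $\delta$ such that $a+\delta$, once rearranged nonincreasingly, is majorized by $b$. The diagonal matrix $\Delta={\mathrm{diag}}(\delta)$ is then the required element of $\bM_n^+$.

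The simplest choice is $\delta=(0,\ldots,0,d)$, that is, to add the whole defect to the last coordinate. The traces then agree, but the new last entry $a_n+d$ may exceed $a_{n-1}$. After reordering, the partial sums $\sum_{j\le k}$ of the vector $a+\delta$ could then exceed those of $b$, so this choice alone may fail.

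I would therefore proceed greedily from the top instead. Let
\[
S_k(a)=\sum_{j\le k}a_j, \qquad S_k(b)=\sum_{j\le k}b_j,
\]
and define $c$ by $c_k=a_k+\delta_k$, where
\[
\delta_k:=\bigl(S_k(b)-S_{k-1}(c)\bigr)-a_k \quad\text{capped so that } c_k\le c_{k-1}.
\]
A cleaner explicit alternative is
\[
c_k:=\min\Bigl(c_{k-1},\ \max\bigl(a_k,\ S_k(b)-S_{k-1}(c)\bigr)\Bigr).
\]
I expect the key verification, and the main obstacle, to be proving that $c$ is nonincreasing, that $c\ge a$, that $S_k(c)\le S_k(b)$ for all $k$, and that $S_n(c)=S_n(b)$. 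The danger is that the cap $c_k\le c_{k-1}$ prevents the total from reaching $S_n(b)$.

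To handle this, I would switch to a continuity argument. Consider
\[
c(t)=a+t\,(b-a)_+^{\ast},
\]
or more simply raise the smallest entries of $a$ uniformly, in the water-filling style:
\[
c(\theta)_j=\max(a_j,\theta).
\]
This vector is nonincreasing and $\ge a$, and $S_n(c(\theta))$ increases continuously to $+\infty$. Choose $\theta$ with $S_n(c(\theta))=S_n(b)$.

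It remains to check $S_k(c(\theta))\le S_k(b)$ for each $k$. If $\theta\le a_k$, then $S_k(c)=S_k(a)\le S_k(b)$. Otherwise $c_j=\theta$ for all $j\ge k$, and
\[
S_k(c)=S_n(b)-(n-k)\theta.
\]
So the required inequality is equivalent to $\sum_{j>k}b_j\ge (n-k)\theta$. Since $c_j=\theta$ for all $j\ge k$ while $b$ is nonincreasing, the average of $c$ over the tail $j>k$ is $\theta$. This should follow from $S_{m}(c)\le S_m(b)$ for the largest index $m$ with $a_m\ge\theta$, applied to the tail, where the average of $b$ is at least $\theta$. I expect this tail-averaging inequality to be the crux, and I would verify it by comparing the tail averages of $c$ and $b$. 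Setting $\Delta={\mathrm{diag}}(c(\theta)-a)\ge 0$ then gives $A^{\downarrow}+\Delta\prec B^{\downarrow}$.
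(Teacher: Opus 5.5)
Your water-filling vector $c(\theta)_j=\max(a_j,\theta)$ is a good choice. It is nonincreasing and dominates $a$, and the case $\theta\le a_k$ is handled correctly. However, the crux has the inequality reversed, and as you state it, it is false.

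When $\theta>a_k$ you have $S_k(c)=S_n(b)-(n-k)\theta$. So the condition $S_k(c)\le S_k(b)$ is equivalent to $\sum_{j>k}b_j\le (n-k)\theta$: the tail average of $b$ must be \emph{at most} $\theta$, not at least $\theta$. Your version already fails for $a=(0,0)$ and $b=(1,0)$. There $\theta=1/2$, and for $k=1$ you would need $0\ge 1/2$, even though $S_1(c)=1/2\le 1=S_1(b)$ holds. So the step ``the average of $b$ over the tail is at least $\theta$'' cannot be carried out.

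The repair uses exactly the ingredients you name. Let $m<k$ be the number of indices with $a_j\ge\theta$. Then $S_m(a)+(n-m)\theta=S_n(b)$, and weak majorization at $m$ (trivial if $m=0$) gives
\[
(n-m)\theta=S_n(b)-S_m(a)\ge S_n(b)-S_m(b)=\sum_{j>m}b_j .
\]
Since $b$ is nonincreasing, the tail averages $\frac{1}{n-k}\sum_{j>k}b_j$ are nonincreasing in $k$. Hence for $m\le k<n$ you get $\sum_{j>k}b_j\le (n-k)\theta$, which is what is needed; the case $k=n$ is the trace equality fixed by the choice of $\theta$.

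With this correction, your argument is a complete proof by a genuinely different route from the paper's. The paper takes the positive gap $C$ in the unitary-orbit inequality of Corollary~\ref{wmajorbit}, uses its diagonal, and concludes by Schur's theorem and transitivity. That proof depends on Proposition~\ref{majconv} and Carath\'eodory's theorem, and it illustrates the unitary-orbit viewpoint the section is promoting. Your vector argument bypasses all of that and produces an explicit $\Delta$. It therefore also gives an independent path to Corollary~\ref{wmajconv}. You can drop the earlier greedy and $c(t)$ attempts from the write-up.
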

\vskip 5pt\noindent

\begin{proof} By the inequality in the previous corollary, 
$$
A^{\downarrow}  \le V\left\{\sum_{i=1}^{n+1} t_{i} U_iBU_i^*\right\} V^*
$$
for some unitary $V$. Letting
$$
C= V\left\{\sum_{i=1}^{n+1} t_{i} U_iBU_i^*\right\} V^* -A^{\downarrow}
$$
we have $C\ge 0$. The matrix $A^{\downarrow}+C$ is a convex combination of unitary conjugates of $B$, and so
$$
A^{\downarrow} +C\prec B^{\downarrow}.
$$
By Schur's theorem,
$$
A^{\downarrow}+\Delta(C)=\Delta(A^{\downarrow}+C)\prec A^{\downarrow}+C.
$$
Therefore, with $\Delta:=\Delta(C)$ and by transitivity, we have $A^{\downarrow} +\Delta \prec B^{\downarrow}$.
\end{proof}

Combining this diagonal description with the monotonicity of symmetric norms gives Ky Fan's dominance principle.

\vskip 5pt\noindent
\begin{cor}\label{KFprinciple}
Let $A,B\in \bM_n$. Then,
$$
|A|\prec_w |B| \iff \| A\| \le \| B\| \ for \ all \ symmetric \ norms.
$$ 
\end{cor}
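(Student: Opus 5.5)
The forward implication goes through the diagonal form of weak majorization, Corollary~\ref{wmajdiag}; the converse uses only the Ky Fan $k$-norms.

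\emph{($\Rightarrow$)} Assume $|A|\prec_w|B|$ and fix a symmetric norm $\|\cdot\|$. Corollary~\ref{wmajdiag}, applied to the Hermitian pair $|A|,|B|$, gives a diagonal $\Delta\in\bM_n^+$ with
$$|A|^{\downarrow}+\Delta\prec |B|^{\downarrow}.$$
By Proposition~\ref{majconv}, the eigenvalue vector of the diagonal matrix $|A|^{\downarrow}+\Delta$ is a convex combination of permutations of the vector of $|B|^{\downarrow}$. In matrix terms,
$$|A|^{\downarrow}+\Delta=\sum_r t_r P_r|B|^{\downarrow}P_r^*$$
with permutation matrices $P_r$. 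Permuting the diagonal entries of $|A|^{\downarrow}+\Delta$ back into nonincreasing order costs only a unitary conjugation, so we may work with the matrix itself. The triangle inequality and unitary invariance then give $\||A|^{\downarrow}+\Delta\|\le\||B|^{\downarrow}\|=\|B\|$. Since $0\le|A|^{\downarrow}\le|A|^{\downarrow}+\Delta$ and symmetric norms respect the order on $\bM_n^+$ (shown in Section~\ref{s:symmetric-norms}), we get $\|A\|=\||A|^{\downarrow}\|\le\|B\|$.

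\emph{($\Leftarrow$)} Assume $\|A\|\le\|B\|$ for all symmetric norms. The Ky Fan $k$-norms of Definition~\ref{defKFN} are symmetric norms, so $\sum_{j\le k}\lambda_j^{\downarrow}(|A|)\le\sum_{j\le k}\lambda_j^{\downarrow}(|B|)$ for every $k$. This is exactly the definition of $|A|\prec_w|B|$.

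The only delicate point is the forward direction. It needs the slack $\Delta$, because plain majorization fails when the traces differ, and it relies on the monotonicity of symmetric norms on the positive cone. Both ingredients are already available, so the main work is just citing Corollary~\ref{wmajdiag} correctly.
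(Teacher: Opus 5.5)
Your proof is correct and follows essentially the paper's route. The paper derives ($\Rightarrow$) directly from Corollary~\ref{wmajorbit}, which already gives $|A|\le\sum_i t_iU_i|B|U_i^*$; the triangle inequality, unitary invariance and monotonicity on $\bM_n^+$ then finish exactly as in your argument, and ($\Leftarrow$) is the same trivial Ky Fan $k$-norm step. Your detour through Corollary~\ref{wmajdiag} and Proposition~\ref{majconv} only reconstructs that orbit domination (with $|A|^{\downarrow}$ in place of $|A|$), so it is valid but slightly longer than needed.
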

\vskip 5pt\noindent

\begin{proof} The implication $\Rightarrow$ is a direct consequence of Corollary \ref{wmajorbit}, the other implication is trivial.
\end{proof}

For positive matrices, the same criterion has a purely eigenvalue-vector formulation.

\vskip 5pt\noindent
\begin{cor}\label{wmajconv}
Let $A,B$ be two positive matrices in $\bM_n$. Then,
$$
A\prec_w B \iff a^{\downarrow} \in {\mathrm{conv}}\{\, \eps b_{\sigma}^{\downarrow} \ : \ \eps\in\trE_n, \ \sigma\in\mathfrak{S}_n\,\}.
$$
\end{cor}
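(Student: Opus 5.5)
For the direction $\Rightarrow$, I would combine Corollary~\ref{wmajdiag} with Proposition~\ref{majconv}. Assume $A\prec_w B$ with $A,B\in\bM_n^+$. Corollary~\ref{wmajdiag} gives a diagonal $\Delta={\mathrm{diag}}(\delta_1,\ldots,\delta_n)\ge 0$ with $A^{\downarrow}+\Delta\prec B^{\downarrow}$. Proposition~\ref{majconv}, applied to the diagonal Hermitian matrices $A^{\downarrow}+\Delta$ and $B^{\downarrow}$, then shows that the eigenvalue vector of $A^{\downarrow}+\Delta$ lies in ${\mathrm{conv}}\{b^{\downarrow}_\sigma\}$. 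That proposition is stated for the nonincreasing rearrangement, but the convex hull of all permutations is permutation invariant, so the unordered vector $a^{\downarrow}+\delta$ also lies in it. Hence
\[
a^{\downarrow}+\delta\in {\mathrm{conv}}\{ b^{\downarrow}_\sigma\}\subset K:={\mathrm{conv}}\{\eps b^{\downarrow}_\sigma\ :\ \eps\in\trE_n,\ \sigma\in\mathfrak{S}_n\}.
\]
It remains to remove $\delta$, and I expect this to be the main point. The step needed is that $K$ is \emph{downward closed in the nonnegative orthant}: if $y\in K$ and $0\le x\le y$ coordinatewise, then $x\in K$.

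I would prove downward closedness one coordinate at a time. First, for each $i$ and each generator $v=\eps b^{\downarrow}_\sigma$, the vector obtained by setting the $i$-th coordinate of $v$ to zero is again a generator (just set $\eps_i=0$). By linearity, for any $y\in K$ the vector $y$ with its $i$-th coordinate replaced by $0$ is also in $K$. Now take $0\le x_i\le y_i$ with $y_i>0$. The vector with $i$-th coordinate $x_i$ is the convex combination with weights $x_i/y_i$ and $1-x_i/y_i$ of $y$ and of $y$ with coordinate $i$ zeroed, so it lies in $K$. Iterating over $i=1,\ldots,n$ takes $y=a^{\downarrow}+\delta$ down to $x=a^{\downarrow}$. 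This uses $a^{\downarrow}\ge0$, which holds since $A\ge0$.

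For $\Leftarrow$, I would argue as in the converse part of Proposition~\ref{majconv}. Suppose $a^{\downarrow}=\sum_r t_r\,\eps^{(r)}b^{\downarrow}_{\sigma_r}$. The functional ``sum of the $k$ largest coordinates'' is convex. On each generator it is at most $\sum_{j\le k}b_j^{\downarrow}$, because zeroing coordinates of a nonnegative vector cannot increase its $k$ largest entries, and permuting does not change them. Therefore $\sum_{j\le k}a_j^{\downarrow}\le\sum_{j\le k}b_j^{\downarrow}$ for every $k$, that is, $A\prec_w B$. Positivity of $B$ is used here, since zeroing would be harmful for negative entries.
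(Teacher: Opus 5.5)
Your proof is correct and follows the paper's route. Corollary~\ref{wmajdiag} and Proposition~\ref{majconv} give $a^{\downarrow}+\delta\in{\mathrm{conv}}\{b^{\downarrow}_\sigma\}$, and $\delta$ is then removed using the fact that zeroing coordinates sends generators to generators. The paper does this in one step via $a^{\downarrow}\in{\mathrm{conv}}\{\eps(a^{\downarrow}+\delta):\eps\in\trE_n\}$, while you do it one coordinate at a time. The converse is the same convexity argument for the sum of the $k$ largest coordinates. Your observation that $a^{\downarrow}+\delta$ need not be nonincreasing, handled by the permutation invariance of the hull, makes explicit a small point the paper leaves unstated.
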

\vskip 5pt\noindent

\begin{proof} By Corollary \ref{wmajdiag}
there exists a diagonal matrix $\Delta\in\bM_n^+$ such that
$$
A^{\downarrow} +\Delta\prec B^{\downarrow}.
$$ 
By Proposition \ref{majconv},
$$
a^{\downarrow}+\delta \in {\mathrm{conv}}\{\, b_{\sigma}^{\downarrow} \ : \ \sigma\in\mathfrak{S}_n\,\}.
$$
Note that $a^{\downarrow}\in{\mathrm{conv}}\{\, \eps(a^{\downarrow}+\delta) : \ \eps\in\trE_n\,\}$ and so for each $\nu\in\trE_n$, we have
$$
\nu(a^{\downarrow}+\delta)\in{\mathrm{conv}}\{\, \nu b_{\sigma}^{\downarrow} \ : \ \sigma\in\mathfrak{S}_n\,\}.
$$
Therefore
$
a^{\downarrow} \in {\mathrm{conv}}\{\, \eps b_{\sigma}^{\downarrow} \ : \ \eps\in\trE_n, \ \sigma\in\mathfrak{S}_n\,\}.
$
Conversely, each vector $\eps b_{\sigma}^{\downarrow}$ is weakly majorized by $b^{\downarrow}$, and weak majorization is preserved under convex combinations. Hence the convex-hull condition implies $A\prec_w B$.
\end{proof}

For classical proofs of Ky Fan's principle, see \cite{Bha97,HP14,HJ91}; for the unitary-orbit viewpoint used here, see \cite{BL12,BL26a}.

\vskip 5pt\noindent
\begin{rem} Corollary \ref{KFprinciple} is Ky Fan's principle, a fundamental fact. The statement of Corollary \ref{wmajdiag} is somewhat original, and its proof is new. It is based on a systematic approach to majorization through inequalities related to unitary orbits. Stronger results than Corollary \ref{wmajorbit}, together with several other unitary-orbit results, will be introduced in Section~\ref{s:hybrid}.
\end{rem}
\vskip 5pt\noindent

\section{The operator right modulus and  triangle inequalities} \label{s:right}

In this section, we survey some triangle-type estimates for the operator right modulus. In contrast with Thompson's inequality, where unitary conjugations are allowed on the right-hand side, we compare
$
\left|\sum_kA_k\right|
$
directly with
$
\sum_k\left|A_k\right|.
$
Such direct comparisons are much more delicate: in general they fail without additional assumptions, suitable constants, or averaging over unitary orbits. We begin with elementary examples that explain these obstructions, and then review several positive results under natural hypotheses.

\subsection{Operator version} 
As we know, the operator inequality
\[
|A+B|\le |A|+|B|, \qquad A,B\in \mathbb{M}_n,
\]
does not hold in general; for example, consider
$
A=
\begin{pmatrix}
	1&1\\
	0&0
\end{pmatrix},
$ and $
B=
\begin{pmatrix}
	1&-1\\
	0&0
\end{pmatrix}.
$
This elementary example explains why the unitary conjugations in
Thompson's triangle inequality are indispensable.

Naturally, one may ask whether there exists a finite constant \(C(n)\) such that
\[
|A+B|
\le
C(n)\bigl(|A|+|B|\bigr),
\qquad A,B\in\mathbb{M}_n.
\]
Unfortunately, no such finite constant exists when \(n\ge 2\); equivalently,
$
C(n)=\infty.
$
Indeed, it suffices to consider the matrices
\[
A_t=
\begin{pmatrix}
	1&t\\
	0&0
\end{pmatrix},
\qquad
B_t=
\begin{pmatrix}
	1&-t\\
	0&0
\end{pmatrix},
\qquad t>0.
\]
A direct computation gives
\[
|A_t+B_t|=
\begin{pmatrix}2&0\\0&0\end{pmatrix},
\qquad
|A_t|+|B_t|=\frac{2}{\sqrt{1+t^2}}
\begin{pmatrix}1&0\\0&t^2\end{pmatrix}.
\]
Thus any such constant must satisfy $C(n)\ge\sqrt{1+t^2}$ for all $t>0$.

If the operators are invertible, however, the situation changes substantially.
Before presenting the corresponding results, we recall that the condition number
of an invertible matrix \(A\) is defined by
\[
\kappa(A):=\|A\|_\infty\|A^{-1}\|_\infty .
\]

Lee \cite{Lee12} proved the following condition-number-dependent triangle
inequality, in which the constant depends only on a common upper bound
\(\omega\) for the condition numbers of the matrices. Zhang \cite{Zha26a}
later showed, by an explicit example, that this dependence on \(\omega\) is
sharp.
 \vskip 5pt\noindent
\begin{thm}[{\cite{Lee12,Zha26a}}]\label{thm:Lee-triangle-condition-number}
 	Let \(A_1,\dots,A_m\in \mathbb M_n\) be invertible matrices whose condition numbers are bounded by \(\omega\ge1\). Then
 	\begin{equation}\label{eq:Lee}
 		\absop{A_1+\cdots+A_m}
 		\le
 		\frac{\omega+1}{2\sqrt{\omega}}\,
 		\bigl(\absop{A_1}+\cdots+\absop{A_m}\bigr),
 	\end{equation}
 	where the constant is sharp as a function of $\omega$.
 \end{thm}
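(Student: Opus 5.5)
The plan is to reduce \eqref{eq:Lee} to a single-matrix estimate, exactly as in the proof of Theorem~\ref{thT}, and then to settle that estimate with the Kantorovich inequality. Let $S=A_1+\cdots+A_m$ and write a polar decomposition $S=Q|S|$ with $Q$ unitary. Put $W=Q^*$. As in Thompson's argument,
\[
|S|=WS=\sum_{k=1}^m \Ree(WA_k),
\]
since the left-hand side is Hermitian. It therefore suffices to prove the following claim: for every unitary $W$ and every invertible $A$ with $\kappa(A)\le\omega$,
\[
\Ree(WA)\le \frac{\omega+1}{2\sqrt\omega}\,|A| .
\]
Summing this over $k$ gives \eqref{eq:Lee}, with no unitary conjugations on the right-hand side.

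To prove the claim, write $A=V|A|$ with $V$ unitary. Set $Z=WV$ (unitary) and $P=|A|$. The spectrum of $P$ lies in $[\mu,M]$, where $M=\|A\|_\infty$, $\mu=\|A^{-1}\|_\infty^{-1}$ and $M/\mu=\kappa(A)$. Since $P$ is invertible and positive, congruence by $P^{-1/2}$ turns the claim into
\[
\Ree\bigl(P^{-1/2}ZP^{1/2}\bigr)\le c\,I, \qquad c:=\frac{\omega+1}{2\sqrt\omega}.
\]
For a unit vector $x$, put $u=P^{-1/2}x$ and $v=P^{1/2}x$. By Cauchy--Schwarz and $\|Zv\|=\|v\|$,
\[
\Ree\langle x,P^{-1/2}ZP^{1/2}x\rangle=\Ree\langle u,Zv\rangle\le \|u\|\,\|v\|=\bigl(\langle x,P^{-1}x\rangle\langle x,Px\rangle\bigr)^{1/2}.
\]
The Kantorovich inequality bounds the last quantity by $(M+\mu)/(2\sqrt{M\mu})=(\kappa+1)/(2\sqrt\kappa)$ with $\kappa=\kappa(A)$. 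The function $t\mapsto (t+1)/(2\sqrt t)$ is nondecreasing on $[1,\infty)$, so this is at most $c$. This is the only nontrivial analytic input. If one wants it self-contained, diagonalize $P$ and use $(p-\mu)(M-p)\ge0$, i.e.\ $p+\mu M/p\le M+\mu$, followed by AM--GM.

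The remaining issue is sharpness, which I expect to be the main obstacle, since it needs an explicit extremal configuration. For $m=1$ the inequality is trivial with constant $1$, so at least two summands are needed. I would take $n=m=2$, $P=\mathrm{diag}(1,\omega)$, $A_1=P$ and $A_2=ZP$ with $Z$ unitary, so both condition numbers equal $\omega$. The aim is to make every inequality in the chain above nearly tight on one vector. Take the Kantorovich extremal vector $x=\frac{1}{\sqrt2}(1,1)$ and choose $Z$ so that $Zv$ is parallel to $u$, where $u=P^{-1/2}x$ and $v=P^{1/2}x$ as before. Then test the resulting $2\times2$ matrices $|A_1+A_2|$ and $|A_1|+|A_2|$ against a suitable vector, or compare $\|(|A_1|+|A_2|)^{-1/2}|A_1+A_2|(|A_1|+|A_2|)^{-1/2}\|_\infty$ with $c$. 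The point to check is whether this forces the ratio to $c$, or only approaches it after adding further copies or a limiting parameter. I would try this direct $2\times2$ computation first; if it falls short, I would follow the explicit example of Zhang \cite{Zha26a}.
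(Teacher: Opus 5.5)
Your proof of the inequality is correct. The polar-decomposition reduction to $\Ree(WA)\le c\,|A|$, the congruence by $|A|^{-1/2}$, Cauchy--Schwarz and Kantorovich all check out. Because each summand gets its own constant $(\kappa_k+1)/(2\sqrt{\kappa_k})$, the same argument also proves Theorem~\ref{thm:Zhang-triangle-condition-number}. The paper gives no proof here and only cites \cite{Lee12,Zha26a}, so there is nothing internal to compare against.

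The gap is the sharpness claim. It is part of the statement, and you leave it unproved. Worse, the configuration you propose cannot reach $c$. Suppose $|A_1|=|A_2|=P$ and $\langle y,|S|y\rangle=c\,\langle y,(|A_1|+|A_2|)y\rangle$ for some $y\neq0$. Then every step of your chain is an equality for both summands, with $W$ the polar factor of the sum (not $W=I$, which is what your vector-matching implicitly uses). Cauchy--Schwarz equality forces $WA_1y=\alpha y=WA_2y$ with the same $\alpha=\|Py\|/\|y\|$. Hence $A_1y=A_2y$ and $|S|y=2\alpha y$. Then
\[
4\alpha^2y=S^*Sy=2\bigl(A_1^*A_1+A_2^*A_1\bigr)y=2\bigl(A_1^*A_1+A_2^*A_2\bigr)y=4P^2y,
\]
so $y$ is an eigenvector of $P$. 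In that case $P^{1/2}y$ cannot be a Kantorovich extremal vector when $\omega>1$. By compactness of the unitary group, the best constant over your family $A_1=P$, $A_2=ZP$ is therefore strictly below $c$. Adding more copies with the same modulus fails for the same reason.

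The equality analysis tells you what to do instead: the summands need different moduli, related by a reflection that fixes the test vector. With $D=\mathrm{diag}(1,-1)$, take
\[
A_1=\begin{pmatrix}\sqrt\omega&1-\omega\\0&-\sqrt\omega\end{pmatrix},\qquad
A_2=DA_1D=\begin{pmatrix}\sqrt\omega&\omega-1\\0&-\sqrt\omega\end{pmatrix}.
\]
Both matrices have singular values $1$ and $\omega$, so $\kappa(A_1)=\kappa(A_2)=\omega$. Their sum is $A_1+A_2=2\sqrt\omega\,D$, so $|A_1+A_2|=2\sqrt\omega\,I$.

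Since $\det(A_1^*A_1)=\omega^2$ and $\Tr(A_1^*A_1)=1+\omega^2$, one gets $|A_1|=(A_1^*A_1+\omega I)/(1+\omega)$. Hence $\langle e_1,|A_1|e_1\rangle=\langle e_1,|A_2|e_1\rangle=2\omega/(1+\omega)$.

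Testing $|A_1+A_2|\le c'(|A_1|+|A_2|)$ at $e_1$ gives $2\sqrt\omega\le 4c'\omega/(1+\omega)$, that is, $c'\ge(\omega+1)/(2\sqrt\omega)$. So the constant is attained exactly for every $\omega\ge1$. For $n>2$, replace $A_k$ by $A_k\oplus I_{n-2}$.
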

\vskip 5pt\noindent
 
Moreover, Zhang \cite{Zha26a} obtained the following more refined version of \eqref{eq:Lee} in terms of the condition numbers.
\vskip 5pt\noindent
\begin{thm}[\cite{Zha26a}]\label{thm:Zhang-triangle-condition-number}
	Let \(A_1,\dots,A_m\in \mathbb M_n\) be invertible matrices whose condition numbers are $\kappa_1,\ldots,\kappa_m$.
	Then
	\[
	\bigl|A_1+\cdots+A_m\bigr|
	\le
	\frac{\kappa_1+1}{2\sqrt{\kappa_1}}\,
	|A_1|+\cdots+ 	\frac{\kappa_m+1}{2\sqrt{\kappa_m}}\,|A_m|,
	\]
	where the coefficient function
	$
	c(\kappa):=\frac{\kappa+1}{2\sqrt{\kappa}}
	$
	is individually sharp for each term: for every \(\kappa\ge 1\), the coefficient \(c(\kappa)\)
	in front of a term of condition number \(\kappa\) cannot be replaced by any smaller constant,
	even if all the other coefficients are kept unchanged.
\end{thm}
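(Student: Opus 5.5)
The plan is to follow the proof of Theorem~\ref{thT}, replacing the Fan--Hoffman lemma (which needs unitary conjugations) by a \emph{direct} operator bound for the real part with a constant that depends on the condition number.

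First, the reduction. Let $S=A_1+\cdots+A_m$ and take a polar decomposition $S=Q|S|$ with $Q$ unitary. Setting $W=Q^*$, the matrix $WS=|S|$ is Hermitian, so
\[
|S|=\sum_{k=1}^m \Ree(WA_k).
\]
It therefore suffices to prove the following single-term lemma. If $A$ is invertible with $\kappa(A)=\kappa$ and $W$ is any unitary, then
\[
\Ree(WA)\le \frac{\kappa+1}{2\sqrt\kappa}\,|A|.
\]
Summing the lemma over $k$ gives the inequality of the theorem. Taking all $\kappa_k\le\omega$ and noting that $c$ is nondecreasing on $[1,\infty)$ also recovers Theorem~\ref{thm:Lee-triangle-condition-number}.

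Second, the proof of the lemma, which is the heart of the matter. Write $A=V|A|$ and $P=|A|$. The spectrum of $P$ lies in $[m,M]$ with $M/m=\kappa$, and $WA=UP$ with $U=WV$ unitary. The desired inequality is equivalent to
\[
\Ree\langle x,UPx\rangle\le c(\kappa)\langle x,Px\rangle \quad\text{for all } x.
\]
By Cauchy--Schwarz, $\Ree\langle x,UPx\rangle\le\|x\|\,\|Px\|$, so it is enough to establish the Kantorovich-type estimate
\[
\|x\|\,\|Px\|\le \frac{M+m}{2\sqrt{Mm}}\langle x,Px\rangle .
\]
To prove it, start from $(M-P)(P-m)\ge0$, which gives $P^2\le (M+m)P-MmI$, that is,
\[
\|Px\|^2+Mm\|x\|^2\le (M+m)\langle x,Px\rangle .
\]
The AM--GM inequality bounds the left-hand side below by $2\sqrt{Mm}\,\|x\|\,\|Px\|$. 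Since $(M+m)/(2\sqrt{Mm})=c(\kappa)$, this proves the estimate and hence the lemma.

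Finally, sharpness, which I expect to be the delicate part. One needs examples in which both inequalities above are nearly equalities. Equality in AM--GM requires $\|Px\|=\sqrt{Mm}\,\|x\|$, so $x$ should be balanced between the eigenvectors of $P$ for $m$ and $M$. Equality in Cauchy--Schwarz requires $UPx$ to be a positive multiple of $x$. For a given $\kappa$, I would work in $\bM_2$ with $P=\mathrm{diag}(1,\kappa)$ and $x=(\sqrt\kappa,1)$, choosing $U$ to rotate $Px$ onto the direction of $x$. This makes $A_1=UP$ saturate the lemma in the direction $x$.

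The other summands must then be chosen so that $S$ has $x$ as its polar direction. For instance, take the remaining $A_k$ to be small multiples of unitaries, or of matrices with $\kappa_k$ prescribed, arranged so that $S$ is essentially $A_1$ with $WS=|S|$ and $W=U^*$. Evaluating $\langle x,\cdot\,x\rangle$ on both sides then shows that replacing $c(\kappa_1)$ by any smaller constant fails, while the other coefficients are left unchanged. Arranging these extra summands compatibly is the step I expect to require the most care.
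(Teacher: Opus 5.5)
Your proof of the inequality is correct. The polar reduction $|S|=\sum_k\Ree(WA_k)$ is combined with the per-term bound $\Ree(WA)\le c(\kappa)|A|$, which you obtain from Cauchy--Schwarz and the Kantorovich-type estimate $\|x\|\,\|Px\|\le c(\kappa)\langle x,Px\rangle$. This is the right mechanism: Thompson's argument with Fan--Hoffman replaced by a direct bound. The survey does not reproduce the argument of \cite{Zha26a}, so there is no proof in the paper to compare against.

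The gap is in the sharpness part, and the construction you describe cannot work. If $A_2,\dots,A_m$ are small, then $|S|\to|A_1|$ by continuity of the modulus. On the other hand, for any $c'\in(1,c(\kappa))$ one has $c'|A_1|-|A_1|\ge(c'-1)\lambda_n^{\downarrow}(|A_1|)\,I>0$. So the inequality with $c'$ in front of $|A_1|$ survives every sufficiently small choice of the other summands, whatever their coefficients.

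Your choice $W=U^*$ shows the same defect algebraically. Since $U^*A_1=P$, you get $\Ree(WA_1)=|A_1|$, and the ratio $\Ree\langle x,WA_1x\rangle/\langle x,|A_1|x\rangle$ is $1$, not $c(\kappa)$. For $A_1=UP$ with $U$ rotating $Px$ onto $x$, saturating your lemma forces $W=I$, i.e.\ $S\ge0$, and such an $S$ is far from $A_1$. Hence the companions cannot be small. They must move the polar factor of $S$ by a definite amount. They must also saturate their own lemma at the same $x$ with the same $W$, since any slack there eats the gain $(c(\kappa)-c')\langle x,|A_1|x\rangle$.

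A working choice is an invertible analogue of the matrices $A_t,B_t$ used to show $C(n)=\infty$. With $\alpha\ge0$ and $\lambda>0$, take
\[
A_1=\begin{pmatrix}1&\alpha\\0&1\end{pmatrix},\qquad
A_2=\begin{pmatrix}\lambda&-\alpha\\0&\lambda\end{pmatrix},\qquad
A_1+A_2=(1+\lambda)I,
\]
and test on $x=e_1$.
\begin{itemize}
\item Here $W=I$, each $A_ke_1$ is a positive multiple of $e_1$, and $\|A_ke_1\|^2=\det A_k$, so $e_1$ is balanced for both.
\item A direct computation gives $\langle e_1,|A_1|e_1\rangle=1/c(\kappa_1)$ and $\langle e_1,|A_2|e_1\rangle=\lambda/c(\kappa_2)$, where $c(\kappa_1)=\sqrt{1+\alpha^2/4}$ and $c(\kappa_2)=\sqrt{1+\alpha^2/(4\lambda^2)}$.
\item Thus $\langle e_1,|A_1+A_2|e_1\rangle=1+\lambda$. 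The right-hand side with $c'$ in place of $c(\kappa_1)$, and $c(\kappa_2)$ unchanged, gives $c'/c(\kappa_1)+\lambda<1+\lambda$.
\end{itemize}
Every $\kappa_1\ge1$ arises from some $\alpha$, and varying $\lambda$ lets you prescribe $\kappa_2$. Splitting $A_2$ into $m-1$ equal pieces handles $m>2$. The direct sums $A_1\oplus I_{n-2}$ and $A_2\oplus\lambda I_{n-2}$ handle $n>2$ without changing the condition numbers.
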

\vskip 5pt\noindent

If one allows a single unitary conjugation on the right-hand side, Bourin and
Lee \cite{BL26e} obtained the following direct comparison between $|A+B|$
and $|A|+|B|$ for normal matrices.

\vskip 5pt\noindent
\begin{prop}[\cite{BL26e}]\label{cornorm} Let $A,B\in\bM_n$ be normal. Then, for some unitary $V\in\bM_n$,
	$$
	|A+ B| \le |A|+|B| +\frac{1}{4} V(|A|+|B|)V^*.
	$$
	If $n\ge 3$, the constant $1/4$ is the smallest possible one.
\end{prop}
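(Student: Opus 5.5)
The plan is to reduce the statement to a $2\times 2$ block positivity fact and then use an elementary operator version of the scalar inequality $s\le p+s^2/(4p)$.

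\emph{Step 1 (block positivity).} For any $Z\in\bM_n$ the block matrix
$$
\begin{pmatrix} |Z| & Z^* \\ Z & |Z^*|\end{pmatrix}
$$
is positive semidefinite. This follows from the polar decomposition $Z=U|Z|$ with $U$ unitary, since the matrix then factors as $\begin{pmatrix} I\\ U\end{pmatrix}|Z|\begin{pmatrix} I & U^*\end{pmatrix}$. For normal $A$ and $B$ we have $|A^*|=|A|$ and $|B^*|=|B|$. Summing the two blocks, and writing $T:=A+B$ and $P:=|A|+|B|$, we get
$$
\begin{pmatrix} P & T^* \\ T & P\end{pmatrix}\ge 0 .
$$
By a standard perturbation ($P\to P+\eps I$, then let $\eps\to0$ and use compactness of the unitary group), we may assume that $P$ is invertible. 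Positivity of the block matrix then means $T=P^{1/2}KP^{1/2}$ for some contraction $K$.

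\emph{Step 2 (operator AM--GM).} Expanding the positive operator
$$
\bigl(P^{1/2}-\tfrac12 P^{-1/2}|T|\bigr)^*\bigl(P^{1/2}-\tfrac12 P^{-1/2}|T|\bigr)\ge 0
$$
gives
$$
|T|\le P+\tfrac14\,|T|P^{-1}|T|.
$$

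\emph{Step 3 (unitary orbit comparison).} It remains to show that $|T|P^{-1}|T|\le VPV^*$ for some unitary $V$. We have
$$
|T|P^{-1}|T|=(P^{-1/2}|T|)^*(P^{-1/2}|T|)\simeq P^{-1/2}|T|^2P^{-1/2},
$$
and
$$
P^{-1/2}|T|^2P^{-1/2}=P^{-1/2}T^*TP^{-1/2}=K^*PK .
$$
Next, $K^*PK=(P^{1/2}K)^*(P^{1/2}K)\simeq P^{1/2}KK^*P^{1/2}\le P$, because $KK^*\le I$. Thus the eigenvalues of $|T|P^{-1}|T|$ are dominated one by one by those of $P$. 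By the unitary orbit order recalled in Section~\ref{s:thompson}, this gives $|T|P^{-1}|T|\le VPV^*$ for some unitary $V$, which proves the inequality.

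\emph{Step 4 (sharpness for $n\ge3$).} This is where I expect the main difficulty; the inequality itself is routine once Steps 1--3 are in place. One needs normal $A,B$ for which $|A+B|-(|A|+|B|)$ has a positive eigenvalue close to $\frac14$ of the top eigenvalue of $|A|+|B|$. The equality case of Step 2 requires $|T|\approx 2P$ on some vector, which cannot happen where $|T|$ and $P$ are comparable in the commuting sense. So the example must exploit noncommutativity.

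I would first try $A$ a Hermitian unitary, or a rank-one projection, and $B$ a suitably rotated unitary or a scaled projection in $\bM_3$, with a small parameter. The aim is to tune things so that $P$ has a tiny eigenvalue in a direction where $|T|$ is of order $1$, while the largest eigenvalue of $P$ stays of order $2$. Then I would compute the extremal eigenvalue ratio as the parameter tends to $0$. The need for three dimensions suggests using one extra coordinate to decouple the small eigenvalue of $P$ from its large one. Verifying that the resulting ratio indeed tends to $\frac14$ is the computation I would expect to be hardest.
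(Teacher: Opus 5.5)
Your Steps 1--3 are correct and give a complete proof of the operator inequality. The block positivity uses $|A^*|=|A|$ and $|B^*|=|B|$; the completed square gives $|T|\le P+\frac14|T|P^{-1}|T|$; the chain $|T|P^{-1}|T|\simeq K^*PK\simeq P^{1/2}KK^*P^{1/2}\le P$ holds; and the $\varepsilon$-perturbation is legitimate. The survey only quotes this result from \cite{BL26e}, so there is no in-paper proof to compare with. Your argument, run with unequal diagonal blocks, even gives $|A+B|\le|A|+|B|+\frac14V(|A^*|+|B^*|)V^*$ for arbitrary $A,B$.

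The genuine gap is the second half of the statement. Optimality of $1/4$ is asserted, and Step 4 contains no example and no computation, only a search plan. That plan also points the wrong way. Write $T=W|T|$ with $W$ unitary and take $\|h\|=1$. Testing the positive block matrix of Step 1 on $(h,-tWh)$ and optimizing in $t>0$ gives $\langle h,|T|h\rangle\le(\langle h,Ph\rangle\,\lambda_1^{\downarrow}(P))^{1/2}$, hence
\[
\langle h,(|T|-P)h\rangle\le \sqrt{a\,\lambda_1^{\downarrow}(P)}-a,\qquad a:=\langle h,Ph\rangle .
\]
So a direction $h$ in which $P$ is tiny contributes almost nothing. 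The value $\frac14\lambda_1^{\downarrow}(P)$ can only be approached with $a\approx\frac14\lambda_1^{\downarrow}(P)$ and $\langle h,|T|h\rangle\approx\frac12\lambda_1^{\downarrow}(P)$. Neither a third dimension nor noncommutativity between $|T|$ and $P$ is needed: in the example below $|T|$ is scalar.

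To close the gap, take the real symmetric matrices
\[
A=\begin{pmatrix}4&2\\2&1\end{pmatrix}\ge0,\qquad B=\begin{pmatrix}-4&2\\2&-1\end{pmatrix}\le0 .
\]
Then $|A|+|B|=A-B=\mathrm{diag}(8,2)$. Also $A+B=\begin{pmatrix}0&4\\4&0\end{pmatrix}$, so $|A+B|=4I$. Hence $|A+B|-|A|-|B|=\mathrm{diag}(-4,2)$.

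Suppose $|A+B|\le|A|+|B|+cV(|A|+|B|)V^*$ for some unitary $V$. Comparing largest eigenvalues gives $2\le c\,\lambda_1^{\downarrow}(|A|+|B|)=8c$, that is, $c\ge\frac14$. Replacing $A,B$ by the normal matrices $A\oplus 0_{n-2}$ and $B\oplus 0_{n-2}$ gives the same obstruction in every $\bM_n$ with $n\ge2$, in particular for $n\ge3$.
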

\vskip 5pt\noindent

It is natural to ask what remains true if the normality assumption in
Proposition~\ref{cornorm} is removed and arbitrary complex matrices are
allowed. More precisely, we pose the following problem.

\begin{prob}
	Let \(n\ge 2\). Determine all pairs
$
	\bigl(C_1(n),C_2(n)\bigr)\in[0,\infty)^2
$
	with the following property: for every \(A,B\in\mathbb{M}_n\), there
	exists a unitary matrix \(V\in\mathbb{M}_n\) such that
	\[
	|A+B|
	\le
	C_1(n)\bigl(|A|+|B|\bigr)
	+
	C_2(n)V\bigl(|A|+|B|\bigr)V^*.
	\]
\end{prob}
Similarly, allowing two unitary conjugations on the right-hand side leads to the following natural variant.
\begin{prob}\label{prob:abs-two-unitries}
	Let \(n\ge 2\). Determine all pairs
	$
	\bigl(C_1(n),C_2(n)\bigr)\in[0,\infty)^2
	$
	with the following property: for every \(A,B\in\mathbb{M}_n\), there
	exist unitary matrices \(U, V\in\mathbb{M}_n\) such that
	\[
	|A+B|
	\le
	C_1(n)U\bigl(|A|+|B|\bigr)U^*
	+
	C_2(n)V\bigl(|A|+|B|\bigr)V^*.
	\]
\end{prob}
It follows easily from Thompson's inequality that every pair in $[1,\infty)^2$ has the property stated in Problem~\ref{prob:abs-two-unitries}.

More generally, allowing an average over \(n\) unitary conjugates, where
\(n\) is the size of the matrices, Bourin and Lee \cite{BL26a}
established the following comparison.

\vskip 5pt\noindent
\begin{thm}[\cite{BL26a}] If $A_1,\ldots, A_m\in\bM_n$ are normal matrices,  then,  for some unitary matrices $V_1,\ldots, V_n\in\bM_n$,
	$$
	\left|\sum_{k=1}^m A_k\right| \le \frac{1}{n}   \sum_{i=1}^nV_i\left\{\sum_{k=1}^m \left|A_k\right|\right\}V_i^*.
	$$
\end{thm}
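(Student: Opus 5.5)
The plan is to prove the statement in three steps. First, a block-matrix estimate gives $|\sum_k A_k|\le \frac12(P+W^*PW)$, where $P:=\sum_k|A_k|$. This yields weak majorization $|\sum_kA_k|\prec_w P$. Second, weak majorization is reduced to majorization, as in Corollary~\ref{wmajorbit}. Third, an $n$-term averaging lemma is derived from the Schur--Horn theorem together with the Fourier representation of the diagonal pinching.

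\textbf{Step 1.} If $A$ is normal, then $|A^*|=|A|$. Writing $A=U|A|$ with $U$ unitary commuting with $|A|$, we get
\[
\begin{pmatrix} |A| & A^* \\ A & |A|\end{pmatrix}
=\begin{pmatrix} |A|^{1/2} \\ U|A|^{1/2}\end{pmatrix}\begin{pmatrix} |A|^{1/2} & |A|^{1/2}U^*\end{pmatrix}\ge 0 .
\]
Summing over $k$ and setting $Z=\sum_k A_k$ gives $\begin{pmatrix} P & Z^*\\ Z & P\end{pmatrix}\ge 0$.

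Take a polar decomposition $Z=W|Z|$ with $W$ unitary, and compress by the isometry $J=\frac{1}{\sqrt2}\begin{pmatrix} I\\ -W\end{pmatrix}$. This gives
\[
0\le \tfrac12\bigl(P+W^*PW\bigr)-\Ree(W^*Z),
\]
and $\Ree(W^*Z)=|Z|$. Hence $|Z|\le \frac12(P+W^*PW)$. By subadditivity of the Ky Fan functionals \eqref{sumkeig}, this implies $|Z|\prec_w P$.

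\textbf{Step 2.} As in the proof of Corollary~\ref{wmajorbit}, lower the smallest eigenvalue of $P$ to obtain $0\le P'\le P$ with $|Z|\prec P'$. It then suffices to show that
\[
|Z|=\frac1n\sum_{i=1}^n V_iP'V_i^*
\]
for some unitaries $V_i$. Indeed, $V_iP'V_i^*\le V_iPV_i^*$, which gives the claim.

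\textbf{Step 3 (key lemma).} If $X\prec Q$ are Hermitian, then $X$ is an average of $n$ unitary conjugates of $Q$. After a unitary change of basis we may take $X$ diagonal. By Horn's converse of Schur's theorem (Proposition~\ref{Schur}), there is a Hermitian $R\simeq Q$ with $\Delta(R)=X$. Let $\omega=e^{2\pi i/n}$ and $D=\mathrm{diag}(1,\omega,\dots,\omega^{n-1})$. Then
\[
\Delta(R)=\frac1n\sum_{i=0}^{n-1}D^iRD^{-i},
\]
since the $(p,q)$ entry of the average is multiplied by $\frac1n\sum_i\omega^{i(p-q)}=\delta_{pq}$. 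Writing $R=U_0QU_0^*$ and $V_{i+1}=D^iU_0$ finishes the argument.

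The main obstacle is Horn's theorem, which is not proved earlier in the paper. I would either cite it (\cite{Bha97}) or prove it by the standard induction on $n$. One splits off a $2\times2$ rotation to realize one prescribed diagonal entry, using interlacing and the intermediate value theorem, and then applies the inductive hypothesis to the remaining $(n-1)\times(n-1)$ compression. Proposition~\ref{majconv} offers an alternative: it writes $x\in\mathrm{conv}\{q_\sigma\}$, but converting a general convex combination of permutations into a diagonal of a single conjugate still requires the Horn-type construction.
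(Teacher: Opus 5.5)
The survey gives no proof of this theorem; it only quotes it from \cite{BL26a}, so there is no internal argument to match. Your proof is correct, and the useful comparison is with Corollary~\ref{wmajorbit}.

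\begin{itemize}
\item \textbf{What the paper's machinery gives.} Corollary~\ref{wmajorbit} turns weak majorization into an orbit inequality using Proposition~\ref{majconv} and Carath\'eodory's theorem. That yields $n+1$ conjugates with uncontrolled weights, and no convex-hull argument of this type produces $n$ conjugates with equal weights $1/n$.
\item \textbf{What your Step~3 does instead.} You replace the convex hull of permuted eigenvalue vectors by Horn's theorem, which gives a single $R\simeq Q$ with $\Delta(R)=X$. You then write the pinching as the average $\frac1n\sum_{i=0}^{n-1}D^i(\cdot)D^{-i}$.
\item \textbf{Cost and gain.} The cost is Horn's theorem, which is deeper than anything proved in the paper. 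The gain is the general principle $X\prec_w Q\Rightarrow X\le\frac1n\sum_{i=1}^nV_iQV_i^*$ for Hermitian $X,Q$, a genuine strengthening of Corollary~\ref{wmajorbit}. Normality then enters only in Step~1: $|A_k^*|=|A_k|$ makes both diagonal blocks equal to $P$ and gives $|Z|\le\frac12(P+W^*PW)$.
\end{itemize}

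Three smaller points.

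First, the claim $0\le P'$ in Step~2 is false in general. For $n\ge2$, take $A_1=I$ and $A_2=-I$: then $Z=0$, $P=2I$ and $\mu=2-2n<0$. This is harmless, since Step~3 only needs Hermitian matrices and the last step only needs $P'\le P$.

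Second, Step~2 can be dropped altogether. The matrix $Y:=\frac12(P+W^*PW)$ has the same trace as $P$, and $Y\prec_w P$ by Ky Fan subadditivity, so $Y\prec P$. Applying Step~3 to $Y$ gives $|Z|\le Y=\frac1n\sum_{i=1}^nV_iPV_i^*$ directly.

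Third, if you prove Horn's theorem rather than cite it, the only delicate point is the choice of rotation plane; interlacing is not really needed. Sort $x$ and $\lambda$ decreasingly and rotate $\mathrm{diag}(\lambda)$ in the $(1,k)$-plane, where $\lambda_k\le x_1\le\lambda_{k-1}$. Then the remaining diagonal $(\lambda_2,\dots,\lambda_{k-1},\lambda_1+\lambda_k-x_1,\lambda_{k+1},\dots,\lambda_n)$ majorizes $(x_2,\dots,x_n)$, and the induction goes through.
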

\vskip 5pt\noindent

\subsection{Unitarily invariant norms}
	An interesting  result of Lee \cite{Lee12} gives the following bound for sums of matrices under any unitarily invariant norm (i.e., any symmetric norm, as we use both terminologies)  Zhang \cite{Zha26b} subsequently proved that this constant is sharp when \(n\ge m\).
 
\vskip 5pt\noindent
\begin{thm}[\cite{Lee12,Zha26b}]
	Let $A_1,\dots,A_m\in\mathbb M_n$. Then, for all symmetric norms,
	\begin{equation}\label{eq:Lee-uin}
		\Bigl\|\sum_{k=1}^m A_k\Bigr\|\ \le\ \sqrt{m}\,\Bigl\|\sum_{k=1}^m |A_k|\Bigr\|,
	\end{equation}
where the constant is sharp when $n\ge m$.
\end{thm}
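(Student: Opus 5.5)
The plan is to factor $\sum_k A_k$ through a tall block matrix, which produces the factor $\sqrt{m}$. That reduces the statement to a comparison between $\bigl(\sum_k|A_k|^2\bigr)^{1/2}$ and $\sum_k|A_k|$ for positive matrices. By Ky Fan's principle (Corollary~\ref{KFprinciple}) it is enough to prove a weak majorization, or equivalently the inequality for every symmetric norm.

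\emph{Step 1 (the factor $\sqrt m$).} Let $T\in\bM_{mn,n}$ be the block column with entries $A_1,\dots,A_m$, and let $E\in\bM_{mn,n}$ be the block column with all entries equal to $I$. Then $\sum_k A_k=E^*T$ and $\|E\|_\infty=\sqrt m$. The min--max principle for singular values, as in Lemma~\ref{lemFH}, gives $\lambda_j^{\downarrow}(|E^*T|)\le \|E\|_\infty\,\lambda_j^{\downarrow}(|T|)$. Moreover $|T|=\bigl(\sum_k|A_k|^2\bigr)^{1/2}$. Hence, for every symmetric norm,
\[
\Bigl\|\sum_k A_k\Bigr\|\le \sqrt m\,\Bigl\|\Bigl(\sum_k |A_k|^2\Bigr)^{1/2}\Bigr\|.
\]

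\emph{Step 2 (the main obstacle).} It remains to show that $\bigl\|(\sum_k P_k^2)^{1/2}\bigr\|\le\bigl\|\sum_k P_k\bigr\|$ for $P_k=|A_k|\ge0$. Let $Y$ be the block column with entries $P_k^{1/2}$, and set $G=YY^*\in\bM_{mn}^+$. The diagonal blocks of $G$ are exactly the $P_k$, so the block-diagonal pinching of $G$ is $D=P_1\oplus\cdots\oplus P_m$. With this notation,
\[
\sum_k P_k^2=Y^*DY,\qquad \sum_k P_k=Y^*Y .
\]
It follows that $\bigl(\sum_kP_k^2\bigr)^{1/2}$ has the same nonzero singular values as $D^{1/2}Y$. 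Likewise $\sum_kP_k$ has the same nonzero singular values as $G$, and $Y\simeq G^{1/2}$ up to unitary factors.

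Extending a symmetric norm on $\bM_n$ to $\bM_{mn}$ via its gauge function, I would then apply the Cauchy--Schwarz inequality for symmetric norms, $\|XZ\|\le\||X|^2\|^{1/2}\||Z|^2\|^{1/2}$, with $X=D^{1/2}$ and $Z=G^{1/2}$. This gives
\[
\|D^{1/2}G^{1/2}\|\le\|D\|^{1/2}\|G\|^{1/2}.
\]
The pinching inequality (Proposition~\ref{pinchtrace}, in its block version obtained by the same symmetry argument) gives $\|D\|\le\|G\|$. Combining the two yields $\bigl\|(\sum_kP_k^2)^{1/2}\bigr\|\le\|G\|=\bigl\|\sum_kP_k\bigr\|$.

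This is the delicate step. The Cauchy--Schwarz inequality for symmetric norms is standard but not stated earlier in the paper. If needed, I would derive it from Ky Fan $k$-norms via Proposition~\ref{KFN2}, the Cauchy--Schwarz inequality for the trace, and Corollary~\ref{KFprinciple}.

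\emph{Step 3 (sharpness for $n\ge m$).} Take $A_k=e_1e_k^*$ for $k=1,\dots,m$, where $e_1,\dots,e_n$ is the standard basis. Then $\sum_kA_k=e_1(e_1+\cdots+e_m)^*$, whose operator norm is $\sqrt m$. On the other hand $|A_k|=e_ke_k^*$, so $\sum_k|A_k|$ is a projection and has operator norm $1$. Hence equality holds in \eqref{eq:Lee-uin} for $\|\cdot\|_\infty$, and the constant $\sqrt m$ cannot be lowered in general.
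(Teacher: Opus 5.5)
Your proof is correct. The paper gives no proof of this theorem (it only cites \cite{Lee12,Zha26b}), so the relevant comparison is with the usual argument in the literature. Your Step~1 amounts to the operator inequality $|\sum_k A_k|^2=T^*EE^*T\le m\sum_k|A_k|^2$. The usual argument combines this with the Bourin--Uchiyama subadditivity inequality $\|f(\sum_k X_k)\|\le\|\sum_k f(X_k)\|$ (for nonnegative concave $f$ and $X_k\ge 0$), applied with $f(t)=t^{1/2}$ and $X_k=|A_k|^2$.

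Your Step~2 replaces that theorem by an elementary, self-contained argument for the single function $t^{1/2}$. It uses only the Gram matrix $G=YY^*$, block pinching (Proposition~\ref{pinchtrace}) and Cauchy--Schwarz. It even gives the intermediate bound $\|(\sum_kP_k^2)^{1/2}\|\le\|P_1\oplus\cdots\oplus P_m\|^{1/2}\,\|\sum_kP_k\|^{1/2}$, with the middle norm computed in $\bM_{mn}$.

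What the concave-function route buys is generality. Combined with $f(\sqrt m\,t)\le\sqrt m\,f(t)$, it yields Rotfel'd-type versions such as $\|f(|\sum_kA_k|)\|\le\sqrt m\,\|\sum_kf(|A_k|)\|$ for every nonnegative concave $f$. Your Cauchy--Schwarz trick does not reach these.

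One technical point needs care. Since $D$ may have rank up to $mn$, ``extending a symmetric norm to $\bM_{mn}$ via its gauge function'' requires a specific choice, e.g.\ applying $\Phi$ to the $n$ largest entries of $|x|$. The cleanest fix is to note that your target $\|(\sum_kP_k^2)^{1/2}\|\le\|\sum_kP_k\|$ is a weak majorization. By Corollary~\ref{KFprinciple}, it therefore suffices to run Step~2 with Ky Fan $k$-norms on $\bM_{mn}$, $k\le n$. The Cauchy--Schwarz step is then immediate from Proposition~\ref{KFN3}: $|\Tr_k V^*D^{1/2}G^{1/2}W|\le\|D^{1/2}V\|_2\,\|G^{1/2}W\|_2\le\|D\|_{(k)}^{1/2}\|G\|_{(k)}^{1/2}$.

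This reduction also makes your proposed derivation of Cauchy--Schwarz for all symmetric norms unnecessary. That derivation is not immediate anyway: the inequality is not of the form $\|M\|\le\|N\|$, so Corollary~\ref{KFprinciple} does not transfer it from Ky Fan norms directly.
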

\vskip 5pt\noindent
Zhang \cite{Zha26b} also established the following sharp estimate, which strengthens \eqref{eq:Lee-uin}.

\vskip 5pt\noindent
\begin{thm}[\cite{Zha26b}]\label{thm:Lee-optimal}
	Let $A_1,\dots,A_m\in\mathbb M_n$. Then,  for all symmetric norms,
	\begin{equation*}
		\Bigl\|\sum_{k=1}^m A_k\Bigr\|\ \le\ \sqrt{\min \{m,n\}}\,\Bigl\|\sum_{k=1}^m |A_k|\Bigr\|.
	\end{equation*}
	Moreover, the constant $\sqrt{\min\{m,n\}}$ is sharp.
\end{thm}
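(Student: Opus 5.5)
The plan is to factor the sum as a product of two block matrices and apply a Cauchy--Schwarz inequality for symmetric norms, which reduces the statement to comparing $\sum_k|A_k^*|$ with $P:=\sum_k|A_k|$. Write polar decompositions $A_k=U_k|A_k|$ with $U_k$ unitary. Let $X$ be the $n\times mn$ block row $\bigl(U_1|A_1|^{1/2},\ldots,U_m|A_m|^{1/2}\bigr)$ and $Y$ the $mn\times n$ block column with entries $|A_1|^{1/2},\ldots,|A_m|^{1/2}$. Then
\[
\sum_{k=1}^m A_k = XY,\qquad Y^*Y=P,\qquad XX^*=\sum_{k=1}^m U_k|A_k|U_k^*=\sum_{k=1}^m|A_k^*|=:Q .
\]
I would invoke the Cauchy--Schwarz inequality for symmetric norms, $\|XY\|^2\le\|XX^*\|\,\|Y^*Y\|$. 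It is classical, and I take it as known for this plan (padding $X,Y$ with zeros to square size if needed). It gives
\[
\Bigl\|\sum_k A_k\Bigr\|^2\le \|Q\|\,\|P\|,
\]
so it suffices to prove $\|Q\|\le \min\{m,n\}\,\|P\|$ for all symmetric norms.

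\emph{Bound by $m$.} Each $|A_k^*|\simeq|A_k|\le P$. Symmetric norms respect the order on $\bM_n^+$ (Section~\ref{s:symmetric-norms}), so $\||A_k^*|\|\le\|P\|$, and the triangle inequality gives $\|Q\|\le m\|P\|$.

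\emph{Bound by $n$.} Use Ky Fan's principle (Corollary~\ref{KFprinciple}) and check the Ky Fan $k$-norms. Since $\Tr Q=\Tr P$ and $\lambda_j^{\downarrow}(P)\le\lambda_1^{\downarrow}(P)$, for each $k$ we get
\[
\sum_{j\le k}\lambda_j^{\downarrow}(Q)\le \Tr Q=\Tr P\le \frac{n}{k}\sum_{j\le k}\lambda_j^{\downarrow}(P)\le n\sum_{j\le k}\lambda_j^{\downarrow}(P).
\]
Hence $Q\prec_w nP$ and $\|Q\|\le n\|P\|$. Combining the two bounds yields $\|\sum_k A_k\|\le\sqrt{\min\{m,n\}}\,\|\sum_k|A_k|\|$.

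\emph{Sharpness.} Let $r=\min\{m,n\}$ and $E_{ij}$ denote the matrix units. Take $A_k=E_{1k}$ for $k\le r$ and $A_k=0$ otherwise. Then $\sum_k A_k$ is a rank-one matrix whose first row has $r$ ones, so its only nonzero singular value is $\sqrt r$. On the other hand $\sum_k|A_k|=\sum_{k\le r}E_{kk}$ is a projection. In the operator norm the ratio is exactly $\sqrt r$, so the constant cannot be lowered.

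The only step that is not a direct computation is the Cauchy--Schwarz inequality $\|XY\|^2\le\|XX^*\|\,\|Y^*Y\|$ for symmetric norms. It is not proved earlier in the paper, so if it must be established in place, that is where the work lies. I would derive it from the weak majorization $s(XY)\prec_w s(X)s(Y)$ (Horn's inequality) followed by the Cauchy--Schwarz inequality for symmetric gauge functions. The rest is bookkeeping.
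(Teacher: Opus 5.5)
The survey gives no proof of this theorem; it only quotes it from \cite{Zha26b}, so there is no in-paper argument to compare yours with. Taken on its own, your proof is correct and complete.

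The factorization is right: with $U_k$ unitary, $U_k|A_k|U_k^*=|A_k^*|$. The $m$-bound and the sharpness example are also right. For the Cauchy--Schwarz step, use the route in your last paragraph rather than padding. Both $X$ and $Y$ have exactly $n$ singular values, with $s(X)^2=\lambda^{\downarrow}(Q)$ and $s(Y)^2=\lambda^{\downarrow}(P)$. Horn's log-majorization gives $s(XY)\prec_w s(X)s(Y)$. The H\"older inequality $\Phi(xy)\le\Phi(x^2)^{1/2}\Phi(y^2)^{1/2}$ for the gauge function $\Phi$ of the norm then yields $\|\sum_k A_k\|^2\le\|Q\|\,\|P\|$ entirely on $\bR^n$. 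Padding to $\bM_{mn}$ would also require extending the given norm from $\bM_n$ to $\bM_{mn}$, which is doable but needs a remark.

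There is one small slip in the $n$-bound. The inequality $\Tr P\le\frac{n}{k}\sum_{j\le k}\lambda_j^{\downarrow}(P)$ does not follow from $\lambda_j^{\downarrow}(P)\le\lambda_1^{\downarrow}(P)$, which only gives $\Tr P\le n\lambda_1^{\downarrow}(P)$. It follows instead from $\lambda_j^{\downarrow}(P)\le\lambda_k^{\downarrow}(P)\le\frac1k\sum_{i\le k}\lambda_i^{\downarrow}(P)$ for $j>k$. Either bound is enough for $Q\prec_w nP$, so the conclusion stands.

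Your route also gives more than the statement. The intermediate inequality $\|\sum_k A_k\|^2\le\|\sum_k|A_k^*|\|\,\|\sum_k|A_k|\|$ treats left and right moduli symmetrically. It therefore yields the upper bounds of Theorems~\ref{thm:sum-vs-qsym} and~\ref{thm:sum-vs-sym} immediately:
\begin{itemize}
\item From $|A_k|^2,\,|A_k^*|^2\le 2|A_k|_{\qsym}^2$ and operator monotonicity of $t^{1/2}$, we get $|A_k|,|A_k^*|\le\sqrt2\,|A_k|_{\qsym}$.
\item Trivially, $|A_k|,|A_k^*|\le 2|A_k|_{\sym}$.
\end{itemize}
Monotonicity of symmetric norms on $\bM_n^+$ then gives $\|\sum_k A_k\|\le\sqrt2\,\|\sum_k|A_k|_{\qsym}\|$ and $\|\sum_k A_k\|\le 2\,\|\sum_k|A_k|_{\sym}\|$.
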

\vskip 5pt\noindent
\subsection{Schatten $p$-norms}
Lee \cite{Lee10} posed the problem of determining the optimal constant $c_p$ such that
\[
\|A+B\|_p\le c_p\,\||A|+|B|\|_p,\qquad \text{for all } A,B\in \mathbb M_n,
\]
which is now commonly referred to as \emph{Lee's problem}.
Lee's problem is notoriously difficult. The cases \(p=1\) and \(p=\infty\) are relatively easy to handle, whereas even the case \(p=2\) is already quite challenging; the remaining cases are still open. In particular, Lee \cite{Lee10} conjectured that
$c_2=\sqrt{\frac{1+\sqrt{2}}{2}}$.
This conjecture was first confirmed by Lin and Zhang \cite{LZ22}.
Subsequently, Zhang \cite{Zha25a} provided an alternative proof, and \cite{XZ25} further refined the estimate by deriving a sharper bound in which the constant is expressed more precisely in terms of the singular values of $A$ and $B$.

Motivated by Lee's problem, Tang and Zhang \cite{TZ26} proposed the following generalization for several matrices $A_k\in \mathbb M_n, 1\le k\le m$:
determine the optimal constant $c_p(m)$ depending only on $p$ and $m$ such that
\begin{equation}\label{eq:tang-zhang}
	\Bigl\lVert  \sum_{k=1}^m A_k\Bigr\rVert_p
	\ \le\ c_p(m)\,
	\Bigl\lVert \sum_{k=1}^m \bigl|A_k\bigr| \Bigr\rVert_p .
\end{equation}
For the above problem, Tang and Zhang~\cite{TZ26} stated several sharp values and general bounds. More precisely, they asserted that
\begin{enumerate}
	\item $c_{1}(m)=1$;
	\item $c_{2}(m)=\sqrt{\frac{1+\sqrt{m}}{2}}$, and they exhibited $m$ matrices of size $m\times m$ for which equality is attained in \eqref{eq:tang-zhang};
	\item $c_{\infty}(m)=\sqrt{m}$, and they exhibited $m$ matrices of size $m\times m$ for which equality is attained;
	\item $c_{p}(m)\le m^{\frac12-\frac{1}{2p}}$ for all $1\le p\le\infty$. 
\end{enumerate}
Since the constant $c_p(m)$ in \eqref{eq:tang-zhang} is required to work uniformly over the matrix size, the $m\times m$ extremizers in~\cite{TZ26} are sufficient to prove the optimality of any such dimension-independent constant. They do not, however, determine the best constant in each fixed dimension $n<m$. This motivates introducing the dimension-sensitive constant $c_p(m,n)$ and studying the sharper problem stated below.
\begin{prob}[\cite{Zha26b}]\label{prob:general-lee-problem}
	Let $1\le p\le \infty$ and let $A_1,\dots,A_m\in\mathbb M_n$. Determine the optimal constant $c_p^{\mathrm{abs}}(m,n)$ such that
	\[
	\Bigl\lVert  \sum_{k=1}^m A_k\Bigr\rVert_p
	\ \le\ c_p^{\mathrm{abs}}(m,n)\,
	\Bigl\lVert \sum_{k=1}^m \bigl|A_k\bigr| \Bigr\rVert_p .
	\]
\end{prob}

For Problem~\ref{prob:general-lee-problem}, Zhang \cite{Zha26b} proved the following dimension-dependent conclusions:
\begin{enumerate}
	\item $c_{1}^{\mathrm{abs}}(m,n)=1$;
	\item $c_{2}^{\mathrm{abs}}(m,n)=\sqrt{\frac{1+\sqrt{\min\{m,n\}}}{2}}$;
	\item $c_{\infty}^{\mathrm{abs}}(m,n)=\sqrt{\min\{m,n\}}$;
	\item $c_{p}^{\mathrm{abs}}(m,n)\le \bigl(\min\{m,n\}\bigr)^{\frac12-\frac{1}{2p}}$ for all $1\le p\le\infty$.
\end{enumerate}
In particular, these bounds are dimension-sensitive and, when $m>n$, they strictly sharpen the corresponding estimates in Tang and Zhang~\cite{TZ26}.
\section{The quadratic symmetric modulus and  triangle inequalities} \label{s:quadratic}

For a matrix $Z\in\bM_n$, its left and right moduli $|Z^*|$ and $|Z|$ occur in a number of important operator inequalities. There is no compelling reason to favor either the left or the right modulus. We will see that the consideration of symmetrized versions, or means of these two moduli, leads to several interesting results. In this section, we discuss the quadratic symmetric modulus.

The  quadratic symmetric modulus of $Z\in\bM_n$ is defined as
$$
|Z|_{\qsym}:=\sqrt{\frac{|Z|^2+|Z^*|^2}{2}}.
$$
Let ${\mathrm{Re\,}}Z=(Z+Z^*)/2$ and ${\mathrm{Im\,}}Z=(Z-Z^*)/2i$ denote the real and imaginary parts of $Z\in\bM_n$ and note that
 $$|Z|_{\qsym}=\sqrt{|{\mathrm{Re\,}}Z|^2+ |{\mathrm{Im\,}}Z|^2}.$$
Hence the quadratic symmetric modulus appears to be a very natural extension of the modulus in $\bC$.

\subsection{Operator version}
The following result is a striking symmetrized version of Thompson's inequality.

\vskip 5pt\noindent
\begin{thm}[{\cite{Zha26a}}]\label{thZ}  Let $A,B\in\bM_n$. Then there exist two unitaries $U,V\in\bM_n$ such that
$$
|A+B|_{\qsym} \le U|A|_{\qsym}U^* +V|B|_{\qsym}V^*.
$$
\end{thm}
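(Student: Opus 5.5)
The plan is to realize the quadratic symmetric modulus as an ordinary right modulus of a larger matrix, and then run Thompson's argument at that level. For $Z\in\bM_n$ set
\[
\hat Z:=\frac{1}{\sqrt2}\begin{pmatrix} Z \\ Z^*\end{pmatrix}\in\bM_{2n,n}.
\]
Then $\hat Z^*\hat Z=\tfrac12(Z^*Z+ZZ^*)$, so $|\hat Z|=|Z|_{\qsym}$. The map $Z\mapsto\hat Z$ is real-linear and $\widehat{A+B}=\hat A+\hat B$. Hence
\[
|A+B|_{\qsym}=|\hat A+\hat B|.
\]
We are thus reduced to a Thompson inequality for rectangular $2n\times n$ matrices, where the moduli live in $\bM_n$.

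First I would record that Theorem~\ref{thT} and Lemma~\ref{lemFH} remain valid for rectangular matrices $X,Y\in\bM_{2n,n}$. The statement to prove is that $|X+Y|\le U|X|U^*+V|Y|V^*$ for some unitaries $U,V\in\bM_n$. The proof is the same as before. Take a polar decomposition $X+Y=Q|X+Y|$ with $Q\in\bM_{2n,n}$ an isometry, which we may choose since $2n\ge n$, and put $W=Q^*\in\bM_{n,2n}$. Then $|X+Y|=WX+WY$, which is Hermitian, so
\[
|X+Y|=\Ree(WX)+\Ree(WY).
\]
Lemma~\ref{lemFH} applied to the square matrix $WX\in\bM_n$ gives $\Ree(WX)\le U|WX|U^*$. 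Since $W$ is a contraction, $|WX|^2=X^*W^*WX\le X^*X$. It follows that
\[
\lambda_j^{\downarrow}(|WX|)\le\lambda_j^{\downarrow}(|X|),
\]
so $|WX|\le U'|X|U'^*$ for some unitary $U'$ by the unitary orbit order. Composing the unitaries handles the $X$ term, and the same argument handles $Y$.

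Alternatively, I could avoid rectangular matrices by padding. Let $\tilde Z=\begin{pmatrix}\hat Z & 0\end{pmatrix}\in\bM_{2n}$, apply Theorem~\ref{thT} directly, and note that $|\tilde Z|=|Z|_{\qsym}\oplus 0$. The obstacle is that the unitaries given by Theorem~\ref{thT} live in $\bM_{2n}$, so we would have to compress to the first $n\times n$ block. Passing from the eigenvalue inequality for $(|A+B|_{\qsym}\oplus0)$ against $U(|A|_{\qsym}\oplus0)U^*+V(|B|_{\qsym}\oplus0)V^*$ back to $n\times n$ unitaries is not immediate, because a compression of a sum of two conjugates is not a sum of two conjugates within $\bM_n$. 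For this reason I prefer the rectangular version above, where this issue disappears.

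The main step to check carefully is therefore the contraction estimate $|WX|\prec|X|$ in the sense of eigenvalue order. This follows from $X^*W^*WX\le X^*X$ together with operator monotonicity of the square root, or directly from the eigenvalue order. With that in place, the theorem follows as $|A+B|_{\qsym}=|\hat A+\hat B|\le U|\hat A|U^*+V|\hat B|V^*=U|A|_{\qsym}U^*+V|B|_{\qsym}V^*$.
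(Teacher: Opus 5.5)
Your proof is correct. It uses the same embedding as the paper: the paper's choice $A_2=A_1^*$, $B_2=B_1^*$ in the block-column matrices is exactly your $\sqrt2\,\hat Z$, padded with a zero column block. The difference is in how the reduction from dimension $2n$ back to $n$ is done.

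The paper pads to square matrices in $\bM_{2n}$, applies Thompson's theorem there as a black box, and then compresses to the upper-left $n\times n$ corner. The obstacle you flagged for this route is removed by a one-line lemma proved at the start of the paper's argument. Compression is additive, and the corner of $U_0(P\oplus 0)U_0^*$ is $KPK^*$, where $K$ is the corner of $U_0$ and hence a contraction. Then $KPK^*=(KP^{1/2})(KP^{1/2})^*\simeq P^{1/2}K^*KP^{1/2}\le P$, so $KPK^*\le UPU^*$ for some unitary $U\in\bM_n$. The two terms are dominated separately, so one never needs the compressed sum to be a sum of two conjugates.

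You instead rerun Thompson's proof for rectangular $2n\times n$ matrices. The contraction then enters before Fan--Hoffman, as $W=Q^*$, rather than after Thompson, and it is absorbed via $|WX|^2=X^*W^*WX\le X^*X$.

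What each route buys:
\begin{itemize}
\item The paper's route is modular. Thompson is used as a black box, and the contraction lemma shows that Thompson's inequality with unitaries is equivalent to its version with contractions, which is of independent interest.
\item Your route is more self-contained. It directly yields a rectangular Thompson inequality, which covers the paper's intermediate statement for arbitrary blocks $A_1,A_2,B_1,B_2$.
\end{itemize}

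Three small points on your write-up:
\begin{itemize}
\item Operator monotonicity of the square root gives $|WX|\le|X|$ directly, so the detour through the eigenvalue order and $U'$ is unnecessary.
\item You do not need to extend $Q$ to an isometry. For the polar partial isometry, $Q^*Q|X+Y|=|X+Y|$ already holds and $Q^*$ is a contraction.
\item Avoid writing $\prec$ for the eigenvalue order, since the paper reserves that symbol for majorization.
\end{itemize}
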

\vskip 5pt\noindent

We adopt a proof of Bourin and Lee in \cite{BL26e}.
\begin{proof}[Proof of Theorem~\ref{thZ}] Note that \eqref{TI} 
	is equivalent to the existence of a pair of contractions $K,L\in\bM_n$ such that
	\begin{equation}\label{eqthomp2}
		|A+B| \le K|A|K^* +L|B|L^*.
	\end{equation}
	Clearly $\eqref{TI} \Rightarrow  \eqref{eqthomp2}$. Conversely,  $K|A|K^*=XX^*$ with $X=K|A|^{1/2}$. So $$K|A|K^*=U|A|^{1/2}K^*K|A|^{1/2}U^*$$ for some unitary $U\in\bM_n$. Since $K^*K\le I$, we have $K|A|K^*\le U|A|U^*$. Similarly, $L|B|L^*\le V|B|V^*$ for some unitary $V$. Hence  $\eqref{eqthomp2} \Rightarrow  \eqref{TI}$.

	Now, given four matrices $A_1,A_2,B_1,B_2\in\bM_n$ and letting
	$$
	A=\begin{bmatrix} A_1&0 \\ A_2&0\end{bmatrix}, \qquad B=\begin{bmatrix} B_1&0 \\ B_2&0\end{bmatrix}
	$$
	Thompson's theorem for $A$ and $B$ reads as
	$$
	\begin{bmatrix} \sqrt{|A_1 + B_1|^2 +|A_2+B_2|^2} & 0 \\ 0&0
	\end{bmatrix} \le
	U_0 \begin{bmatrix}\sqrt{|A_1|^2 +|A_2|^2} & 0 \\ 0&0 \end{bmatrix}  U_0 ^*
	+
	V_0 \begin{bmatrix}\sqrt{|B_1|^2 +|B_2|^2} & 0 \\ 0&0 \end{bmatrix} V_0 ^*
	$$
	for some unitary matrices $U_0,V_0\in\bM_{2n}$. Thus
	$$
	\sqrt{|A_1 + B_1|^2 +|A_2+B_2|^2} \le K\sqrt{|A_1|^2 +|A_2|^2} K^* + L\sqrt{|B_1|^2 +|B_2|^2} L^*
	$$
	for some contractions $K,L\in\bM_n$. By the first part of the proof, we have 
	$$
	\sqrt{|A_1 + B_1|^2 +|A_2+B_2|^2}\le U\sqrt{|A_1|^2 + |A_2|^2}U^* + V\sqrt{|B_1|^2 + |B_2|^2}V^* 
	$$
	for some unitaries $U,V\in\bM_n$. Then, following Zhang, we pick $A_2=A_1^*$ and $B_2=B_1^*$ to get
	$$
	\sqrt{|A_1 + B_1|^2 +|(A_1+B_1)^*|^2}\le U\sqrt{|A_1|^2 + |A_1^*|^2}U^* + V\sqrt{|B_1|^2 + |B_1^*|^2}V^*.
	$$
	Dividing by $\sqrt{2}$,
	$$
	|A_1+B_1|_{\qsym} \le U |A_1|_{\qsym} U^* + V|B_1|_{\qsym}V^*.
	$$
	This proves Theorem~\ref{thZ}.
\end{proof}

An interesting  special case \cite{BL26c} is:

\vskip 5pt\noindent
\begin{cor}\label{symq1} Let $Z\in\bM_n$. Then, for some unitaries $U,V\in\bM_n$,
$$
 |Z|_{\qsym} \le U |{\mathrm{Re\,}}Z| U^* + V |{\mathrm{Im\,}}Z|V^*.
$$
\end{cor}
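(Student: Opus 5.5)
The plan is to apply Theorem~\ref{thZ} to the Cartesian decomposition $Z=A+B$ with $A={\mathrm{Re\,}}Z$ and $B=i\,{\mathrm{Im\,}}Z$. This mirrors the way Corollary~\ref{corTcartesian} was obtained from Theorem~\ref{thT}. Theorem~\ref{thZ} then provides unitaries $U,V\in\bM_n$ such that
$$
|Z|_{\qsym}\le U|{\mathrm{Re\,}}Z|_{\qsym}U^*+V|i\,{\mathrm{Im\,}}Z|_{\qsym}V^*.
$$

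It remains to identify the quadratic symmetric moduli on the right-hand side. For a normal matrix $N$ we have $|N|=|N^*|$, hence
$$
|N|_{\qsym}=\left(\frac{|N|^2+|N|^2}{2}\right)^{1/2}=|N|.
$$
The matrix ${\mathrm{Re\,}}Z$ is Hermitian, so $|{\mathrm{Re\,}}Z|_{\qsym}=|{\mathrm{Re\,}}Z|$. The matrix $i\,{\mathrm{Im\,}}Z$ is skew-Hermitian, hence normal, and
$$
|i\,{\mathrm{Im\,}}Z|^2=(i\,{\mathrm{Im\,}}Z)^*(i\,{\mathrm{Im\,}}Z)=({\mathrm{Im\,}}Z)^2,
$$
so that $|i\,{\mathrm{Im\,}}Z|_{\qsym}=|{\mathrm{Im\,}}Z|$. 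Substituting these two identities into the display above gives exactly the claimed inequality.

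No step here is a real obstacle, since all the work is contained in Theorem~\ref{thZ}. The only point requiring care is the check that the quadratic symmetric modulus collapses to the ordinary modulus for normal matrices, and in particular for $i\,{\mathrm{Im\,}}Z$.
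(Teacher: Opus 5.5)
Your proof is correct and is the intended derivation: the paper introduces Corollary~\ref{symq1} as a special case of Theorem~\ref{thZ}, obtained from the Cartesian decomposition $Z=\mathrm{Re}\,Z+i\,\mathrm{Im}\,Z$, just as Corollary~\ref{corTcartesian} follows from Theorem~\ref{thT}. The only check needed is that $|N|_{\qsym}=|N|$ for the normal summands $\mathrm{Re}\,Z$ and $i\,\mathrm{Im}\,Z$, and you do it correctly.
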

\vskip 5pt\noindent

Applying Weyl's inequalities, we infer:

\vskip 5pt\noindent
\begin{cor}\label{symq2} Let $Z\in\bM_n$. Then, for all $j,k=0,1,2,\ldots$ such that $1+j+k\le n$, 
$$
\lambda_{1+j+k}^{\downarrow}\left( |Z|_{\qsym}\right) \le \lambda_{1+j}^{\downarrow}(|{\mathrm{Re\,}}Z|) + \lambda_{1+k}^{\downarrow}(|{\mathrm{Im\,}}Z|).
$$
\end{cor}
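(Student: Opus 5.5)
The plan is to combine Corollary~\ref{symq1} with Weyl's inequalities for sums of Hermitian matrices, together with the invariance of eigenvalues under unitary conjugation. First, Corollary~\ref{symq1} (that is, Theorem~\ref{thZ} applied to $A={\mathrm{Re\,}}Z$ and $B=i\,{\mathrm{Im\,}}Z$) gives unitaries $U,V\in\bM_n$ with
$$
|Z|_{\qsym}\le P+Q,\qquad P:=U|{\mathrm{Re\,}}Z|U^*,\quad Q:=V|{\mathrm{Im\,}}Z|V^*.
$$
Here I use that $|Z|_{\qsym}=\bigl(|{\mathrm{Re\,}}Z|^2+|{\mathrm{Im\,}}Z|^2\bigr)^{1/2}$, and that for the Hermitian matrix ${\mathrm{Re\,}}Z$ and the normal matrix $i\,{\mathrm{Im\,}}Z$ the right and left moduli coincide. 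Hence $|{\mathrm{Re\,}}Z|_{\qsym}=|{\mathrm{Re\,}}Z|$ and $|i\,{\mathrm{Im\,}}Z|_{\qsym}=|{\mathrm{Im\,}}Z|$.

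Next, since $|Z|_{\qsym}\le P+Q$ in the Loewner order, the min--max principle gives
$$
\lambda_{1+j+k}^{\downarrow}\bigl(|Z|_{\qsym}\bigr)\le \lambda_{1+j+k}^{\downarrow}(P+Q).
$$
This is the eigenvalue-order consequence of the operator inequality recalled at the start of Section~\ref{s:thompson}.

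The remaining ingredient is Weyl's inequality
$$
\lambda_{1+j+k}^{\downarrow}(P+Q)\le \lambda_{1+j}^{\downarrow}(P)+\lambda_{1+k}^{\downarrow}(Q)
$$
for Hermitian $P,Q$ and $1+j+k\le n$. I would prove it via min--max. Let $\trS_P$ be the span of the eigenvectors of $P$ for its $j$ largest eigenvalues, and $\trS_Q$ the analogous span for the $k$ largest eigenvalues of $Q$. On the orthogonal complement $\trS$ of $\trS_P+\trS_Q$, which has codimension at most $j+k$, every unit vector $h$ satisfies $\langle h,Ph\rangle\le\lambda_{1+j}^{\downarrow}(P)$ and $\langle h,Qh\rangle\le\lambda_{1+k}^{\downarrow}(Q)$. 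Every $(1+j+k)$-dimensional subspace meets $\trS$ nontrivially, so the max--min formula for $\lambda_{1+j+k}^{\downarrow}(P+Q)$ yields the bound.

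Finally, unitary invariance gives $\lambda^{\downarrow}_{1+j}(P)=\lambda^{\downarrow}_{1+j}(|{\mathrm{Re\,}}Z|)$ and $\lambda^{\downarrow}_{1+k}(Q)=\lambda^{\downarrow}_{1+k}(|{\mathrm{Im\,}}Z|)$, which completes the proof. There is no real obstacle once Corollary~\ref{symq1} is available. The only step needing care is the dimension count in Weyl's inequality, which the argument above settles; alternatively one could simply cite it, e.g.\ \cite{Bha97}.
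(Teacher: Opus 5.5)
Your proposal is correct and follows the paper's route: the paper also obtains this corollary by applying Weyl's inequalities to the unitary-orbit bound of Corollary~\ref{symq1}. Your version fills in the details the paper leaves implicit, namely eigenvalue monotonicity under the Loewner order, a min--max proof of Weyl's inequality, and unitary invariance of the spectrum.
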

\vskip 5pt\noindent

This is a sharp result, in the sense that we cannot take $U=V$, even up to a scalar multiple. Indeed:

 \vskip 5pt\noindent
\begin{prop} Fix $c>0$. There exists $C\in\bM_3$ such that
$$
\det \frac{|C|+|C^*|}{2} > c \det (|{\mathrm{Re\,}}C|+  |{\mathrm{Im\,}}C|).
$$
\end{prop}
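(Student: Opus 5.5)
The plan is to build an explicit one-parameter family $C_\varepsilon\in\bM_3$ for which the determinant of $|C_\varepsilon|_{\sym}=\frac{|C_\varepsilon|+|C_\varepsilon^*|}{2}$ decays more slowly, as $\varepsilon\to0$, than the determinant of $|{\mathrm{Re\,}}C_\varepsilon|+|{\mathrm{Im\,}}C_\varepsilon|$. The ratio then tends to $\infty$, and for a given $c>0$ we take $\varepsilon$ small enough. I also record that concavity of $t\mapsto t^{1/2}$ gives $|Z|_{\qsym}\ge|Z|_{\sym}$, so the determinants in the proposition are also bounded by $\det|Z|_{\qsym}$. This is why the proposition shows that in Corollary~\ref{symq1} one cannot take $U=V$, even with a constant.

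First I identify what the example must exploit. If $h$ is a unit vector with $\langle h,(|{\mathrm{Re\,}}C|+|{\mathrm{Im\,}}C|)h\rangle$ small, then $\|{\mathrm{Re\,}}C\,h\|$ and $\|{\mathrm{Im\,}}C\,h\|$ are only controlled by square roots of such quantities. Hence $\|Ch\|$, $\|C^*h\|$ are small, and so are $\langle h,|C|h\rangle$ and $\langle h,|C^*|h\rangle$. A single near-kernel direction therefore does not separate the two sides by itself. The gain must come from the determinant over the other directions: the absolute values $|{\mathrm{Re\,}}C|$ and $|{\mathrm{Im\,}}C|$ destroy sign information, whereas $C={\mathrm{Re\,}}C+i\,{\mathrm{Im\,}}C$ keeps it. So I would take ${\mathrm{Re\,}}C=H$ and ${\mathrm{Im\,}}C=K$ to be Hermitian with indefinite spectra, arranged so that $|H|+|K|$ has a small eigenvalue of order $\varepsilon^{b}$. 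The interference of signs in $H+iK$ should spread this smallness so that $\det\bigl(|C|+|C^*|\bigr)$ is only of order $\varepsilon^{a}$ with $a<b$. The fact that the statement is made in $\bM_3$ suggests that in $\bM_2$ no such gap occurs, so a third direction is needed to carry the sign cancellation.

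Concretely, my first attempt would be $H={\mathrm{diag}}(1,-1,\varepsilon)$ together with a Hermitian $K$ whose entries are of orders $1$, $\varepsilon^{1/2}$, $\varepsilon$, coupling the third basis vector to the first two with opposite signs. The steps, in order, are:
\begin{enumerate}[label=(\roman*)]
\item Compute $\det(|H|+|K|)$ to leading order in $\varepsilon$, using a perturbative expansion of $|K|$ around its unperturbed spectral projections.
\item Compute $|C|^2=C^*C$ and $|C^*|^2=CC^*$. Their determinants both equal $|\det C|^2$, which is easy to obtain.
\item Bound $\det\bigl(|C|+|C^*|\bigr)$ from below. Minkowski's determinant inequality gives $\det(X+Y)^{1/3}\ge\det X^{1/3}+\det Y^{1/3}$, hence $\det\frac{|C|+|C^*|}{2}\ge|\det C|$.
\end{enumerate}
So it suffices to arrange $|\det C|/\det(|H|+|K|)\to\infty$. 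This reduces the problem to a computation with $\det(H+iK)$, which is a polynomial, against the single harder term $\det(|H|+|K|)$.

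The main obstacle is step (i) together with choosing the family so that a genuine cancellation occurs. We need $\det(|H|+|K|)$ to be strictly smaller in order than $|\det(H+iK)|$, and the off-diagonal couplings must be tuned so that the absolute values actually lose something. If the first guess fails, I would parametrize $K=W\,{\mathrm{diag}}(k_1,k_2,k_3)\,W^*$ with $W$ a small rotation in the plane of the first and third basis vectors. Then $|K|=W|{\mathrm{diag}}(k_1,k_2,k_3)|W^*$ is explicit, and I would optimize the exponents by comparing leading terms. A numerical search in $\bM_3$ would guide this choice before the asymptotics are verified by hand.
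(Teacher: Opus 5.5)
Your plan fails at step (iii), and it fails for every choice of $H,K$. After the Minkowski bound $\det\frac{|C|+|C^*|}{2}\ge|\det C|$, you need $|\det C|/\det(|{\mathrm{Re\,}}C|+|{\mathrm{Im\,}}C|)\to\infty$. But this ratio is always at most $1$.

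To see this, put $H={\mathrm{Re\,}}C$, $K={\mathrm{Im\,}}C$, $P=|H|+|K|$. Write $H=|H|^{1/2}S|H|^{1/2}$ with $S$ a Hermitian contraction. This gives $|\langle y,Hx\rangle|\le\langle x,|H|x\rangle^{1/2}\langle y,|H|y\rangle^{1/2}$, and the same holds for $K$. Adding the two estimates and using Cauchy--Schwarz in $\bR^2$ yields
\[
|\langle y,Cx\rangle|^2\le\langle x,Px\rangle\,\langle y,Py\rangle,\qquad x,y\in\bC^n.
\]
If $P$ is invertible, this says $P^{-1/2}CP^{-1/2}$ is a contraction, so $|\det C|\le\det P$. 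If $P$ is singular, it gives $\ker P\subset\ker C$, so $\det C=0$. Hence $|\det C|\le\det(|{\mathrm{Re\,}}C|+|{\mathrm{Im\,}}C|)$ for every $C\in\bM_n$. No tuning of $H={\mathrm{diag}}(1,-1,\varepsilon)$ and $K$, perturbative or numerical, can give the divergence you want. The Minkowski step discards exactly the phenomenon the proposition is about. Separately, the proposal never exhibits and verifies a specific matrix, so as written it is a search strategy rather than a proof.

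The missing idea is the one you set aside in your second paragraph. If $\langle h,Ph\rangle=\delta$ for a unit vector $h$, you correctly find that $\langle h,|C|h\rangle$ and $\langle h,|C^*|h\rangle$ are only bounded by $O(\delta^{1/2})$. That square-root loss \emph{is} a separation of orders, and it is what drives the result. To exploit it, $C$ should be singular, so that $|\det C|$ is useless as a lower bound, while $|C|$ and $|C^*|$ have different kernels, so that their average is invertible. The paper takes the nilpotent matrix
\[
C_t=\begin{pmatrix}0&2t&0\\0&0&2\\0&0&0\end{pmatrix}.
\]
Here $|C_t|={\mathrm{diag}}(0,2t,2)$ and $|C_t^*|={\mathrm{diag}}(2t,2,0)$, so $\det\frac{|C_t|+|C_t^*|}{2}=t(t+1)$. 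On the other side, $|{\mathrm{Re\,}}C_t|+|{\mathrm{Im\,}}C_t|=\frac{2}{\sqrt{t^2+1}}{\mathrm{diag}}(t^2,t^2+1,1)$ has determinant $8t^2/\sqrt{t^2+1}$. The ratio $(t+1)\sqrt{t^2+1}/(8t)$ tends to $\infty$ as $t\to0$. The divergence comes from the first coordinate, where the two sides are $t$ and roughly $2t^2$: exactly the $\delta^{1/2}$ versus $\delta$ gap.
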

\vskip 5pt\noindent

\begin{proof}
Take
$$
C_t=\begin{pmatrix} 0&2t&0 \\
0&0&2 \\
0&0&0
\end{pmatrix},\qquad t>0.
$$
Then
$$
\frac{|C_t|+|C_t^*|}{2}=\operatorname{diag}(t,t+1,1),
$$
and
$$
|{\mathrm{Re\,}}C_t|+|{\mathrm{Im\,}}C_t|
=\frac{2}{\sqrt{t^2+1}}\operatorname{diag}(t^2,t^2+1,1).
$$
Therefore
$$
\frac{\det \frac{|C_t|+|C_t^*|}{2}}
{\det(|{\mathrm{Re\,}}C_t|+|{\mathrm{Im\,}}C_t|)}
=\frac{(t+1)\sqrt{t^2+1}}{8t}\longrightarrow\infty,
$$
which proves the assertion.
\end{proof}

Thompson's inequality for quadratic symmetric modulus immediately induces a norm inequality for every symmetric norm.

\vskip 5pt\noindent
\begin{cor}
For every symmetric norm on $\bM_n$, the functional $X\mapsto \| |X|_{\qsym} \|$ is a weakly unitarily invariant norm.
\end{cor}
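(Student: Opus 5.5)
We need to show that $N(X):=\| |X|_{\qsym}\|$ is a norm and that $N(UXU^*)=N(X)$ for every unitary $U$. This is weak unitary invariance: invariance under $X\mapsto UXV$ with $U\neq V$ fails in general, because $|X|$ and $|X^*|$ get mixed. The plan is to check positivity, homogeneity and weak unitary invariance directly from the definition, and to get the triangle inequality from Theorem~\ref{thZ}.

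\emph{Elementary properties.} Since $|X|_{\qsym}\ge0$ and a symmetric norm of a positive matrix vanishes only when the matrix does, $N(X)=0$ forces
$$|X|^2+|X^*|^2=0,$$
so $X^*X=0$ and $X=0$. Homogeneity follows from $|\alpha X|=|\alpha|\,|X|$ and $|(\alpha X)^*|=|\alpha|\,|X^*|$, which give $|\alpha X|_{\qsym}=|\alpha|\,|X|_{\qsym}$. For a unitary $U$, we have
$$|UXU^*|^2=U|X|^2U^*, \qquad |(UXU^*)^*|^2=U|X^*|^2U^*.$$
Functional calculus then gives $|UXU^*|_{\qsym}=U|X|_{\qsym}U^*$, and unitary invariance of $\|\cdot\|$ yields $N(UXU^*)=N(X)$.

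\emph{Triangle inequality.} Let $A,B\in\bM_n$. By Theorem~\ref{thZ} there are unitaries $U,V$ such that
$$0\le |A+B|_{\qsym}\le U|A|_{\qsym}U^*+V|B|_{\qsym}V^*.$$
Symmetric norms are monotone on $\bM_n^+$, as recorded in Section~\ref{s:symmetric-norms}: if $0\le X\le Y$ then $X\simeq KYK$ with $K$ a contraction. Applying this, then the triangle inequality and unitary invariance of $\|\cdot\|$, we get
$$N(A+B)\le \|U|A|_{\qsym}U^*\|+\|V|B|_{\qsym}V^*\|=N(A)+N(B).$$

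No step is a real obstacle, since Theorem~\ref{thZ} carries all the noncommutative difficulty. The only point needing care is that $0\le X\le Y$ implies $\|X\|\le\|Y\|$, and this was already established earlier in the paper.
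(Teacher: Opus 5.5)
Your proposal is correct and follows the paper's route: the triangle inequality comes from Theorem~\ref{thZ} together with the monotonicity of symmetric norms on $\bM_n^+$ and their unitary invariance. Definiteness, homogeneity and invariance under $X\mapsto UXU^*$ are the routine checks the paper leaves implicit, and you carry them out correctly.
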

\vskip 5pt\noindent

Combining Theorem~\ref{thZ} with the Aujla--Bourin inequality
\eqref{eq:Auj-Bou}, we obtain the following consequence.
\vskip 5pt\noindent
\begin{thm} Let $A,B\in\bM_n$ and let $f:[0,\infty)\to[0,\infty)$ be concave. Then, there exist unitary matrices $U,V$ such that 
	$$f(|A+B|_{\mathrm{qsym}})\le  Uf(|A|_{\mathrm{qsym}})U^*+Vf(|B|_{\mathrm{qsym}})V^*.
	$$
\end{thm}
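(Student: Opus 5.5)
The plan copies the proof of Theorem~\ref{thTAB}, with Theorem~\ref{thZ} in place of Thompson's inequality. First, Theorem~\ref{thZ} gives unitaries $U_0,V_0\in\bM_n$ with
$$
|A+B|_{\qsym}\le P:=U_0|A|_{\qsym}U_0^*+V_0|B|_{\qsym}V_0^*.
$$
Both sides are positive semidefinite. The Loewner order implies, by the min--max principle, the eigenvalue order $\lambda_j^{\downarrow}(|A+B|_{\qsym})\le\lambda_j^{\downarrow}(P)$ for all $j$.

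Second, I use that a nonnegative concave $f$ on $[0,\infty)$ is nondecreasing. If $f$ decreased somewhere, concavity would force $f(t)\to-\infty$ as $t\to\infty$, contradicting $f\ge0$. Applying $f$ therefore preserves the ordered eigenvalue lists, so $\lambda_j^{\downarrow}(f(|A+B|_{\qsym}))\le\lambda_j^{\downarrow}(f(P))$. By the equivalence of the eigenvalue order and the unitary orbit order recalled in Section~\ref{s:thompson}, there is a unitary $W$ with
$$
f(|A+B|_{\qsym})\le Wf(P)W^*.
$$

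Third, I apply the Aujla--Bourin inequality \eqref{eq:Auj-Bou} to the positive matrices $U_0|A|_{\qsym}U_0^*$ and $V_0|B|_{\qsym}V_0^*$. This gives unitaries $U_1,V_1$ with
$$
f(P)\le U_1f(U_0|A|_{\qsym}U_0^*)U_1^*+V_1f(V_0|B|_{\qsym}V_0^*)V_1^*.
$$
The functional calculus commutes with unitary conjugation, so $f(U_0XU_0^*)=U_0f(X)U_0^*$. Conjugating the last inequality by $W$, which preserves the Loewner order, and chaining it with the previous one gives
$$
f(|A+B|_{\qsym})\le Uf(|A|_{\qsym})U^*+Vf(|B|_{\qsym})V^*,\qquad U=WU_1U_0,\quad V=WV_1V_0.
$$

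No step is a genuine obstacle, since everything is quoted from earlier results. The only point needing care is the second step: the passage from Loewner order to the orbit order after applying $f$ relies on the monotonicity of $f$.
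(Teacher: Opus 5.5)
Your proposal is correct and is essentially the paper's argument: the paper obtains this theorem by combining Theorem~\ref{thZ} with the Aujla--Bourin inequality \eqref{eq:Auj-Bou}, following the template of the proof of Theorem~\ref{thTAB}. Your monotonicity argument for $f$ and your composition of unitaries $U=WU_1U_0$, $V=WV_1V_0$ are both sound.
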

\vskip 5pt\noindent

The following condition-number estimates for the quadratic symmetric
modulus were proved in \cite{Zha26a}. We record them here
as quadratic symmetric analogues of the estimates in
Section~\ref{s:right}.
\vskip 5pt\noindent
\begin{prop}[\cite{Zha26a}]
	Let \(A_1,\dots,A_m\in \mathbb M_n\) be invertible matrices whose condition numbers are bounded by \(\omega\ge1\). Then
	\begin{equation*}
		\bigl|A_1+\cdots+A_m\bigr|_{\mathrm{qsym}}
		\le
		\frac{\omega+1}{2\sqrt{\omega}}\,
		\bigl(|A_1|_{\mathrm{qsym}}+\cdots+|A_m|_{\mathrm{qsym}}\bigr),
	\end{equation*}
	where the constant is sharp as a function of $\omega$.
\end{prop}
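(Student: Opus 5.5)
The plan is to reduce the statement to the corresponding condition-number estimate for the right modulus, Theorem~\ref{thm:Lee-triangle-condition-number}, by the same block-matrix device used in the proof of Theorem~\ref{thZ}. For each $k$ set
\[
\widehat A_k=\begin{bmatrix} A_k&0\\ A_k^*&0\end{bmatrix}\in\bM_{2n}.
\]
Then $|\widehat A_k|=\sqrt2\,|A_k|_{\qsym}\oplus 0$, and $|\sum_k\widehat A_k|=\sqrt2\,|\sum_k A_k|_{\qsym}\oplus 0$. The obstruction is that $\widehat A_k$ is not invertible, so Lee's inequality cannot be applied to it directly. The main work is therefore to check that the proof of Theorem~\ref{thm:Lee-triangle-condition-number} really only uses a column-type condition. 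Concretely, it should only need the quantity $\|A\|_\infty\,\|(A^*A)^{-1/2}\|_\infty$ for $A$ of full column rank, that is, the ratio of the extreme singular values of the $2n\times n$ column $\begin{bmatrix}A_k\\ A_k^*\end{bmatrix}$.

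So the first step is to restate Lee's inequality for $2n\times n$ matrices $X_k$ with $X_k^*X_k$ invertible and $\lambda_{\max}(X_k^*X_k)\le\omega^2\lambda_{\min}(X_k^*X_k)$, in the form
\[
\Bigl|\sum X_k\Bigr|\le\frac{\omega+1}{2\sqrt\omega}\sum|X_k|.
\]
I expect the proof to go as in Lee's argument. Write $|\sum X_k|^2=\sum_{j,k}X_j^*X_k$ and use operator monotonicity of the square root, so that it suffices to show
\[
\Bigl(\sum X_k\Bigr)^*\Bigl(\sum X_k\Bigr)\le c^2\Bigl(\sum|X_k|\Bigr)^2 .
\]
Equivalently, $\bigl\|(\sum X_k)(\sum |X_k|)^{-1}\bigr\|_\infty\le c$. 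Setting $X_k=W_k|X_k|$ with $W_k$ an isometry, this is a bound for $\sum_k W_kP_k$, where $P_k=|X_k|(\sum|X_j|)^{-1}$. The bound then follows from the Kantorovich-type estimate for positive $|X_k|$ with spectrum in $[a,\omega a]$. That estimate involves only $|X_k|$ and not $X_k$ being square, so the argument transfers verbatim.

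The second step is to bound the column condition number of $\widehat A_k$ by $\kappa(A_k)$. Here $\widehat A_k^*\widehat A_k=(|A_k|^2+|A_k^*|^2)\oplus 0$, and its compression $|A_k|^2+|A_k^*|^2$ has all eigenvalues in $[2s_n(A_k)^2,\,2s_1(A_k)^2]$. Hence the ratio of the extreme singular values of $\begin{bmatrix}A_k\\ A_k^*\end{bmatrix}$ is at most $s_1(A_k)/s_n(A_k)\le\omega$. Applying step one and compressing to the first $n\times n$ block, then dividing by $\sqrt2$, gives the desired inequality.

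For sharpness, I would take the extremal family of Zhang for Theorem~\ref{thm:Lee-triangle-condition-number} and check whether it can be chosen normal. For normal $Z$ one has $|Z|_{\qsym}=|Z|$, and a sum of normal matrices is not normal in general, so this needs care. Alternatively, the constant is already attained with the right modulus on suitable commuting normal examples. The safest route is to scale Zhang's explicit $2\times2$ extremizers and verify directly that $|A_1+A_2|_{\qsym}$ saturates the bound. I regard adapting Lee's proof to non-square columns (step one) as the main obstacle, and the sharpness verification as the second.
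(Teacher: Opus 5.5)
\paragraph{The upper bound is sound.} Passing to the $2n\times n$ columns $X_k=\begin{bmatrix}A_k\\ A_k^*\end{bmatrix}$ gives $|X_k|=\sqrt2\,|A_k|_{\qsym}$ and $|\sum_k X_k|=\sqrt2\,|\sum_k A_k|_{\qsym}$. The spectrum of $|X_k|$ lies in $[\sqrt2\,s_n(A_k),\sqrt2\,s_1(A_k)]$, and Lee's argument never uses squareness. Write it out rather than gesturing at it: with $Y=\sum_k|X_k|$, $x$ arbitrary and $y=Y^{-1}x$,
\[
\Bigl\|\sum_k X_k y\Bigr\|\le\sum_k\bigl\||X_k|y\bigr\|\le\frac{\omega+1}{2\sqrt\omega}\,\frac{\langle Yy,y\rangle}{\|y\|}\le\frac{\omega+1}{2\sqrt\omega}\,\|x\|.
\]
The middle step uses $\|Hy\|\,\|y\|\le\frac{\omega+1}{2\sqrt\omega}\langle Hy,y\rangle$ for $H$ with spectrum in $[a,\omega a]$, which follows from $(H-a)(\omega a-H)\ge0$ and AM--GM. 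Operator monotonicity of the square root then finishes. This is the same column device the paper attributes to Zhang in the proof of Theorem~\ref{thZ}.

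\paragraph{The gap is sharpness.} Sharpness is part of the statement, and you leave it as an unexecuted plan. One of your fallback routes cannot work at all. Commuting normal matrices are simultaneously unitarily diagonalizable, so for them $|\sum_k A_k|\le\sum_k|A_k|$ holds with constant $1$; they never exhibit the factor $\frac{\omega+1}{2\sqrt\omega}$ when $\omega>1$. Be careful also that an example with $\|(\sum_k A_k)(\sum_k|A_k|)^{-1}\|_\infty=c$ only shows sharpness of $X^*X\le c^2Y^2$, not of the weaker operator inequality $|X|\le cY$.

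The missing idea is to use Hermitian matrices. Their sum is again Hermitian, and every quadratic symmetric modulus involved collapses to the ordinary modulus, so any Hermitian extremizer of Lee's inequality suffices. For instance, take
\[
A_1=\begin{pmatrix}0&\sqrt\omega\\ \sqrt\omega&\omega-1\end{pmatrix},\qquad
A_2=\begin{pmatrix}0&\sqrt\omega\\ \sqrt\omega&1-\omega\end{pmatrix}.
\]
Their eigenvalues are $\{\omega,-1\}$ and $\{-\omega,1\}$, so $\kappa(A_1)=\kappa(A_2)=\omega$. Then $|A_1+A_2|_{\qsym}=2\sqrt\omega\,I$, while $|A_1|_{\qsym}+|A_2|_{\qsym}=\frac{2}{1+\omega}\operatorname{diag}(2\omega,1+\omega^2)$. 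Comparing $(1,1)$ entries in $|A_1+A_2|_{\qsym}\le c'\bigl(|A_1|_{\qsym}+|A_2|_{\qsym}\bigr)$ forces $c'\ge\frac{1+\omega}{2\sqrt\omega}$. Because $|A_1+A_2|$ is scalar here, the operator inequality itself is saturated, not just the norm of $XY^{-1}$.
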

\vskip 5pt\noindent
\vskip 5pt\noindent
\begin{prop}[\cite{Zha26a}]
	Let \(A_1,\dots,A_m\in \mathbb M_n\) be invertible matrices whose condition numbers are $\kappa_1,\ldots,\kappa_m$.
	Then
	\[
	\bigl|A_1+\cdots+A_m\bigr|_{\mathrm{qsym}}
	\le
	\frac{\kappa_1+1}{2\sqrt{\kappa_1}}\,
	|A_1|_{\mathrm{qsym}}+\cdots+ 	\frac{\kappa_m+1}{2\sqrt{\kappa_m}}\,|A_m|_{\mathrm{qsym}},
	\]
	where the coefficient function
	$
	c(\kappa):=\frac{\kappa+1}{2\sqrt{\kappa}}
	$
	is individually sharp for each term: for every \(\kappa\ge 1\), the coefficient \(c(\kappa)\)
	in front of a term of condition number \(\kappa\) cannot be replaced by any smaller constant,
	even if all the other coefficients are kept unchanged.
\end{prop}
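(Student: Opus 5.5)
The plan is to reduce to the corresponding statement for the right modulus, Theorem~\ref{thm:Zhang-triangle-condition-number}, using the same $2n\times 2n$ block embedding as in the proof of Theorem~\ref{thZ}. For each $k$ set
\[
\widehat A_k=\begin{bmatrix} A_k & 0\\ A_k^* & 0\end{bmatrix}\in\bM_{2n},
\]
so that $|\widehat A_k|=\begin{bmatrix}\sqrt{2}\,|A_k|_{\qsym}&0\\0&0\end{bmatrix}$ and $|\sum_k\widehat A_k|=\begin{bmatrix}\sqrt{2}\,|\sum_k A_k|_{\qsym}&0\\0&0\end{bmatrix}$. The difficulty is that $\widehat A_k$ is not invertible, so Theorem~\ref{thm:Zhang-triangle-condition-number} does not apply verbatim. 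I would avoid this by using a full-rank embedding, for instance
\[
\widetilde A_k=\frac{1}{\sqrt2}\begin{bmatrix} A_k & -A_k^*\\ A_k^* & A_k\end{bmatrix}.
\]
A direct computation gives $\widetilde A_k^*\widetilde A_k=\frac12\begin{bmatrix} A_k^*A_k+A_kA_k^* & -A_k^{*2}+A_k^{*2}\\ \cdot & \cdot\end{bmatrix}$, and the off-diagonal blocks cancel. Hence
\[
|\widetilde A_k|=|A_k|_{\qsym}\oplus|A_k|_{\qsym},
\]
and this construction is linear in $A_k$, so the sum embeds as well.

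The key step is then to bound the condition number of $\widetilde A_k$ by that of $A_k$. The singular values of $\widetilde A_k$ are the eigenvalues of $|A_k|_{\qsym}$, each counted twice. Since $\frac{|A|^2+|A^*|^2}{2}$ is an average of two positive matrices whose spectra lie in $[\|A^{-1}\|_\infty^{-2},\|A\|_\infty^2]$, its spectrum lies in the same interval. Therefore $\kappa(\widetilde A_k)\le\kappa_k$, and in particular $\widetilde A_k$ is invertible.

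Since $c(\kappa)=\frac{\kappa+1}{2\sqrt\kappa}$ is nondecreasing on $[1,\infty)$, Theorem~\ref{thm:Zhang-triangle-condition-number} applied to $\widetilde A_1,\dots,\widetilde A_m$ yields
\[
\Bigl|\sum_k A_k\Bigr|_{\qsym}\oplus\Bigl|\sum_k A_k\Bigr|_{\qsym}\le\bigoplus_{2}\ \sum_k c(\kappa_k)\,|A_k|_{\qsym}.
\]
Compressing to the first block gives the inequality.

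For sharpness, the point to check is that the extremal examples for Theorem~\ref{thm:Zhang-triangle-condition-number} can be taken normal, or at least satisfying $|A|=|A^*|$, so that $|A|_{\qsym}=|A|$ and the same examples transfer. This is the step I expect to be the main obstacle. If Zhang's extremizers are not of that type, I would try to construct new ones in $\bM_2$: take $A_1$ normal with eigenvalues of moduli $1$ and $\kappa$ and suitable arguments, and let the remaining terms be small perturbations. I would then verify through the computation in a single vector direction $h$ that $\langle h,|A_1+\cdots|_{\qsym}h\rangle$ approaches $c(\kappa)\langle h,|A_1|_{\qsym}h\rangle$ while the other terms are negligible. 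The individual sharpness claim would then follow exactly as in the right-modulus case.
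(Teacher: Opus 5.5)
\textbf{The inequality part has a gap: the square embedding is wrong.} For $\widetilde A_k=\frac{1}{\sqrt2}\begin{bmatrix}A_k&-A_k^*\\ A_k^*&A_k\end{bmatrix}$, the $(1,2)$ block of $\widetilde A_k^*\widetilde A_k$ is
$\frac12\bigl(A_k^*(-A_k^*)+A_kA_k\bigr)=\frac12(A_k^2-A_k^{*2})$, not $0$. You wrote $A_k^{*2}$ for the product $A_kA_k$.

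So $|\widetilde A_k|\neq|A_k|_{\qsym}\oplus|A_k|_{\qsym}$ in general, and your invertibility and condition-number claims fail with it. Already for $n=1$ and $A_k=a=e^{i\pi/4}$, which has condition number $1$, one gets $\det\widetilde A_k=\frac12(a^2+\bar a^2)=0$. Then Theorem~\ref{thm:Zhang-triangle-condition-number} cannot even be applied.

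Adjusting signs will not help. For $n\ge2$, no nonzero column $\begin{bmatrix}\alpha A_k+\beta A_k^*\\ \gamma A_k+\delta A_k^*\end{bmatrix}$ is orthogonal to $\begin{bmatrix}A_k\\ A_k^*\end{bmatrix}$ for every $A_k$; test a nilpotent $A_k$ and a scalar $A_k$.

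\textbf{The fix.} Keep the rectangular column $C_k=\begin{bmatrix}A_k\\ A_k^*\end{bmatrix}\in\bM_{2n,n}$ and re-run the right-modulus argument for it, rather than quoting the square theorem as a black box.
\begin{enumerate}
\item Its modulus $|C_k|=\sqrt2\,|A_k|_{\qsym}$ is invertible, and $\kappa(|C_k|)\le\kappa_k$ by your spectral argument, which is correct.
\item Write $\sum_kC_k=V\bigl|\sum_kC_k\bigr|$ with $V\in\bM_{2n,n}$ an isometry, and $C_k=V_k|C_k|$. Taking real parts of $V^*\sum_kC_k$ gives $\bigl|\sum_kC_k\bigr|=\sum_k\Ree(T_k|C_k|)$, where the $T_k=V^*V_k$ are contractions.
\item For a contraction $T$ and $P>0$, the Kantorovich inequality gives $\Ree\langle y,TPy\rangle\le\|y\|\,\|Py\|\le c(\kappa(P))\langle y,Py\rangle$. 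That is, $\Ree(TP)\le c(\kappa(P))P$.
\item Since $c$ is nondecreasing on $[1,\infty)$, you obtain $\sqrt2\,\bigl|\sum_kA_k\bigr|_{\qsym}\le\sqrt2\sum_kc(\kappa_k)|A_k|_{\qsym}$.
\end{enumerate}

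\textbf{Sharpness.} Your primary idea works, but it has to be carried out: Hermitian extremizers exist. Take $A_1=\operatorname{diag}(1,\kappa)$ and $A_2=tH$ with $H=\begin{bmatrix}0&1\\1&0\end{bmatrix}$. Every matrix involved is Hermitian, so $|\cdot|_{\qsym}=|\cdot|$ throughout. Then $|A_1+tH|=tI+\Ree(HA_1)+O(1/t)$. For $h=(\sqrt\kappa,1)^T$ one finds $\langle h,\Ree(HA_1)h\rangle=(1+\kappa)\sqrt\kappa=c(\kappa)\langle h,A_1h\rangle$. Hence $|A_1+A_2|_{\qsym}\le c'|A_1|_{\qsym}+c(1)|A_2|_{\qsym}=c'A_1+tI$ fails for large $t$ whenever $c'<c(\kappa)$, with the other coefficient unchanged.

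Your fallback is in the wrong regime. If the other terms are small perturbations, continuity gives $|A_1+\cdots+A_m|_{\qsym}\to|A_1|_{\qsym}\le c'|A_1|_{\qsym}$ for every $c'\ge1$, so no violation can appear. The extremal effect comes from $A_1$ being a small perturbation of a dominant, well-conditioned term.
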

\vskip 5pt\noindent

\subsection{Unitarily invariant norms}

We next pass from operator inequalities to inequalities for unitarily invariant
norms. In this setting, one can compare the ordinary sum directly with the sum
of the quadratic symmetric moduli. The following result gives a sharp universal
constant.

	\vskip 5pt\noindent
\begin{thm}[{\cite{Zha26b}}]\label{thm:sum-vs-qsym}
	Let $A_1,\dots,A_m\in\mathbb{M}_n$. Then,  for all symmetric norms,
	\[
	\Bigl\lVert \sum_{k=1}^m A_k \Bigr\rVert
	\ \le\ \sqrt2\,
	\Bigl\lVert \sum_{k=1}^m \bigl|A_k\bigr|_{\mathrm{qsym}} \Bigr\rVert .
	\]
	Moreover, the constant $\sqrt2$ is sharp.
\end{thm}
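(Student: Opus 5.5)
The plan is to pass to the Hermitian dilation in $\bM_{2n}$, where the triangle inequality holds with no unitary conjugations at all. Then I compare both one-sided moduli $|A_k|$ and $|A_k^*|$ with $\sqrt2\,|A_k|_{\qsym}$, and return to $\bM_n$ through weak majorization and Ky Fan's principle (Corollary~\ref{KFprinciple}).

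First, the dilation. Set $Z=\sum_k A_k$ and
\[
H_k=\begin{bmatrix}0&A_k\\ A_k^*&0\end{bmatrix},\qquad H=\sum_{k=1}^m H_k=\begin{bmatrix}0&Z\\ Z^*&0\end{bmatrix}.
\]
A direct computation gives $H_k^2=|A_k^*|^2\oplus|A_k|^2$, so $|H_k|=|A_k^*|\oplus|A_k|$. Since $H_k\le|H_k|$ and $-H_k\le|H_k|$, we get
\[
-P\le H\le P,\qquad P:=\Bigl(\sum_k|A_k^*|\Bigr)\oplus\Bigl(\sum_k|A_k|\Bigr).
\]
For Hermitian $H$ with $-P\le H\le P$, one has $|H|\prec_w P$. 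To see this, take a rank $j$ spectral projection $E$ of $|H|$, split it as $E=E_+ + E_-$ along the positive and negative parts of $H$, and use the Ky Fan variational principle \eqref{variassumeig}:
\[
\Tr E|H|=\Tr E_+H-\Tr E_-H\le \Tr(E_++E_-)P\le\sum_{i\le j}\lambda_i^{\downarrow}(P).
\]
The eigenvalues of $H$ are $\pm$ the singular values of $Z$, so $|H|\simeq|Z|\oplus|Z|$.

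Second, the comparison with the quadratic modulus. From $|A_k|^2\le|A_k|^2+|A_k^*|^2=2|A_k|_{\qsym}^2$ and operator monotonicity of $t\mapsto t^{1/2}$, we get $|A_k|\le\sqrt2\,|A_k|_{\qsym}$, and likewise $|A_k^*|\le\sqrt2\,|A_k|_{\qsym}$. Summing over $k$ gives $P\le\sqrt2\,(Q\oplus Q)$ with $Q:=\sum_k|A_k|_{\qsym}$. Combining with the first step,
\[
|Z|\oplus|Z|\prec_w \sqrt2\,(Q\oplus Q).
\]

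Third, the return to $\bM_n$. The sum of the $2j$ largest eigenvalues of $X\oplus X$ is twice the sum of the $j$ largest eigenvalues of $X$. Applying this to $|Z|$ and to $\sqrt2\,Q$, the previous display yields $|Z|\prec_w\sqrt2\,Q$. Ky Fan's principle (Corollary~\ref{KFprinciple}) then gives $\|Z\|\le\sqrt2\,\|Q\|$ for every symmetric norm, which is the stated inequality.

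For sharpness, take $m=1$ and $A=\begin{pmatrix}0&1\\0&0\end{pmatrix}$. Then $|A|_{\qsym}=I/\sqrt2$, so $\|A\|_\infty=1=\sqrt2\,\bigl\||A|_{\qsym}\bigr\|_\infty$, and the constant $\sqrt2$ cannot be lowered. (For $n>2$, pad $A$ by zeros.)

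The step that needs the most care is the first one: checking that $-P\le H\le P$ really gives the submajorization $|H|\prec_w P$, via the splitting along the positive and negative parts of $H$. The rest is routine bookkeeping.
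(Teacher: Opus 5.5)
Your proof is correct and complete. The survey itself does not prove this theorem; it quotes it from \cite{Zha26b}. It only remarks in Section~\ref{s:hybrid} that the theorem is equivalent to the weak majorization $\bigl|\sum_k A_k\bigr|\prec_w\sqrt2\sum_k|A_k|_{\qsym}$, and it records the stronger weak log-majorization of \cite{BL26d}. Your argument proves exactly this weak-majorization form. It uses only the tools of Section~\ref{s:majorization} (the Ky Fan variational principle and Corollary~\ref{KFprinciple}) plus operator monotonicity of $t\mapsto t^{1/2}$, so it fits the survey's unitary-orbit/majorization viewpoint well. All the individual steps check out:
\begin{itemize}
\item $|H_k|=|A_k^*|\oplus|A_k|$;
\item the splitting $\Tr E|H|=\Tr E_+H-\Tr E_-H$ gives $|H|\prec_w P$;
\item $|H|\simeq|Z|\oplus|Z|$;
\item the passage from index $2j$ to index $j$ is valid.
\end{itemize}

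A comparison of what your route buys. Your key relation $-P\le H\le P$ is the same as positivity of
\[
\begin{bmatrix} \sum_k|A_k^*| & Z\\ Z^* & \sum_k|A_k|\end{bmatrix}=\sum_k\begin{bmatrix}|A_k^*| & A_k\\ A_k^* & |A_k|\end{bmatrix},
\]
because $P-H$ is obtained from $P+H$ by conjugating with $I\oplus(-I)$. So you could avoid the dilation altogether. Set $X=\sum_k|A_k^*|$ and $Y=\sum_k|A_k|$, and write $Z=X^{1/2}KY^{1/2}$ with $K$ a contraction. Horn's product inequality for singular values, combined with your bounds $X,Y\le\sqrt2\,Q$, then gives $|Z|\prec_{\mathrm{wlog}}\sqrt2\,Q$. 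This is precisely the refinement of \cite{BL26d} stated later in the paper. Your trace-splitting step is intrinsically additive and yields only weak majorization, but that is all the theorem needs.

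Two minor remarks on sharpness. First, the statement must be read for $n\ge2$: for $n=1$ one has $|a|_{\qsym}=|a|$, and the constant $1$ suffices. Second, ``sharp'' refers to the class of all symmetric norms. Your example attains equality for $\|\cdot\|_\infty$, whereas for $\|\cdot\|_p$ with $p\le2$ the constant $1$ already works by Theorem~\ref{thm:schatten-sum-vs-qsym-p}.
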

\vskip 5pt\noindent
	The preceding theorem compares the ordinary sum with the sum of quadratic symmetric moduli. Applying the quadratic symmetric modulus to the total sum leads to the following dimension-dependent estimate.

\vskip 5pt\noindent
\begin{thm}[{\cite{Zha26b}}]\label{thm:qsym-Lee}
	Let $A_1,\dots,A_m\in\mathbb{M}_n$. Then,  for all symmetric norms,
	\[
	\Bigl\lVert \Bigl|\sum_{k=1}^m A_k\Bigr|_{\mathrm{qsym}} \Bigr\rVert
	\ \le\ \sqrt{\min \{m,2n\}}\,
	\Bigl\lVert \sum_{k=1}^m \bigl|A_k\bigr|_{\mathrm{qsym}} \Bigr\rVert .
	\]
\end{thm}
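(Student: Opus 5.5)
The plan is to reduce the statement to Theorem~\ref{thm:Lee-optimal} through the block-matrix device already used in the proof of Theorem~\ref{thZ}. For each $k$ set
\[
\widehat A_k=\begin{bmatrix} A_k&0\\ A_k^*&0\end{bmatrix}\in\bM_{2n}.
\]
Then $\widehat A_k^*\widehat A_k=\begin{bmatrix}|A_k|^2+|A_k^*|^2&0\\0&0\end{bmatrix}$, so
\[
|\widehat A_k|=\sqrt2\begin{bmatrix}|A_k|_{\qsym}&0\\0&0\end{bmatrix}.
\]
The map $Z\mapsto\widehat Z$ is additive, and $Z\mapsto (Z,Z^*)$ is real-linear. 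Hence $\sum_k\widehat A_k=\widehat S$ with $S=\sum_kA_k$, and likewise
\[
|\widehat S|=\sqrt2\,\bigl(|S|_{\qsym}\oplus 0\bigr).
\]

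Next, given a symmetric norm $\|\cdot\|$ on $\bM_n$, I would define $\|X\|':=\|\,\Phi(s(X))\,\|$ on $\bM_{2n}$. Here the singular values of $X$ are truncated to the $n$ largest, through the Ky Fan dominance description. More concretely, take the symmetric gauge function $\Phi$ of $\|\cdot\|$ and set $\Phi'(x_1,\dots,x_{2n})=\Phi(x^{\downarrow}_1,\dots,x^{\downarrow}_n)$, which is a symmetric gauge function on $\bR^{2n}$. For matrices of the form $Y\oplus 0$ with $Y\in\bM_n$ one has $\|Y\oplus0\|'=\|Y\|$.

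Applying Theorem~\ref{thm:Lee-optimal} in $\bM_{2n}$ to $\widehat A_1,\dots,\widehat A_m$ with the norm $\|\cdot\|'$ gives
\[
\|\widehat S\|'\le\sqrt{\min\{m,2n\}}\,\Bigl\|\sum_k|\widehat A_k|\Bigr\|'.
\]
The left side equals $\||\widehat S|\|'=\sqrt2\,\||S|_{\qsym}\|$. The right side equals $\sqrt{\min\{m,2n\}}\cdot\sqrt2\,\bigl\|\sum_k|A_k|_{\qsym}\bigr\|$. Cancelling $\sqrt2$ yields the claim.

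The main point to check is that $\Phi'$ really is a symmetric gauge function, in particular that it satisfies the triangle inequality. This holds because $x\mapsto(x^{\downarrow}_1,\dots,x^{\downarrow}_n)$ is subadditive in the weak-majorization sense, and $\Phi$ is monotone with respect to $\prec_w$ on nonnegative vectors by Corollary~\ref{KFprinciple}. I also need to confirm that Theorem~\ref{thm:Lee-optimal} holds for every symmetric norm on $\bM_{2n}$, not just for extensions of a given one, so that it applies to $\|\cdot\|'$. Finally, the dimension appears as $2n$ precisely because the block matrices live in $\bM_{2n}$.
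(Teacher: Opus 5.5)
Your reduction is correct and is essentially the intended argument. The survey gives no proof beyond citing \cite{Zha26b}, but lifting by $Z\mapsto\begin{bmatrix}Z&0\\ Z^*&0\end{bmatrix}$ as in the proof of Theorem~\ref{thZ} and then applying Theorem~\ref{thm:Lee-optimal} in $\bM_{2n}$ is exactly what produces $\min\{m,2n\}$; the same mechanism explains why the constants listed for Problem~\ref{prob:schatten-qsym-vs-qsym-p} are the absolute-value ones with $n$ replaced by $2n$. Your auxiliary gauge function $\Phi'$ is valid once you insert absolute values, though you could skip it: by Corollary~\ref{KFprinciple} it suffices to treat the Ky Fan $k$-norms with $k\le n$, and $\|Y\oplus 0\|_{(k)}=\|Y\|_{(k)}$ for $Y\in\bM_n$.
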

\vskip 5pt\noindent
Furthermore, we strongly suspect that the constant in Theorem~\ref{thm:qsym-Lee} is not optimal; in particular, we have not been able to find any examples that attain  equality. This leaves an open problem.
\subsection{Schatten $p$-norms}

For Schatten $p$-norms, the comparison between the ordinary sum and the sum of
quadratic symmetric moduli admits sharper constants than those available for
general unitarily invariant norms. In fact, the optimal behavior exhibits a
phase transition at \(p=2\): no loss is needed for \(1\le p\le2\), whereas a
dimension-free factor appears for \(p\ge2\).
\vskip 5pt\noindent
\begin{thm}[{\cite{Zha26b}}]\label{thm:schatten-sum-vs-qsym-p}
	Let $1\le p\le \infty$ and let $A_1,\dots,A_m\in\mathbb M_n$. Then
	\begin{align*}
		\Bigl\lVert \sum_{k=1}^m A_k \Bigr\rVert_p
		\ &\le\ \,
		\Bigl\lVert \sum_{k=1}^m \bigl|A_k\bigr|_{\mathrm{qsym}} \Bigr\rVert_p, \quad\quad\quad\,\, 1\le p\le 2;\\
		\Bigl\lVert \sum_{k=1}^m A_k \Bigr\rVert_p
		\ &\le\ 2^{\frac{1}{2}-\frac{1}{p}}\,
		\Bigl\lVert \sum_{k=1}^m \bigl|A_k\bigr|_{\mathrm{qsym}} \Bigr\rVert_p, \quad 2\le p\le \infty.
	\end{align*}
	Moreover, the constants $1$ and $ 2^{\frac{1}{2}-\frac{1}{p}}$ are sharp.
\end{thm}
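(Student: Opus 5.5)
The plan is to reduce everything to a Hermitian sandwich estimate. The basic tool is elementary: if $H$ is Hermitian and $-Q\le H\le Q$ with $Q\ge0$, then $\|H\|\le\|Q\|$ for every symmetric norm. Indeed, writing $H=Q^{1/2}KQ^{1/2}$ with $K$ a Hermitian contraction, the contraction property of symmetric norms from Section~\ref{s:symmetric-norms} gives $\|H\|=\|Q^{1/2}KQ^{1/2}\|\le\|Q\|$. The noncommutative input is the operator convexity of $X\mapsto X^*X$. For every $Z\in\bM_n$ and $\theta\in\bR$ it gives
\[
\bigl(\Ree(e^{i\theta}Z)\bigr)^2\le \frac{|Z|^2+|Z^*|^2}{2}=|Z|_{\qsym}^2 .
\]
Since $t\mapsto t^{1/2}$ is operator monotone, we get $|\Ree(e^{i\theta}Z)|\le |Z|_{\qsym}$. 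Put $S=\sum_k A_k$ and $Q=\sum_k|A_k|_{\qsym}$. Because $\pm\Ree(e^{i\theta}A_k)\le|\Ree(e^{i\theta}A_k)|$, summing over $k$ yields $-Q\le \Ree(e^{i\theta}S)\le Q$ for all $\theta$. Hence $\|\Ree(e^{i\theta}S)\|_p\le\|Q\|_p$ uniformly in $\theta$, and the same holds for any Hermitian matrix assembled linearly, with matching coefficients, from the real and imaginary parts of the $A_k$.

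The second step converts this into a bound for $\|S\|_p$ by a $2n\times 2n$ dilation that packages $\Ree S$ and $\Imm S$ together. I would take
\[
M=\begin{bmatrix}\Ree S&\Imm S\\ \Imm S&-\Ree S\end{bmatrix}=\sum_k M_k,\qquad M_k=\begin{bmatrix}\Ree A_k&\Imm A_k\\ \Imm A_k&-\Ree A_k\end{bmatrix}.
\]
A direct computation gives
\[
M^2=\begin{bmatrix}|S|_{\qsym}^2&[\Ree S,\Imm S]\\ -[\Ree S,\Imm S]&|S|_{\qsym}^2\end{bmatrix}.
\]
Applying the unitary $\tfrac1{\sqrt2}\begin{bmatrix}I&iI\\ iI&I\end{bmatrix}$ should block-diagonalize $M^2$ into $\operatorname{diag}(|S|^2,|S^*|^2)$, using $[\Ree S,\Imm S]=\tfrac{i}{2}(|S|^2-|S^*|^2)$. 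If so, $\Tr|M|^p=\Tr|S|^p+\Tr|S^*|^p=2\|S\|_p^p$. By the same computation, $|M_k|$ is unitarily equivalent to $\operatorname{diag}(|A_k|,|A_k^*|)$, so the naive bound $\pm M\le\sum_k|M_k|$ only reproduces Thompson-type estimates. Instead I would bound each $M_k$ by the doubled quadratic modulus. The aim is to show $-\widetilde Q\le M\le\widetilde Q$ with $\widetilde Q$ having the spectrum of $Q\oplus Q$, possibly up to a correction. This would give $2\|S\|_p^p\le 2\|Q\|_p^p$ for all $p$, which is too strong for $p>2$, so the correction must be where the factor $2^{1/2-1/p}$ enters.

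Concretely, for $1\le p\le2$ I would instead split $|M|^2$ through the pinching and use the concavity of $t\mapsto t^{p/2}$ (Rotfel'd \eqref{Rot}). For $p\ge2$ I would use convexity (the trace Jensen inequality \eqref{vNconv}) together with McCarthy's inequality $\Tr(X+Y)^{q}\ge\Tr X^q+\Tr Y^q$ for $q\ge1$. The case $p=\infty$ would come from the operator-level bound $|\Ree(e^{i\theta}S)|\le Q$ combined with $\|S\|_\infty\le\sqrt2\,\max(\|\Ree S\|_\infty,\|\Imm S\|_\infty)$-type estimates. The cases $p=2$ and $p=\infty$ would then be interpolated by a Riesz--Thorin/Hölder argument on the vector of singular values to give $2^{1/2-1/p}$.

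Sharpness I would check on small examples. For $p\le2$, a single normal $A_1$ gives equality, since then $|A_1|_{\qsym}=|A_1|$. For $p\ge2$, I would take $m=2$, $A_1=E_{12}$ and $A_2=E_{21}$ in $\bM_2$, or a variant where $|S|_{\qsym}$ is as small as possible relative to $|S|$. The main obstacle is the middle step: finding the right Hermitian packaging in which each summand is dominated, in operator order, by a copy of $|A_k|_{\qsym}$ rather than by $|A_k|\oplus|A_k^*|$. The pointwise-in-$\theta$ inequality $|\Ree(e^{i\theta}A_k)|\le|A_k|_{\qsym}$ is the only clean operator inequality I have, and averaging over $\theta$ naively loses a factor $\sqrt2$ already at $p=2$. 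So I expect the argument to need a genuinely two-variable combination of $\theta$ and $\theta+\pi/2$ rather than a single real part.
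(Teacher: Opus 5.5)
There is a genuine gap. The proposal does not establish the inequality for any $p>1$, and the one operator inequality it rests on cannot do so.

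\textbf{Why the real-part bounds are not enough.} The bounds $|\Ree(e^{i\theta}A_k)|\le|A_k|_{\qsym}$ carry too little information about $|A_k|_{\qsym}$. Take the nilpotent $S=\begin{pmatrix}1&i\\ i&-1\end{pmatrix}$. Then $\Ree S=\operatorname{diag}(1,-1)$, $\Imm S=\begin{pmatrix}0&1\\1&0\end{pmatrix}$, and $|\Ree(e^{i\theta}S)|=I_2$ for every $\theta$. So $Q=I_2$ satisfies all your constraints, yet $\|S\|_p=2=2^{1-1/p}\|I_2\|_p$. This exceeds the target constant for every $p>1$; the true value is $|S|_{\qsym}=\sqrt2\,I_2$. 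Any argument that uses $|A_k|_{\qsym}$ only through these real-part bounds therefore fails.

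\textbf{Your $p=\infty$ step.} The estimate $\|S\|_\infty\le\sqrt2\max(\|\Ree S\|_\infty,\|\Imm S\|_\infty)$ is false for this $S$. Real-part information only yields the numerical-radius constant $2$.

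\textbf{Your $p\le2$ step.} Pinching plus Rotfel'd only gives $\|S\|_p\le\||S|_{\qsym}\|_p$. You would then still need $\||S|_{\qsym}\|_p\le\|\sum_k|A_k|_{\qsym}\|_p$, which is Problem~\ref{prob:schatten-qsym-vs-qsym-p} with constant $1$. For $1<p<2$ this is a stronger statement that the paper does not settle. At $p=2$ the pinching step is an identity, since $\||S|_{\qsym}\|_2=\|S\|_2$, so nothing has been gained.

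\textbf{The missing idea.} You need a trace-level bound on the cross terms that uses $|A|$ and $|A^*|$ jointly. This is how the paper proves $p=2$, the only case proved there; the rest is quoted from \cite{Zha26b}. Expand $\|\sum_kA_k\|_2^2=\sum_k\|A_k\|_2^2+2\sum_{i<j}\Ree\Tr(A_i^*A_j)$ and use $\||A_k|_{\qsym}\|_2=\|A_k\|_2$. Then bound
$|\Tr(A_i^*A_j)|\le\frac12\bigl(\Tr|A_i||A_j|+\Tr|A_i^*||A_j^*|\bigr)\le\Tr(|A_i|_{\qsym}|A_j|_{\qsym})$.
The first inequality is trace Cauchy--Schwarz (Lemma~\ref{lemCS1}). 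The second is Lieb's joint concavity of $(X,Y)\mapsto\Tr X^{1/2}Y^{1/2}$ (Lemma~\ref{lem:lieb}).

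\textbf{Where your dilation does work.} At $p=\infty$ the dilation succeeds once you dominate with the full quadratic information. From $|A_k|^2,|A_k^*|^2\le2|A_k|_{\qsym}^2$ you get $|A_k|,|A_k^*|\le\sqrt2|A_k|_{\qsym}$. Hence $\pm\begin{pmatrix}0&S\\ S^*&0\end{pmatrix}\le\sum_k(|A_k^*|\oplus|A_k|)\le\sqrt2(Q\oplus Q)$, which gives $\|S\|\le\sqrt2\|Q\|$ for every symmetric norm. This is the ``correction'' you were looking for, and it settles the sharp case $p=\infty$.

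\textbf{Interpolation.} Passing from $p=2$ and $p=\infty$ to the constant $2^{1/2-1/p}$ by Riesz--Thorin is not justified. The right-hand side is a nonlinear functional of the $A_k$, not an $L^p$-type norm in an interpolation scale, so intermediate $p$ need their own spectral argument.

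\textbf{Sharpness.} Your example for $p\ge2$ is not extremal. For $A_1=E_{12}$, $A_2=E_{21}$ one gets $Q=\sqrt2 I_2$ and ratio $2^{-1/2}$. The extremizer is the single nilpotent $A_1=E_{12}$: here $|A_1|_{\qsym}=2^{-1/2}I_2$ and $\|A_1\|_p/\||A_1|_{\qsym}\|_p=2^{1/2-1/p}$.
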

\vskip 5pt\noindent

Although Theorem~\ref{thm:schatten-sum-vs-qsym-p} holds for the whole
Schatten scale, the Hilbert--Schmidt case \(p=2\) has a special flavor. In
this case the inequality admits a particularly elegant proof, based only on
two simple observations. We record these observations as lemmas before giving
the proof.
	\vskip 5pt\noindent
\begin{lem}\label{lemCS1}
	For all $A,B\in\mathbb M_n$,
	\begin{equation}\label{eq:trace-cs}
		|{\mathrm{Tr\,}}(A^*B)|^2 \le\tr(\absv{A}\,\absv{B})\,\tr(\absv{A^*}\,\absv{B^*}).
	\end{equation}
	Consequently,
	\begin{equation}\label{eq:trace-amgm}
		|\tr(A^*B)|\ \le\ \frac{\tr(\absv{A}\,\absv{B})+\tr(\absv{A^*}\,\absv{B^*})}{2}.
	\end{equation}
\end{lem}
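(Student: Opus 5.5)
The approach is the Cauchy--Schwarz inequality for the Hilbert--Schmidt inner product, applied after splitting $A^*B$ through polar decompositions, in the same spirit as the proof of Lemma~\ref{lemVNRI}.

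Write $A=U|A|$ and $B=V|B|$ with $U,V$ unitary. Recall the identities $|A^*|=U|A|U^*$, hence $|A^*|^{1/2}=U|A|^{1/2}U^*$, and likewise for $B$. Then
\[
A^*B=|A|U^*V|B|=|A|^{1/2}\bigl(|A|^{1/2}U^*V|B|^{1/2}\bigr)|B|^{1/2}.
\]
We rewrite the middle factor using $|A|^{1/2}U^*=U^*|A^*|^{1/2}$ and $V|B|^{1/2}=|B^*|^{1/2}V$. This gives
\[
\Tr(A^*B)=\Tr\bigl(|B|^{1/2}|A|^{1/2}\,U^*\,|A^*|^{1/2}|B^*|^{1/2}\,V\bigr).
\]

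Now set $X=|A|^{1/2}|B|^{1/2}$ and $Y=U^*|A^*|^{1/2}|B^*|^{1/2}V$. Then $\Tr(A^*B)=\Tr(X^*Y)$, and the Cauchy--Schwarz inequality gives $|\Tr(X^*Y)|^2\le \Tr(X^*X)\,\Tr(Y^*Y)$. The two factors are computed as follows:
\[
\Tr(X^*X)=\Tr\bigl(|B|^{1/2}|A||B|^{1/2}\bigr)=\Tr(|A||B|),
\]
\[
\Tr(Y^*Y)=\Tr\bigl(V^*|B^*|^{1/2}|A^*||B^*|^{1/2}V\bigr)=\Tr(|A^*||B^*|).
\]
This yields \eqref{eq:trace-cs}.

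Both traces are nonnegative, being traces of products of two positive matrices. Hence \eqref{eq:trace-amgm} follows from \eqref{eq:trace-cs} by the scalar AM--GM inequality $\sqrt{st}\le (s+t)/2$.

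The only delicate step is the intertwining $|A|^{1/2}U^*=U^*|A^*|^{1/2}$, and its analogue for $B$. This identity holds because $U^*|A^*|^{1/2}U=(U^*|A^*|U)^{1/2}=|A|^{1/2}$. It is also why I take $U,V$ unitary in the polar decompositions rather than mere partial isometries.
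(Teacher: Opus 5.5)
Your proof is correct and is essentially the paper's own argument. The paper applies Cauchy--Schwarz to the same splitting $\Tr(A^*B)=\Tr\bigl((|A|^{1/2}U^*V|B|^{1/2})(|B|^{1/2}|A|^{1/2})\bigr)$, and your $Y$ is exactly $|A|^{1/2}U^*V|B|^{1/2}$ rewritten via the intertwining identities, which the paper uses only implicitly when evaluating $\||A|^{1/2}U^*V|B|^{1/2}\|_2^2=\Tr(|A^*||B^*|)$.
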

\vskip 5pt\noindent

\begin{proof}
	Let $A=U\absv{A}$ and $B=V\absv{B}$ be polar decompositions.
	Then 
	\begin{align*}
		|\tr(A^*B)|&=|\tr(\absv{A}U^*V\absv{B})| \\
		&=|\tr\Big((\absv{A}^\frac{1}{2}U^*V\absv{B}^\frac{1}{2})(\absv{B}^\frac{1}{2}\absv{A}^\frac{1}{2})\Big)|\\
		&\le\left\| \absv{A}^\frac{1}{2}U^*V\absv{B}^\frac{1}{2}\right\| _2\left\| \absv{B}^\frac{1}{2}\absv{A}^\frac{1}{2}\right\| _2\text{ (By Cauchy-Schwarz inequality)}\\
		&=\left( \tr(\absv{A^*}\,\absv{B^*})\right)^\frac{1}{2} \left( \tr(\absv{A}\,\absv{B})\right)^\frac{1}{2} 
	\end{align*}
	which is equivalent to \eqref{eq:trace-cs}.
	
	The bound \eqref{eq:trace-amgm} follows from \eqref{eq:trace-cs} by the scalar AM--GM inequality.
\end{proof}
Lieb's theorem \cite{Lie73} asserts that the map $(X,Y)\mapsto \tr(X^\frac{1}{2}Y^\frac{1}{2})$ is jointly concave on $\mathbb M_n^+\times\mathbb M_n^+$; for a shorter proof, see also \cite[Chapter~4]{Bha07}. That is,
\vskip 5pt\noindent
\begin{lem}\label{lem:lieb}
	Let $X_1,X_2,Y_1,Y_2\in\mathbb M_n^+$. Then
	\begin{equation*}
		\tr\!\left(\left(\frac{X_1+X_2}{2}\right)^\frac{1}{2}\left(\frac{Y_1+Y_2}{2}\right)^\frac{1}{2}\right)
		\ \ge\ \frac{\tr(X_1^\frac{1}{2}Y_1^\frac{1}{2})+\tr(X_2^\frac{1}{2}Y_2^\frac{1}{2})}{2}.
	\end{equation*}
\end{lem}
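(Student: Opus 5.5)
The plan is to linearize the problem by passing to the Hilbert space $\bM_n$ equipped with the Hilbert--Schmidt inner product $\langle X,Y\rangle=\Tr X^*Y$. Then we use the joint concavity of the operator geometric mean, which comes from a $2\times 2$ block characterization.

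First, by continuity of both sides in $X_i,Y_i$, I may replace $X_i,Y_i$ by $X_i+\eps I$, $Y_i+\eps I$ and let $\eps\to0$ at the end. So assume all four matrices are positive definite. For $X\in\bM_n^+$ define the left and right multiplication operators on $\bM_n$,
\[
L_X(T)=XT,\qquad R_Y(T)=TY.
\]
Both are positive operators for the Hilbert--Schmidt inner product, and they commute. They depend linearly on $X$ and on $Y$ respectively, and they satisfy $L_X^{1/2}=L_{X^{1/2}}$ and $R_Y^{1/2}=R_{Y^{1/2}}$. Hence
\[
\Tr X^{1/2}Y^{1/2}=\langle I, X^{1/2}\,I\,Y^{1/2}\rangle=\langle I, L_X^{1/2}R_Y^{1/2}(I)\rangle .
\]
For commuting positive definite operators $\mathcal A,\mathcal B$, the product $\mathcal A^{1/2}\mathcal B^{1/2}$ equals the geometric mean
\[
\mathcal A\#\mathcal B:=\mathcal A^{1/2}\bigl(\mathcal A^{-1/2}\mathcal B\mathcal A^{-1/2}\bigr)^{1/2}\mathcal A^{1/2}.
\]
So it suffices to show that $(\mathcal A,\mathcal B)\mapsto \mathcal A\#\mathcal B$ is jointly midpoint concave in the operator order. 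The linearity of $X\mapsto L_X$ and $Y\mapsto R_Y$ then transfers this concavity to the trace functional, after pairing with $I$.

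The key step, and the main obstacle, is the extremal characterization
\[
\mathcal A\#\mathcal B=\max\left\{\mathcal Z=\mathcal Z^*:\ \begin{bmatrix}\mathcal A&\mathcal Z\\ \mathcal Z&\mathcal B\end{bmatrix}\ge0\right\},
\]
where the maximum is taken in the operator order. I would prove it by congruence with $\mathcal A^{-1/2}\oplus\mathcal A^{-1/2}$, which reduces to the case $\mathcal A=I$. There the block condition becomes $\mathcal Z^2\le\mathcal C:=\mathcal A^{-1/2}\mathcal B\mathcal A^{-1/2}$ by the Schur complement. Operator monotonicity of $t^{1/2}$ then gives $\mathcal Z\le|\mathcal Z|\le\mathcal C^{1/2}$. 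Equality is attained at $\mathcal Z=\mathcal C^{1/2}$, which after undoing the congruence is $\mathcal A\#\mathcal B$.

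Joint concavity now follows directly. Take $\mathcal Z_i=\mathcal A_i\#\mathcal B_i$ for $i=1,2$. Averaging the two positive block matrices shows that $\frac{\mathcal Z_1+\mathcal Z_2}{2}$ is admissible for the pair $\bigl(\frac{\mathcal A_1+\mathcal A_2}{2},\frac{\mathcal B_1+\mathcal B_2}{2}\bigr)$. It is therefore dominated by their geometric mean. Apply this with $\mathcal A_i=L_{X_i}$ and $\mathcal B_i=R_{Y_i}$, noting that $\frac{L_{X_1}+L_{X_2}}{2}=L_{(X_1+X_2)/2}$ and similarly for $R$. Taking $\langle I,\cdot\, I\rangle$ of the resulting operator inequality yields the inequality of Lemma~\ref{lem:lieb}.
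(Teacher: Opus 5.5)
Your proof is correct. It is essentially the argument the paper relies on: the paper does not prove Lemma~\ref{lem:lieb} itself but cites Lieb \cite{Lie73} and the shorter proof in \cite[Chapter~4]{Bha07}, which is exactly Ando's route through the $2\times 2$ block characterization and joint concavity of $\mathcal A\#\mathcal B$, applied to the commuting operators $L_X$, $R_Y$ on $\bM_n$ with the Hilbert--Schmidt inner product; you specialize it to exponent $1/2$ and pair with the vector $I$. Your supporting details also check out: the $\eps I$ regularization, $L_X^{1/2}=L_{X^{1/2}}$, $\mathcal A\#\mathcal B=\mathcal A^{1/2}\mathcal B^{1/2}$ for commuting pairs, the Schur complement and L\"owner--Heinz step, and the observation that the averaged pair $L_{(X_1+X_2)/2}$, $R_{(Y_1+Y_2)/2}$ still commutes.
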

\vskip 5pt\noindent

Next, we present a sharp inequality for the Frobenius norm. Although it is subsumed by Theorem~\ref{thm:schatten-sum-vs-qsym-p}, the argument in this special case is particularly short and elegant.

\begin{proof}[Proof of the case $p=2$ in Theorem~\ref{thm:schatten-sum-vs-qsym-p}]
	Clearly,
	\begin{equation}\label{eq:Pk-frob}
		\|\qsma{A_k}\|_2^2=\|A_k\|_2^2.
	\end{equation}
	For any $i\neq j$, by using Lemma \ref{lemCS1} and Lemma~\ref{lem:lieb} with
	$X_1=\absv{A_i}^2$, $X_2=\absv{A_i^*}^2$, $Y_1=\absv{A_j}^2$ and $Y_2=\absv{A_j^*}^2$, we have
	\begin{align*}
		|\tr(A_i^*A_j)|
		&\le \frac{\tr(\absv{A_i}\,\absv{A_j})+\tr(\absv{A_i^*}\,|A_j^*|)}{2}\\
		&\le \tr\!\left(\left(\frac{|A_i|^2+\absv{A_i^*}^2}{2}\right)^\frac{1}{2}\left(\frac{\absv{A_j}^2+|A_j^*|^2}{2}\right)^\frac{1}{2}\right)\\
		&=\tr\left(\qsma{A_i}\qsma{A_j}\right). 
	\end{align*}
	Hence,
	\begin{align*}
		\|\sum_{k=1}^m A_k\|_2^2
		&=\sum_{k=1}^m \|A_k\|_2^2+2\sum_{i<j}\Re\,\tr(A_i^*A_j) \\
		&\le\sum_{k=1}^m \|A_k\|_2^2+2\sum_{i<j}\left| \tr(A_i^*A_j)\right| \\
		&\le
		\sum_{k=1}^m \|\qsma{A_k}\|_2^2+2\sum_{i<j}\tr\left(\qsma{A_i}\qsma{A_j}\right)\\
		&=\Big\|\sum_{k=1}^m \qsma{A_k}\Big\|_2^2,
	\end{align*}
	where we used \eqref{eq:Pk-frob}. Taking square roots yields the desired result.
	
\end{proof}

The preceding theorem compares the ordinary sum \(\sum_k A_k\) with the sum
of the individual quadratic symmetric moduli. A more intrinsic triangle-type
question is obtained by applying the quadratic symmetric modulus to the total
sum itself. This leads to the following dimension-sensitive optimization
problem.
\begin{prob}[{\cite{Zha26b}}]\label{prob:schatten-qsym-vs-qsym-p}
	Let $A_1,\dots,A_m\in\mathbb M_n$. Determine the optimal constant $c_p^{\mathrm{qsym}}(m,n)$ such that
	\[
	\Bigl\lVert \bigl|\sum_{k=1}^m A_k\bigr|_{\mathrm{qsym}} \Bigr\rVert_p
	\ \le\ c_p^{\mathrm{qsym}}(m,n)\,
	\Bigl\lVert \sum_{k=1}^m \bigl|A_k\bigr|_{\mathrm{qsym}} \Bigr\rVert_p .
	\]
\end{prob}
For Problem~\ref{prob:schatten-qsym-vs-qsym-p}, Zhang \cite{Zha26b} obtained the following results:
\begin{enumerate}
	\item $c_{1}^{\mathrm{qsym}}(m,n)=1$;
	\item $c_{2}^{\mathrm{qsym}}(m,n)\le \sqrt{\frac{1+\sqrt{\min\{m,2n\}}}{2}}$;
	\item $c_{\infty}^{\mathrm{qsym}}(m,n)\le \sqrt{\min\{m,2n\}}$;
	\item $c_{p}^{\mathrm{qsym}}(m,n)\le \bigl(\min\{m,2n\}\bigr)^{\frac12-\frac{1}{2p}}$ for all $1\le p\le\infty$.
\end{enumerate}

\section{The arithmetic symmetric modulus and triangle inequalities}\label{s:symmetric}

We now turn from the quadratic symmetric modulus to its arithmetic analogue.
For a matrix \(Z\in\bM_n\), the polar decomposition
\[
Z=|Z^*|U=U|Z|
\]
shows that the right modulus \(|Z|\) and the left modulus \(|Z^*|\) are two
natural positive representatives of \(Z\). It is therefore natural to average
them and consider the arithmetic symmetric modulus
\[
|Z|_{\sym}:=\frac{|Z|+|Z^*|}{2}.
\]
This modulus is more closely tied to the two-sided structure of \(Z\) than the
ordinary right modulus, but it is also more delicate than the quadratic
symmetric modulus studied in Section~\ref{s:quadratic}. The results in this
section illustrate both phenomena: some Thompson-type inequalities survive
after suitable modifications, while the most direct analogue fails in general.
For related developments, see also \cite{BL26c}.

\subsection{Operator version} As shown in the preceding sections, Thompson's inequality holds for the
ordinary right modulus and for the quadratic symmetric modulus. It is therefore
natural to ask whether the same phenomenon persists for the arithmetic
symmetric modulus. The answer, however, is negative. 
	
	\vskip 5pt\noindent
\begin{prop}[\cite{Zha26a}]
		Let $A,B\in \mathbb{M}_n$.
		In general, there do not exist unitaries $U,V$ such that
		\[
		|A+B|_{\mathrm{sym}}\le U|A|_{\mathrm{sym}}U^*+V|B|_{\mathrm{sym}}V^* .
		\] Indeed, there exist $A,B\in\mathbb{M}_2$ such that
		\[
		\bigl\||A+B|_{\mathrm{sym}}\bigr\|_{\infty}
		>
		\bigl\||A|_{\mathrm{sym}}\bigr\|_{\infty}+\bigl\||B|_{\mathrm{sym}}\bigr\|_{\infty}.
		\]
	\end{prop}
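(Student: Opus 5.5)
The first assertion follows from the second. Suppose there were unitaries $U,V$ with $|A+B|_{\sym}\le U|A|_{\sym}U^*+V|B|_{\sym}V^*$. Both sides are positive, and symmetric norms respect the order on $\bM_n^+$ (Section~\ref{s:symmetric-norms}). Applying $\|\cdot\|_\infty$ and then the triangle inequality and unitary invariance would give
\[
\bigl\||A+B|_{\sym}\bigr\|_\infty\le \bigl\||A|_{\sym}\bigr\|_\infty+\bigl\||B|_{\sym}\bigr\|_\infty .
\]
So it suffices to exhibit $A,B\in\bM_2$ violating this norm inequality.

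To find the example I will use rank-one matrices, for which the symmetric modulus is explicit. If $X=s\,pq^*$ with unit vectors $p,q$ and $s>0$, then $|X|=s\,qq^*$ and $|X^*|=s\,pp^*$. Hence
\[
|X|_{\sym}=\tfrac{s}{2}(pp^*+qq^*),\qquad \bigl\||X|_{\sym}\bigr\|_\infty=\tfrac{s}{2}\bigl(1+|\langle p,q\rangle|\bigr).
\]
This is smallest, equal to $s/2$, when $p\perp q$, that is, when $X$ is a rank-one nilpotent. So the plan is to take $A$ and $B$ both rank-one nilpotent of norm $1$. The right-hand side is then only $1$, and I need $\bigl\||A+B|_{\sym}\bigr\|_\infty>1$.

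Aligned choices do not work: $A=E_{12}$ with $B=cE_{21}$ gives equality, since $A+B$ then has commuting left and right moduli. So I rotate $B$ and take
\[
A=\begin{pmatrix}0&1\\0&0\end{pmatrix},\qquad B=\frac12\begin{pmatrix}1&1\\-1&-1\end{pmatrix}.
\]
Here $B=uv^*$ with $u=(1,-1)/\sqrt2$ and $v=(1,1)/\sqrt2$, so $\bigl\||A|_{\sym}\bigr\|_\infty=\bigl\||B|_{\sym}\bigr\|_\infty=\tfrac12$. For $Z=A+B=\tfrac12\begin{pmatrix}1&3\\-1&-1\end{pmatrix}$, the identities $s_1^2+s_2^2=\|Z\|_2^2=3$ and $s_1s_2=|\det Z|=\tfrac12$ give $s_1=(1+\sqrt2)/\sqrt2\approx1.707$ and $s_2\approx0.293$.

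Let $h$ be a unit top right singular vector of $Z$. Then
\[
\bigl\||Z|_{\sym}\bigr\|_\infty\ge\langle h,|Z|_{\sym}h\rangle=\tfrac12\bigl(s_1+\langle h,|Z^*|h\rangle\bigr),
\]
so it is enough to show $\langle h,|Z^*|h\rangle>2-s_1\approx0.293$. Writing $k$ for the top left singular vector, one has $\langle h,|Z^*|h\rangle=s_1|\langle h,k\rangle|^2+s_2\bigl(1-|\langle h,k\rangle|^2\bigr)$. The bound therefore reduces to showing that $h$ and $k$ are not orthogonal, which is checked by computing the $2\times2$ eigenvectors of $Z^*Z$ and $ZZ^*$.

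This explicit verification is the main obstacle. If the margin fails for this particular $B$, I would replace it by a rotated nilpotent $R_\theta E_{12}R_\theta^*$ and compute the largest eigenvalue of the $2\times2$ matrix $|Z|_{\sym}$ in closed form as a function of $\theta$. Choosing $\theta$ to maximize it should then give strict inequality, since only the aligned angles produce equality. Alternatively, $B$ can be scaled slightly away from norm $1$ if that is needed to break the equality case.
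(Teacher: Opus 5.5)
Your reduction of the operator statement to the norm statement is correct, and so is the rank-one formula $\bigl\||X|_{\sym}\bigr\|_\infty=\frac{s}{2}(1+|\langle p,q\rangle|)$. The gap is that your example gives equality, not strict inequality. For $Z=A+B$ one has $Z^*Z=\begin{pmatrix}1/2&1\\1&5/2\end{pmatrix}$, $\det(Z^*Z)=\frac14$ and $s_1+s_2=2$. The $2\times 2$ formula $\sqrt P=(P+\sqrt{\det P}\,I)/\Tr\sqrt P$ then yields
\[
|Z|=\frac12\begin{pmatrix}1&1\\1&3\end{pmatrix},\qquad
|Z^*|=\frac12\begin{pmatrix}3&-1\\-1&1\end{pmatrix},\qquad
|Z|_{\sym}=I .
\]
Hence $\bigl\||A+B|_{\sym}\bigr\|_\infty=1=\bigl\||A|_{\sym}\bigr\|_\infty+\bigl\||B|_{\sym}\bigr\|_\infty$. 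In the language of your reduction, this is exactly the case you needed to exclude: the top eigenvectors $(1,1+\sqrt2)$ of $|Z|$ and $(1,1-\sqrt2)$ of $|Z^*|$ are orthogonal.

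Your fallbacks cannot repair this, for a structural reason. Any two rank-one nilpotents in $\bM_2$ have traceless sum. For traceless $Z=H+iK\in\bM_2$, with $H,K$ traceless Hermitian, $H^2$ and $K^2$ are scalar, so $Z^*Z+ZZ^*=2(H^2+K^2)=\|Z\|_2^2I$. Thus $ZZ^*$ has the same eigenvectors as $Z^*Z$ with the two eigenvalues swapped. It follows that $|Z|+|Z^*|=(s_1+s_2)I$, i.e.\ $|Z|_{\sym}=\frac12\|Z\|_1 I$. On traceless $2\times2$ matrices the functional $X\mapsto\bigl\||X|_{\sym}\bigr\|_\infty$ is therefore just $\frac12\|\cdot\|_1$, which is subadditive. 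So no rotation angle $\theta$ and no rescaling of $B$ can produce a strict inequality. Your claim that only aligned angles give equality is also false, as your own non-aligned pair shows.

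The missing ingredient is a nonzero trace. For instance, $A=E_{12}$ and $B=tI$ with $t>0$ give right-hand side $\frac12+t$, while $Z=tI+E_{12}$ has
\[
|Z|_{\sym}=\frac{1}{2\sqrt{1+4t^2}}\begin{pmatrix}1+4t^2&2t\\2t&1+4t^2\end{pmatrix}.
\]
Its largest eigenvalue $\frac{1+2t+4t^2}{2\sqrt{1+4t^2}}$ exceeds $\frac12+t$, because $(1+u+u^2)^2-(1+u)^2(1+u^2)=u^2>0$ with $u=2t$. For $t=\frac12$ this reads $\frac{3}{2\sqrt2}>1$.
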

\vskip 5pt\noindent
	Although the direct Thompson-type inequality fails, it is natural to ask whether it can be recovered by allowing a uniform multiplicative constant. 
The answer is negative, as recently observed by Matthew Colbrook \cite{Col26}.

\vskip 5pt\noindent
\begin{rem}
Fix \(n\ge 3\). There is no finite constant \(C(n)>0\) such that, for every \(A,B\in\mathbb{M}_n\), there exist unitary matrices \(U,V\in\mathbb{M}_n\) satisfying
\begin{equation*}\label{eq:weak-triangle-sym}
    |A+B|_{\mathrm{sym}}
    \le
    C(n)\left(
    U|A|_{\mathrm{sym}}U^*
    +
    V|B|_{\mathrm{sym}}V^*
    \right).
\end{equation*}
In fact, it suffices to establish this for \(n=3\) and for matrices \(A\) and \(B\) of rank one.
\end{rem}
\vskip 5pt\noindent

The condition-number estimates for the right modulus also yield corresponding
bounds for the arithmetic symmetric modulus. In particular, Theorems~\ref{thm:Lee-triangle-condition-number}
and~\ref{thm:Zhang-triangle-condition-number} immediately imply the following
two consequences.
\vskip 5pt\noindent
\begin{prop}
	 	Let \(A_1,\dots,A_m\in \mathbb M_n\) be invertible matrices whose condition numbers are bounded by \(\omega\ge1\). Then
	\begin{equation*}
	\bigl|A_1+\cdots+A_m\bigr|_{\mathrm{sym}}
		\le
		\frac{\omega+1}{2\sqrt{\omega}}\,
		\bigl(|A_1|_{\mathrm{sym}}+\cdots+|A_m|_{\mathrm{sym}}\bigr),
	\end{equation*}
	where the constant is sharp as a function of $\omega$.
\end{prop}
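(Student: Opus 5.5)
The plan is to apply Theorem~\ref{thm:Lee-triangle-condition-number} twice, once to the family $A_1,\dots,A_m$ and once to the adjoint family $A_1^*,\dots,A_m^*$, and then average. For the first family, the hypothesis $\kappa(A_k)\le\omega$ gives directly
\[
|A_1+\cdots+A_m|\le \frac{\omega+1}{2\sqrt{\omega}}\bigl(|A_1|+\cdots+|A_m|\bigr).
\]
For the second family, note that $\|A_k^*\|_\infty=\|A_k\|_\infty$ and $\|(A_k^*)^{-1}\|_\infty=\|(A_k^{-1})^*\|_\infty=\|A_k^{-1}\|_\infty$. Hence $\kappa(A_k^*)=\kappa(A_k)\le\omega$, and each $A_k^*$ is invertible. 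The same theorem then yields
\[
|(A_1+\cdots+A_m)^*|=|A_1^*+\cdots+A_m^*|\le \frac{\omega+1}{2\sqrt{\omega}}\bigl(|A_1^*|+\cdots+|A_m^*|\bigr).
\]
Adding the two operator inequalities and dividing by $2$ gives exactly the stated bound for $|\cdot|_{\sym}$, since both sides are linear in the moduli.

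Sharpness is the only step needing care. The natural attempt is to take the extremal example from \cite{Zha26a} for Theorem~\ref{thm:Lee-triangle-condition-number} and check that it still forces the constant $\frac{\omega+1}{2\sqrt\omega}$ after symmetrization. The failure of the inequality with a smaller constant $c$ for the right modulus does not automatically transfer to $|\cdot|_{\sym}$, so I would look for an extremal configuration that is symmetric under adjoints.

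Concretely, I would try $m=2$ in $\bM_2$ with matrices for which $|A_k|=|A_k^*|$, or for which the adjoint configuration is unitarily equivalent to the original one by a common unitary $W$, meaning $A_k^*=WA_kW^*$ up to relabeling. In such a configuration $|A|_{\sym}$ and $|A+B|_{\sym}$ reduce to the right-modulus quantities, and the extremal ratio persists. A candidate is a pair of invertible matrices with singular values $\{1,\omega^{\mathrm{-}1}\}$-type scaling, for example
\[
A=\begin{pmatrix}1&t\\0&s\end{pmatrix},\qquad B=\begin{pmatrix}1&-t\\0&s\end{pmatrix},
\]
with $t,s$ tuned so that both condition numbers equal $\omega$. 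One then tests the inequality on a suitable vector, typically $e_1$ or an eigenvector of $|A|+|B|$, to show that the ratio of quadratic forms tends to $\frac{\omega+1}{2\sqrt\omega}$.

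I expect this sharpness verification to be the main obstacle. If no adjoint-symmetric extremizer is found, I would instead embed the extremizer $(A,B)$ as the $2n\times 2n$ block pair $\begin{pmatrix}A&0\\0&A^*\end{pmatrix}$ and $\begin{pmatrix}B&0\\0&B^*\end{pmatrix}$, whose left and right moduli coincide up to a block swap. A variant with off-diagonal blocks, or a direct sum with a unitary flip, would be used so that the symmetric moduli equal the right moduli of the original pair.
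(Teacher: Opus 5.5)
Your proof of the inequality is correct and is exactly the paper's route. The paper treats the proposition as an immediate consequence of Theorem~\ref{thm:Lee-triangle-condition-number}: since $\kappa(A_k^*)=\kappa(A_k)$, apply it to $A_1,\dots,A_m$ and to $A_1^*,\dots,A_m^*$, then average.

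The gap is in the sharpness claim. You give no argument there, only a plan, and the routes you name do not work.

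\textbf{The adjoint-symmetric heuristic is wrong.} Suppose $A_k^*=WA_kW^*$, and write $X=\sum_kA_k$ and $P=\sum_k|A_k|$. Then $|X|_{\sym}=\tfrac12(|X|+W|X|W^*)$ and $\sum_k|A_k|_{\sym}=\tfrac12(P+WPW^*)$. These are averages with $W$-conjugates, not the right-modulus quantities, and the averaging can destroy the extremal ratio.

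\textbf{Your own candidate shows this.} Take $s=1$ and $t=\sqrt\omega-1/\sqrt\omega$, so that $\kappa(A)=\kappa(B)=\omega$. Then $|A+B|=2I$ and $|A|+|B|=\frac{2}{\sqrt{4+t^2}}\operatorname{diag}(2,2+t^2)$. The $(1,1)$ entry forces any admissible constant to be at least $\frac{\sqrt{4+t^2}}{2}=\frac{\omega+1}{2\sqrt\omega}$, so this pair is extremal for the right modulus. Moreover $A^*=WAW^*$ and $B^*=WBW^*$ with $W$ the flip, so the pair is adjoint-symmetric. Yet $|A^*|+|B^*|=\frac{2}{\sqrt{4+t^2}}\operatorname{diag}(2+t^2,2)$, hence
\[
|A|_{\sym}+|B|_{\sym}=\sqrt{4+t^2}\,I\ \ge\ 2I=|A+B|_{\sym},
\]
and the ratio is below $1$. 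In fact, for every $s>0$ in your family all the matrices involved are diagonal, and both diagonal ratios are at most $1$.

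\textbf{Your explicit fallback also fails.} One has $|A\oplus A^*|_{\sym}=|A|_{\sym}\oplus|A|_{\sym}$, so the block-diagonal embedding simply reproduces the symmetric problem.

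\textbf{The missing idea is the Hermitian dilation.} Set $\hat A=\begin{pmatrix}0&A\\A^*&0\end{pmatrix}$.
\begin{itemize}
\item Since $\hat A$ is Hermitian, $|\hat A|_{\sym}=|\hat A|=|A^*|\oplus|A|$.
\item $\hat A$ is invertible with $\kappa(\hat A)=\kappa(A)$, because its singular values are those of $A$, each repeated twice.
\item $\hat A+\hat B=\widehat{A+B}$.
\end{itemize}
Hence the inequality $|\hat A+\hat B|_{\sym}\le c'(|\hat A|_{\sym}+|\hat B|_{\sym})$ contains $|A+B|\le c'(|A|+|B|)$ as its lower-right block. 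So every right-modulus extremizer yields a symmetric-modulus extremizer in twice the dimension.

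With the pair above this is concrete. We get $|\hat A+\hat B|_{\sym}=2I_4$, while
\[
|\hat A|_{\sym}+|\hat B|_{\sym}=\frac{2}{\sqrt{4+t^2}}\operatorname{diag}(2+t^2,2,2,2+t^2).
\]
Comparing the second diagonal entries forces $c'\ge\frac{\sqrt{4+t^2}}{2}=\frac{\omega+1}{2\sqrt\omega}$. This is presumably the ``off-diagonal variant'' you allude to. It must be the Hermitian dilation, and what it achieves is that the symmetric moduli become direct sums of left and right moduli, not the right moduli of the original pair.
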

\vskip 5pt\noindent
\vskip 5pt\noindent
\begin{prop}
	Let \(A_1,\dots,A_m\in \mathbb M_n\) be invertible matrices whose condition numbers are $\kappa_1,\ldots,\kappa_m$.
	Then
	\[
	\bigl|A_1+\cdots+A_m\bigr|_{\mathrm{sym}}
	\le
	\frac{\kappa_1+1}{2\sqrt{\kappa_1}}\,
	|A_1|_{\mathrm{sym}}+\cdots+ 	\frac{\kappa_m+1}{2\sqrt{\kappa_m}}\,|A_m|_{\mathrm{sym}},
	\]
	where the coefficient function
	$
	c(\kappa):=\frac{\kappa+1}{2\sqrt{\kappa}}
	$
	is individually sharp for each term: for every \(\kappa\ge 1\), the coefficient \(c(\kappa)\)
	in front of a term of condition number \(\kappa\) cannot be replaced by any smaller constant,
	even if all the other coefficients are kept unchanged.
\end{prop}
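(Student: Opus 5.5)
The plan is to apply Theorem~\ref{thm:Zhang-triangle-condition-number} twice, once to the matrices $A_k$ and once to their adjoints $A_k^*$, and then average. The key observation is that $\kappa(A_k^*)=\kappa(A_k)$. Indeed, $\|A^*\|_\infty=\|A\|_\infty$ and $\|(A^*)^{-1}\|_\infty=\|(A^{-1})^*\|_\infty=\|A^{-1}\|_\infty$. Each $A_k^*$ is invertible, so the theorem applies to $A_1^*,\dots,A_m^*$ with the same condition numbers $\kappa_1,\dots,\kappa_m$.

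Write $S=A_1+\cdots+A_m$, so $S^*=A_1^*+\cdots+A_m^*$. The first application gives
\[
|S|\le \sum_{k=1}^m c(\kappa_k)|A_k|.
\]
The second gives
\[
|S^*|\le \sum_{k=1}^m c(\kappa_k)|A_k^*|.
\]
Adding these two operator inequalities and dividing by $2$ yields
\[
|S|_{\sym}\le\sum_{k=1}^m c(\kappa_k)|A_k|_{\sym}.
\]
This is the inequality claimed.

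For sharpness I would reuse the extremal examples from Theorem~\ref{thm:Zhang-triangle-condition-number}. Fix $\kappa\ge1$ and a slot $k$. Suppose some $c'<c(\kappa)$ worked for the symmetric modulus in slot $k$, with all other coefficients unchanged. The goal is to transfer the extremal property from $|\cdot|$ to $|\cdot|_{\sym}$. The simplest route is to use extremal examples consisting of normal matrices, for which $|A|=|A^*|$ and hence $|A|_{\sym}=|A|$. If the sum $S$ is also normal, then $|S|_{\sym}=|S|$ and the two statements coincide.

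The main obstacle is therefore to check that the extremizers for Theorem~\ref{thm:Zhang-triangle-condition-number} can be chosen normal. I would first try $2\times2$ examples of the form $A_k=\operatorname{diag}(1,\kappa)$ times a unitary, with the remaining terms designed to cancel. Normality is not obvious, since the failure of $|A+B|\le|A|+|B|$ is a non-normal phenomenon.

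If normality cannot be arranged, I would fall back on self-adjoint-type symmetrizations. One option is the block matrices
\[
\begin{pmatrix}0&A\\A^*&0\end{pmatrix},
\]
whose symmetric modulus equals $\operatorname{diag}(|A^*|,|A|)$. This embeds extremal behaviour of both $|S|$ and $|S^*|$ into a single test. Another option is to make a direct $2\times2$ computation along a vector witnessing the failure for $c'$. Sharpness of the version with the common bound $\omega$ then follows by taking all $\kappa_k=\omega$.
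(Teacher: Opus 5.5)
Your proof of the inequality is correct and is the paper's argument. The paper records the proposition as an immediate consequence of Theorem~\ref{thm:Zhang-triangle-condition-number}: apply it to the $A_k$ and to the $A_k^*$ (which have the same condition numbers), then average. The gap is the sharpness half. You reduce it to an obstacle (finding normal extremizers), leave that open, and list fallbacks without carrying any of them out. So nothing in the proposal shows that $c(\kappa)$ cannot be lowered for $|\cdot|_{\sym}$. An argument is genuinely needed, because the right- and left-modulus bounds need not be nearly attained at a common vector. For instance, take $P=\operatorname{diag}(1,\kappa)$ and $J=\begin{pmatrix}0&1\\1&0\end{pmatrix}$. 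For large $t$, the pair $A_1=JP$, $A_2=tI-JP$ (so $A_1+A_2=tI$) violates the right-modulus inequality with any $c'<c(\kappa)$; test with $y=(1,\kappa^{-1/2})^T$. Yet $|A_1|_{\sym}=\frac{\kappa+1}{2}I$ and $|A_2|_{\sym}=tI-\frac{\kappa+1}{2}J+O(1/t)$, so the same pair satisfies the symmetric inequality with any $c'>1$ for large $t$.

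Your block-matrix fallback closes the gap in one step. Put $\tilde A_k=\begin{pmatrix}0&A_k\\A_k^*&0\end{pmatrix}$. It is Hermitian and invertible, and its singular values are those of $A_k$ repeated twice, so $\kappa(\tilde A_k)=\kappa(A_k)$ and $|\tilde A_k|_{\sym}=|\tilde A_k|=|A_k^*|\oplus|A_k|$. Likewise the symmetric modulus of $\sum_k\tilde A_k$ is $|S^*|\oplus|S|$. Suppose the symmetric inequality held with $c'<c(\kappa)$ in front of a term of condition number $\kappa$. Applying it to the $\tilde A_k$ and compressing to the lower right corner would give $|S|\le c'|A_1|+\sum_{k\ge2}c(\kappa_k)|A_k|$ whenever $\kappa(A_1)=\kappa$. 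This contradicts the individual sharpness in Theorem~\ref{thm:Zhang-triangle-condition-number}.

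Your doubt about normal extremizers is also misplaced, and so is the heuristic that failure of $|A+B|\le|A|+|B|$ is a non-normal phenomenon. Multiplying the example above on the left by $J$ gives the Hermitian pair $A_1=P$, $A_2=tJ-P$. It has the same right moduli and condition numbers: $|A_1+A_2|=tI$, $|A_2|=tI-\frac{\kappa+1}{2}J+O(1/t)$, and $\kappa(A_2)\to1$. At $y=(1,\kappa^{-1/2})^T$ one finds $\langle y,(c'|A_1|+c(\kappa(A_2))|A_2|)y\rangle=\langle y,|A_1+A_2|y\rangle+2c'-(\kappa+1)\kappa^{-1/2}+O(1/t)$. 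This is less than $\langle y,|A_1+A_2|y\rangle$ for large $t$ whenever $c'<c(\kappa)$. Since everything here is Hermitian, $|\cdot|_{\sym}=|\cdot|$, and this is directly a sharpness example for the proposition; split $A_2$ into $m-1$ equal pieces if $m>2$. Finally, your closing remark on the $\omega$-version is not needed here. It also does not follow from individual sharpness, since the extremal families need not have all $\kappa_k=\omega$.
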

\vskip 5pt\noindent

Even without invertibility assumptions, one can still recover an operator
comparison by averaging over unitary orbits. In this direction, Bourin and
Lee \cite{BL26a} obtained the following averaged unitary-orbit inequality.

\vskip 5pt\noindent
\begin{thm}[{\cite{BL26a}}]\label{thsymaverage} Let $A_1,\ldots, A_m\in\bM_n$. Then,  for some unitaries $V_1,\ldots, V_n\in\bM_n$,
	$$
	\left|\sum_{k=1}^m A_k\right|_{\sym} \le \frac{\sqrt{2}}{n}   \sum_{i=1}^nV_i\left\{\sum_{k=1}^m \left|A_k\right|_{\sym}\right\}V_i^*.
	$$
\end{thm}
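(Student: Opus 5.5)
We need to prove: for some unitaries $V_1,\dots,V_n$,
\[
\Bigl|\sum_k A_k\Bigr|_{\sym}\le \frac{\sqrt2}{n}\sum_{i=1}^n V_i\Bigl\{\sum_k |A_k|_{\sym}\Bigr\}V_i^*.
\]
Write $Z=\sum_k A_k$ and $S=\sum_k|A_k|_{\sym}$. The right-hand side is an average over $n$ unitary conjugates, so the natural route is a unitary-orbit criterion. A positive $X$ satisfies $X\le \frac1n\sum_{i=1}^n V_iYV_i^*$ for some unitaries $V_i$ whenever $X\prec_w Y$ suitably, with enough room on the trace. The cleanest version of such a criterion averages $Y$ over the $n$ cyclic conjugations in an eigenbasis. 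These produce the scalar $\frac{\Tr Y}{n}I$ when combined with a diagonal unitary of $n$-th roots of unity, since the average of the conjugates of $Y$ by the powers of $D$ is the diagonal part $\Delta(Y)$. I would first aim to show the operator inequality $|Z|_{\sym}\le \sqrt2\,\frac1n\sum_i V_iSV_i^*$ in the form: the average of $n$ unitary conjugates of $S$ can equal $\Delta(WSW^*)$ for any unitary $W$. One then needs a diagonal (in an eigenbasis of $|Z|_{\sym}$) matrix dominating $|Z|_{\sym}/\sqrt2$.

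The plan is therefore as follows. Let $W$ diagonalize $|Z|_{\sym}$, with eigenvectors $h_1,\dots,h_n$. Then $|Z|_{\sym}=\sum_j \langle h_j,|Z|_{\sym}h_j\rangle\, |h_j\rangle\langle h_j|$. Averaging $S$ over the conjugations by powers of the diagonal unitary $D=\sum_j\omega^j|h_j\rangle\langle h_j|$, with $\omega=e^{2\pi i/n}$, gives
\[
\frac1n\sum_{i=1}^n D^iSD^{-i}=\sum_j\langle h_j,Sh_j\rangle\,|h_j\rangle\langle h_j|.
\]
So it suffices to prove the scalar inequality
\[
\langle h,|Z|_{\sym}h\rangle\le \sqrt2\,\langle h,Sh\rangle
\]
for every unit vector $h$. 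We would then take $V_i=D^i$.

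The key step is this vector inequality. By triangle inequality in $\bC$ it reduces to controlling $\langle h,|Z|h\rangle+\langle h,|Z^*|h\rangle$ by $\sqrt2\sum_k(\langle h,|A_k|h\rangle+\langle h,|A_k^*|h\rangle)$. Write $Z=U|Z|$. Then $\langle h,|Z|h\rangle=\langle Uh,Zh\rangle$ and $\langle h,|Z^*|h\rangle=\langle h,ZU^*h\rangle$. Summing, we get $\sum_k \bigl(\langle Uh,A_kh\rangle+\langle h,A_kU^*h\rangle\bigr)$, and each term satisfies, by the mixed Cauchy--Schwarz inequality $|\langle x,Ay\rangle|^2\le\langle x,|A^*|x\rangle\langle y,|A|y\rangle$,
\[
|\langle Uh,A_kh\rangle|\le \langle Uh,|A_k^*|Uh\rangle^{1/2}\langle h,|A_k|h\rangle^{1/2}.
\]
Here is the main obstacle: the vectors $Uh$ and $U^*h$ appear rather than $h$. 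So I would instead combine the two terms before estimating, aiming to bound $\langle h,(|Z|+|Z^*|)h\rangle$ via a $2\times2$ block trick. Consider $\tilde Z=\begin{bmatrix}0&Z\\ Z^*&0\end{bmatrix}$, which is Hermitian with $|\tilde Z|=|Z^*|\oplus|Z|$, and the vector $h\oplus h$. Then $\langle h,(|Z|+|Z^*|)h\rangle=\langle h\oplus h,|\tilde Z|(h\oplus h)\rangle$. For Hermitian $\tilde Z=\sum_k\tilde A_k$, one has $|\tilde Z|=\tilde Z(P-P^\perp)$ with spectral projections $P,P^\perp$, and one can use $|\langle x,\tilde A_k Jx\rangle|$ estimates with $J=P-P^\perp$. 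The constant $\sqrt2$ should emerge from $\|Jx\|$-splitting: $\langle x,|\tilde A|Px\rangle+\langle x,|\tilde A|P^\perp x\rangle$ is bounded via Cauchy--Schwarz by $\langle x,|\tilde A|x\rangle^{1/2}\bigl(\langle Px,|\tilde A|Px\rangle^{1/2}+\langle P^\perp x,|\tilde A|P^\perp x\rangle^{1/2}\bigr)$, and $a^{1/2}+b^{1/2}\le\sqrt2(a+b)^{1/2}$. The remaining difficulty, which I expect to be the crux, is bounding $\langle Px,|\tilde A_k|Px\rangle+\langle P^\perp x,|\tilde A_k|P^\perp x\rangle$ by $\langle x,|\tilde A_k|x\rangle$ for $x=h\oplus h$. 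This is a pinching-type inequality, and it requires choosing the block structure or the vector symmetry carefully. I would try exploiting the symmetry $x\mapsto$ swap, which commutes appropriately with $|\tilde A_k|$ up to the flip conjugating $\tilde A_k$ to its transpose-type counterpart.
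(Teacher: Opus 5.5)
Your reduction fails at the step ``it suffices to prove $\langle h,|Z|_{\sym}h\rangle\le\sqrt2\,\langle h,Sh\rangle$ for every unit vector $h$.'' That condition is just the operator inequality $|Z|_{\sym}\le\sqrt2\,S$, with no unitary conjugation at all. It is false, and in fact no constant $C$ gives $|Z|_{\sym}\le C\,S$.

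Let $e_1,e_2,e_3$ be the standard basis of $\bC^3$, let $t>1$, and put
\[
A_\pm=\tfrac12\,e_1(e_2\pm te_3)^*,\qquad B_\pm=\tfrac12\,(e_2\pm te_1)e_3^*.
\]
Then $Z=A_++A_-+B_++B_-=e_1e_2^*+e_2e_3^*$. Hence $|Z|=e_2e_2^*+e_3e_3^*$, $|Z^*|=e_1e_1^*+e_2e_2^*$ and $|Z|_{\sym}=\mathrm{diag}(1/2,1,1/2)$. On the other hand, $|A_\pm^*|=\frac{\sqrt{1+t^2}}2\,e_1e_1^*$ and $|B_\pm|=\frac{\sqrt{1+t^2}}2\,e_3e_3^*$. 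The matrices $|A_\pm|$ and $|B_\pm^*|$ are rank one with $(2,2)$ entry $\frac1{2\sqrt{1+t^2}}$. Their off-diagonal entries cancel in pairs, so
\[
S=\mathrm{diag}\Bigl(\frac{1+2t^2}{2\sqrt{1+t^2}},\ \frac1{\sqrt{1+t^2}},\ \frac{1+2t^2}{2\sqrt{1+t^2}}\Bigr).
\]
Thus $\langle e_2,|Z|_{\sym}e_2\rangle=1>\sqrt2/\sqrt{1+t^2}=\sqrt2\,\langle e_2,Se_2\rangle$, and letting $t\to\infty$ defeats any constant $C$.

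Moreover, the eigenvalue $1$ of $|Z|_{\sym}$ is simple. Any unitaries commuting with $|Z|_{\sym}$ therefore fix $e_2$ up to a phase. This includes the powers of your $D$, whichever eigenbasis you choose. So the $(2,2)$ entry of $\frac{\sqrt2}{n}\sum_iV_iSV_i^*$ stays equal to $\sqrt2/\sqrt{1+t^2}<1$. Pinching $S$ itself in an eigenbasis of $|Z|_{\sym}$ cannot work. Consequently the block-matrix argument of your last paragraph, which targets the vectorwise inequality, cannot be completed.

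The missing idea is the one you state and then drop: rotate $S$ \emph{before} pinching. Let $h_1,\dots,h_n$ be an eigenbasis of $|Z|_{\sym}$ with eigenvalues $\mu_1\ge\cdots\ge\mu_n$. By the Schur--Horn theorem, the diagonals $(\langle h_j,W(\sqrt2S)W^*h_j\rangle)_j$, with $W$ unitary, are exactly the vectors majorized by $\sqrt2\,\lambda^{\downarrow}(S)$. Hence it suffices to find $d\prec\sqrt2\,\lambda^{\downarrow}(S)$ with $\mu_j\le d_j$ for all $j$; then $V_i=D^iW$ works.

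Such a $d$ exists as soon as $|Z|_{\sym}\prec_w\sqrt2\,S$, by Corollary~\ref{wmajdiag}. By Ky Fan's principle (Corollary~\ref{KFprinciple}), this weak majorization is exactly the symmetric-norm inequality of Theorem~\ref{thsymnorm}. So the substance of the statement is the global estimate $\bigl\||Z|_{\sym}\bigr\|\le\sqrt2\,\|S\|$ for all symmetric norms, which is not a vector-by-vector fact. The passage to an average of $n$ unitary conjugates is then a general unitary-orbit principle: weak majorization implies domination by an average of $n$ conjugates. This is in the spirit of Corollaries~\ref{wmajorbit}--\ref{wmajdiag} and of \cite{BL26a}; the survey itself only cites the result without proof. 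In the example above, the $V_i$ must genuinely move the large eigenvalues of $S$ into the direction $e_2$.
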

\vskip 5pt\noindent

\subsection{Unitarily invariant norms}
Zhang \cite{Zha26b} established a direct sharp comparison between the sum $\sum_k A_k$ and the sum of the symmetric moduli.
\vskip 5pt\noindent
\begin{thm}[{\cite{Zha26b}}]\label{thm:sum-vs-sym}
	Let $A_1,\dots,A_m\in\mathbb M_n$. Then, for all symmetric norms,
	\[
	\Bigl\lVert \sum_{k=1}^m A_k \Bigr\rVert
	\ \le\ 2\,
	\Bigl\lVert \sum_{k=1}^m \bigl|A_k\bigr|_{\mathrm{sym}} \Bigr\rVert .
	\]
	Moreover, the constant $2$ is sharp.
\end{thm}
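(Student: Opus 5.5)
The plan is to pass to Hermitian dilations, reduce to Ky Fan $k$-norms, and conclude with Ky Fan's dominance principle (Corollary~\ref{KFprinciple}).

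\textbf{Step 1: dilation.} For each $k$ set
$$
T_k=\begin{bmatrix}0&A_k\\ A_k^*&0\end{bmatrix}\in\bM_{2n}^{\sa}.
$$
Then $|T_k|=|A_k^*|\oplus|A_k|$. Write
$$
H=\sum_k T_k,\qquad X=\sum_k|A_k^*|,\qquad Y=\sum_k|A_k|,\qquad P=\sum_k|T_k|=X\oplus Y .
$$
Since $-|T_k|\le T_k\le|T_k|$, summing gives $-P\le H\le P$. Moreover, $H$ is the dilation of $S:=\sum_k A_k$. Its eigenvalues are therefore $\pm$ the singular values of $S$, so each singular value of $S$ occurs twice among the eigenvalues of $|H|$. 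Hence, for $1\le k\le n$,
$$
\|H\|_{(2k)}=2\|S\|_{(k)}.
$$

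\textbf{Step 2: from $-P\le H\le P$ to Ky Fan norms.} I claim that $\|H\|_{(2k)}\le\|P\|_{(2k)}$. Take orthonormal eigenvectors of $H$ for the $2k$ eigenvalues of largest modulus. Let $E$ be the projection onto those with nonnegative eigenvalue and $F$ the projection onto the rest, so that $EF=0$ and $\Tr(E+F)=2k$. Then
$$
\|H\|_{(2k)}=\Tr EH-\Tr FH\le \Tr EP+\Tr FP\le\|P\|_{(2k)},
$$
where the last step uses the variational formula \eqref{variassumeig}.

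\textbf{Step 3: comparing $X\oplus Y$ with $X+Y$.} This is the step I expect to be the crux. The $2k$ largest eigenvalues of $X\oplus Y$ give
$$
\|P\|_{(2k)}=\max_{i+j=2k}\bigl(\|X\|_{(i)}+\|Y\|_{(j)}\bigr),
$$
with the convention $\|\cdot\|_{(i)}=\|\cdot\|_{(n)}$ for $i>n$. Since $X,Y\ge0$, we have $X,Y\le X+Y$, so this is at most $f(i)+f(j)$, where $f(i)=\|X+Y\|_{(i)}$.

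The function $f$ is concave in $i$: its increments are the nonincreasing eigenvalues $\lambda_i^{\downarrow}(X+Y)\ge 0$, and it stays constant after $n$. Concavity gives $f(i)+f(j)\le 2f(k)$ whenever $i+j=2k$. Combining Steps 1–3,
$$
2\|S\|_{(k)}\le 2\|X+Y\|_{(k)}=4\Bigl\|\sum_k|A_k|_{\sym}\Bigr\|_{(k)} .
$$

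\textbf{Step 4: conclusion.} Step 3 says $|S|\prec_w 2\sum_k|A_k|_{\sym}$. Corollary~\ref{KFprinciple} then yields the inequality for all symmetric norms.

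\textbf{Sharpness.} Take $m=1$, $n=2$ and $A=\begin{pmatrix}0&1\\0&0\end{pmatrix}$. Then $|A|_{\sym}=\tfrac12 I$, so
$$
\|A\|_\infty=1=2\,\bigl\||A|_{\sym}\bigr\|_\infty,
$$
and the constant $2$ cannot be improved.
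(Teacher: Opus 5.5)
Your proof is correct. The survey does not prove this theorem (it is quoted from \cite{Zha26b}), so there is no in-paper argument to compare with. Your route is self-contained and uses only what Section~\ref{s:majorization} provides.

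The real input is $-P\le H\le P$, i.e.\ positivity of $\begin{bmatrix} X & S\\ S^* & Y\end{bmatrix}=\sum_k\begin{bmatrix} |A_k^*| & A_k\\ A_k^* & |A_k|\end{bmatrix}$. You combine it with the variational formula \eqref{variassumeig} and Ky Fan's principle (Corollary~\ref{KFprinciple}). This buys two things:
\begin{itemize}
\item the conclusion comes out in spectral form, $|\sum_k A_k|\prec_w\sum_k(|A_k|+|A_k^*|)$;
\item since nothing else about the $A_k$ is used, the same argument gives $\|S\|\le\|X+Y\|$ for every positive block matrix with diagonal blocks $X,Y$ and off-diagonal block $S$.
\end{itemize}
A shorter standard alternative starts from the same block matrix and uses the Cauchy--Schwarz inequality for symmetric norms: $\|S\|\le\|X\|^{1/2}\|Y\|^{1/2}\le\|X+Y\|$. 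It imports a result the survey does not prove, whereas yours does not.

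Some minor points:
\begin{itemize}
\item Step~3, which you flag as the crux, needs no concavity argument. Since $X\le X+Y$ and $Y\le X+Y$, one has $X\oplus Y\le (X+Y)\oplus(X+Y)$. Eigenvalue monotonicity then gives $\|P\|_{(2k)}\le\|(X+Y)\oplus(X+Y)\|_{(2k)}=2\|X+Y\|_{(k)}$.
\item You use $k$ both as the summation index of the $A_k$ and as the Ky Fan index, e.g.\ in $4\|\sum_k|A_k|_{\sym}\|_{(k)}$. Rename one of them, and state the convention $\|\cdot\|_{(0)}=0$.
\item Your sharpness example is right. Padding it with zeros ($A\oplus 0$ and $A_2=\cdots=A_m=0$) gives sharpness for every $n\ge 2$ and every $m$.
\end{itemize}
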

\vskip 5pt\noindent

Bourin and Lee \cite{BL26e} proved the following uniform triangle estimate
for the arithmetic symmetric modulus. The constant is known to be sharp in
all dimensions \(n\ge3\), while the sharpness in dimension \(n=2\) remains
open; see \cite{Zha26b}.

\vskip 5pt\noindent
\begin{thm}[{\cite{BL26e}}]\label{thsymnorm} Let $A_1,\dots, A_m\in\bM_n$. Then, for all symmetric norms,
$$
\left\|\left|\sum_{k=1}^m A_k\right|_{\sym}\right\| \le \sqrt{2} \left\|\sum_{k=1}^m \left|A_k\right|_{\sym}\right\|,
$$
where the constant is sharp for all $n\ge 3$.
\end{thm}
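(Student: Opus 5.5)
The plan is to reduce the statement to Ky Fan $k$-norms by Ky Fan's dominance principle (Corollary~\ref{KFprinciple}). I would then move to the Hermitian dilation in $\bM_{2n}$, where the left and right moduli appear together on a block diagonal. Put $S=\sum_k A_k$ and
$$
T_k=\begin{bmatrix}0&A_k\\ A_k^*&0\end{bmatrix},\qquad T=\sum_k T_k=\begin{bmatrix}0&S\\ S^*&0\end{bmatrix}.
$$
Then $|T_k|=|A_k^*|\oplus|A_k|$ and $|T|=|S^*|\oplus|S|$. Set $X=\sum_k|A_k^*|$, $Y=\sum_k|A_k|$ and $H=X\oplus Y=\sum_k|T_k|$. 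Since each $T_k$ is Hermitian, $-|T_k|\le T_k\le |T_k|$, and summing gives $-H\le T\le H$.

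The first key step converts this two-sided bound into an operator bound on $|T|$. Let $P,Q$ be the spectral projections of $T$ for its positive and negative parts. Since $T$ commutes with $P$ and $Q$,
$$
|T|=PTP-QTQ\le PHP+QHQ=:G .
$$
The second step compresses back to $\bM_n$ with $W=\begin{bmatrix}I\\ I\end{bmatrix}$, which gives
$$
|S|+|S^*|=W^*|T|W\le W^*GW=W^*PHPW+W^*QHQW .
$$
Hence $2\,\| |S|_{\sym}\|_{(k)}\le \|W^*PHPW\|_{(k)}+\|W^*QHQW\|_{(k)}$. The target is $\sqrt2\,\|X+Y\|_{(k)}$, since $2\sum_k|A_k|_{\sym}=X+Y$.

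The main obstacle is the constant. The crude estimate $WW^*\le 2I$ gives $W^*GW\simeq G^{1/2}WW^*G^{1/2}\le 2G$, and with the pinching inequality (Proposition~\ref{pinchtrace}) this yields only the factor $2$, and even then only against the $2n\times 2n$ matrix $X\oplus Y$. What must be exploited is that $P\perp Q$ and that $T$ is off-diagonal. The latter gives $JTJ=-T$ with $J=I\oplus(-I)$, so $JPJ=Q$. I would use this symmetry to write $W^*QHQW=W'^*PHPW'$ with $W'=J\,W=\begin{bmatrix}I\\ -I\end{bmatrix}$. The two terms can then be handled together through the block operator $[\,W\ \ W'\,]$, which equals $\sqrt2$ times a unitary. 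Applying the Ky Fan variational formula (Definition~\ref{defKFN}) to each summand, I expect a Cauchy--Schwarz splitting
$$
\operatorname{Tr} E\,W^*PHPW\,E\le \operatorname{Tr}\bigl(H^{1/2}PWEW^*PH^{1/2}\bigr)
$$
in which the two cross terms $X$ and $Y$ enter with weight $1/\sqrt2$ each. This replaces the lossy $2$ by $\sqrt2$ and makes $X+Y$, rather than $X\oplus Y$, appear. This is the step I would work hardest on. If it fails, a fallback is to apply Thompson's inequality (Theorem~\ref{thT}) to $S$ and to $S^*$ separately, combine by Ky Fan's principle, and keep track of how the two unitary conjugates of $X$ and $Y$ interlace.

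For sharpness when $n\ge3$, I would adapt a rank-one $3\times3$ construction of the type used for the examples in Sections~\ref{s:quadratic} and~\ref{s:symmetric}. Two nilpotent shift-like matrices with a parameter $t$, chosen so that their left and right supports are nearly orthogonal while the sum is not, should make the operator-norm ratio tend to $\sqrt2$. It then suffices to embed this example in $\bM_n$ as a direct sum with zero.
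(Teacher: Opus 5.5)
Your set-up (Ky Fan reduction, Hermitian dilation, $-H\le T\le H$) is sound. The decisive step, however, is not just missing: it cannot be supplied, because the bound you reduce to is already weaker than the theorem. Your displayed ``Cauchy--Schwarz splitting'' is an identity ($E^2=E$ and cyclicity of the trace), so everything rests on $|S|+|S^*|\le W^*GW$.

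From a polar decomposition $S=U_0|S|$, with $R=U_0U_0^*$ and $R'=U_0^*U_0$, one gets $P=\tfrac12\begin{bmatrix}R&U_0\\ U_0^*&R'\end{bmatrix}$, $Q=JPJ$, and
\[
W^*GW=\tfrac12\bigl(RXR+R'YR'+U_0^*XU_0+U_0YU_0^*\bigr).
\]
For invertible $S$ (so $U_0=U$ is unitary), this is exactly what you get by adding the elementary bounds $2|S|\le U^*XU+Y$ and $2|S^*|\le X+UYU^*$ and dividing by $2$. Those bounds come from $\begin{bmatrix}|A_k^*|&A_k\\ A_k^*&|A_k|\end{bmatrix}\ge0$.

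Now test this on the extremal pair $A,B\in\bM_3$ of Proposition~\ref{Zex}. Put $g=(e_1+e_3)/\sqrt2$ and $h=(e_1-e_3)/\sqrt2$. Then $X=4gg^*+2e_2e_2^*$ and $Y=2e_2e_2^*+4hh^*$, so $X+Y=4I_3$. Also $S=2\sqrt2\,(e_2h^*-ge_2^*)$ and $U_0=e_2h^*-ge_2^*$, hence
\[
W^*GW=3gg^*+6e_2e_2^*+3hh^*,\qquad \|W^*GW\|_\infty=6>4\sqrt2=\sqrt2\,\|X+Y\|_\infty .
\]
So no treatment of $W^*PHPW+W'^*PHPW'$ can yield $\sqrt2$, whether joint (via $[\,W\ \ W'\,]$) or term by term; in this instance the route gives at best the constant $3/2$. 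The term-by-term version you actually write is worse still. In this example each term has operator norm $9/2$. Already for the single matrix $A_1=e_1e_2^*\in\bM_2$ one gets $\|W^*PHPW\|_\infty+\|W^*QHQW\|_\infty=2>\sqrt2=\sqrt2\,\|X+Y\|_\infty$.

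The information is lost when $|T|$ is replaced by $PHP+QHQ$. After that step, $X$ and $Y$ interact only through conjugation by the polar factor of $S$. This is enough when that factor is a multiple of a Hermitian unitary $V$. Then $U^*XU+UYU^*=V(X+Y)V$, and the elementary bounds give $|S|_{\sym}\le\frac12(M+VMV)$ with $M=\sum_k|A_k|_{\sym}$. That is the constant-one polar-Hermitian Theorem~\ref{thsympolnorm}, not the general $\sqrt2$ statement.

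The paper does not reprove the general inequality; it quotes it from \cite{BL26e}, whose unitary-orbit argument is strong enough to give also Proposition~\ref{coreig}. Any proof must avoid the lossy step above.

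Your fallback does not help either. Thompson's inequality dominates $|S|$ by separately conjugated individual $|A_k|$'s, which records nothing about how $\sum_k|A_k|$ and $\sum_k|A_k^*|$ overlap. Treating $S$ and $S^*$ separately is hopeless anyway, since even the optimal comparison of $\|S\|$ with $\|Y\|$ alone has constant $\sqrt{\min\{m,n\}}$ (Theorem~\ref{thm:Lee-optimal}).

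Finally, for sharpness no limiting family is needed. The explicit rank-one $\pm1$ matrices of Proposition~\ref{Zex} give $|A|_{\sym}+|B|_{\sym}=2I_3$ and $\||A+B|_{\sym}\|_\infty=2\sqrt2$ exactly. A direct sum with a zero block then covers all $n\ge3$. Your shift-type family is not specified concretely enough to be checked.
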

\vskip 5pt\noindent

\vskip 5pt\noindent
\begin{prop}\label{Zex}
	There exist rank-one matrices \(A,B\in\bM_3\) with \(\pm1\)-entries such that
	\[
	\bigl\| |A+B|_{\sym}\bigr\|_{\infty}
	=
	\sqrt{2}\,
	\bigl\| |A|_{\sym}+|B|_{\sym}\bigr\|_{\infty}.
	\]
\end{prop}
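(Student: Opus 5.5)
The plan is to exploit the very rigid structure of rank-one matrices with $\pm1$ entries, which makes both sides of the identity explicitly computable. Write $A=x_1y_1^*$ and $B=x_2y_2^*$ with $x_i,y_i\in\{\pm1\}^3$, so that $\|x_i\|=\|y_i\|=\sqrt3$. For a rank-one matrix $xy^*$ one has
$$|xy^*|=\frac{\|x\|}{\|y\|}\,yy^*,\qquad |(xy^*)^*|=\frac{\|y\|}{\|x\|}\,xx^*.$$
With equal norms this gives $|A|_{\sym}=\tfrac12(x_1x_1^*+y_1y_1^*)$, and similarly for $B$. The right-hand side is therefore $\sqrt2$ times one half of the largest eigenvalue of the integer Gram-type matrix $G=x_1x_1^*+y_1y_1^*+x_2x_2^*+y_2y_2^*$. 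Equivalently, it is half the square of the largest singular value of the $3\times4$ matrix $[x_1\ y_1\ x_2\ y_2]$. This is an elementary $3\times 3$ (or $4\times4$) eigenvalue problem with entries depending only on the inner products $\langle x_i,y_j\rangle\in\{\pm1,\pm3\}$.

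For the left-hand side, $C=A+B$ has rank at most two. I would compute $|C|$ and $|C^*|$ through the $2\times2$ compressions. Setting $X=[x_1\ x_2]$ and $Y=[y_1\ y_2]$, we have $C=XY^*$, and the nonzero parts of $C^*C$ and $CC^*$ are governed by the $2\times2$ Gram matrices $X^*X$ and $Y^*Y$. To obtain $\bigl\||C|_{\sym}\bigr\|_\infty$, I will not diagonalize $|C|+|C^*|$ in general. Instead I will look for a configuration where a single unit vector $h$ is simultaneously a top eigenvector of $|C|$ and of $|C^*|$. Then $\bigl\||C|_{\sym}\bigr\|_\infty=\|C\|_\infty$, and the comparison reduces to showing $\|C\|_\infty^2=2\,\lambda_1^{\downarrow}(G/2)^2$.

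The heuristic guiding the choice of sign patterns comes from the extremal mechanism behind Theorem~\ref{thsymnorm}. Equality should require that the right and left moduli of $C$ line up, while the four rank-one projections appearing in $G$ are spread out in $\bC^3$, so that $\lambda_1^{\downarrow}(G)$ is small compared with $\|C\|_\infty$. Concretely, I would first try $y_1$ and $y_2$ nearly orthogonal (inner product $\pm1$), $x_1$ and $x_2$ nearly orthogonal, and the $x$'s nearly orthogonal to the $y$'s, with signs arranged so that $C$ is normal-like on a common top direction. Up to the symmetries (sign flips of each vector, simultaneous coordinate permutations and coordinate sign changes), only a small number of configurations remain. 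If the first guess fails, I will simply enumerate them.

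The main obstacle is finding the right sign pattern: the verification is routine once a candidate is fixed, but a naive choice (for instance $B=A^*$ or shift-like patterns) gives ratio $1$. So the real work is the finite search. Once the candidate is found, I will state $A$ and $B$ explicitly and display $|A|_{\sym}+|B|_{\sym}$ as $G/2$ with its largest eigenvalue. I will then display $A+B$ and verify $\bigl\||A+B|_{\sym}\bigr\|_\infty$ by exhibiting the common top eigenvector $h$, checking $|A+B|h=|(A+B)^*|h=\|A+B\|_\infty h$. Finally, I will check the ratio $\sqrt2$ by direct arithmetic, which by Theorem~\ref{thsymnorm} is then automatically the maximum.
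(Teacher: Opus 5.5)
\textbf{Genuine gap: the witness is missing.} Your reductions are correct as far as they go. For $x,y\in\{\pm1\}^3$ one has $|xy^*|=yy^*$ and $|(xy^*)^*|=xx^*$, so $\bigl\||A|_{\sym}+|B|_{\sym}\bigr\|_\infty=\tfrac12\lambda_1^{\downarrow}(G)$. A common top eigenvector of $|C|$ and $|C^*|$ does give $\bigl\||C|_{\sym}\bigr\|_\infty=\|C\|_\infty$. But the proposition is a pure existence statement, and its entire content is the witness. You never produce one, and you never argue that your search must succeed. The paper's proof is exactly the explicit pair
\[
A=\begin{pmatrix}1&-1&-1\\1&-1&-1\\1&-1&-1\end{pmatrix},\qquad
B=\begin{pmatrix}-1&-1&1\\1&1&-1\\-1&-1&1\end{pmatrix},
\]
together with the computations $|A|_{\sym}+|B|_{\sym}=2I_3$ and $\bigl\||A+B|_{\sym}\bigr\|_\infty=2\sqrt2$.

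\textbf{Your search plan could miss this pair.} Independent sign flips of the vectors are \emph{not} symmetries of the problem: $x_1\mapsto -x_1$ turns $A+B$ into $B-A$. Only flips that change $A$ and $B$ by the same sign are harmless.

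That discarded relative sign is precisely what decides the matter. Inner products of vectors in $\{\pm1\}^3$ are odd, and they equal $\pm3$ only for parallel vectors. So your ``nearly orthogonal'' conditions force $x_1,y_1,x_2,y_2$ to represent the four lines $\bR v$, $v\in\{\pm1\}^3$. Then the off-diagonal entries of $G$ cancel and $G=4I_3$. Since $\Tr G=12$, the value $\lambda_1^{\downarrow}(G)=4$ is the least possible, and the right-hand side equals $2\sqrt2$.

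Now take $x_1=(1,1,1)^T$, $y_1=(1,-1,-1)^T$, $x_2=(1,-1,1)^T$, $y_2=(-1,-1,1)^T$; these give the paper's $A,B$. The matrix $C=A+B$ satisfies $C^*C=8P$ and $CC^*=8Q$, where $P,Q$ are the orthogonal projections onto $\mathrm{span}\{e_2,e_1-e_3\}$ and $\mathrm{span}\{e_2,e_1+e_3\}$. Thus $e_2$ is your common top eigenvector, $|C|_{\sym}=\sqrt2\,(I_3+e_2e_2^*)$, and the ratio is $\sqrt2$.

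Replacing $B$ by $-B$ keeps the same four lines and the same inner products up to sign. Yet it gives $|A-B|_{\sym}=2I_3$ and ratio $1$. So your qualitative conditions do not pin down the example. The explicit matrices and this final computation are the proof, and you must supply them.
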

\vskip 5pt\noindent

For instance, take
\[
A=\begin{pmatrix}
	1&-1&-1\\
	1&-1&-1\\
	1&-1&-1
\end{pmatrix},
\qquad
B=\begin{pmatrix}
	-1&-1&1\\
	1&1&-1\\
	-1&-1&1
\end{pmatrix}.
\]
Both matrices have rank one. A direct computation gives
\[
|A|_{\sym}+|B|_{\sym}=2I_3,
\qquad
\bigl\| |A+B|_{\sym}\bigr\|_{\infty}=2\sqrt2.
\]
Thus the constant \(\sqrt2\) in Theorem~\ref{thsymnorm} is sharp in
dimension \(3\). Embedding this example as a direct summand shows sharpness
in all dimensions \(n\ge3\).

Since the symmetric modulus is invariant by a congruence with a permutation matrix and by multiplication by $-1$, there are several examples of $\pm1$-matrices satisfying  Proposition \ref{Zex}.

Taking traces in Theorem~\ref{thsymnorm} gives only the displayed constant $\sqrt2$. The usual trace-norm triangle inequality is recovered separately from
$$
{\mathrm{Tr}}\,|A|_{\sym}={\mathrm{Tr}}\,|A|.
$$
The same unitary-orbit argument also yields the following eigenvalue refinement.

\vskip 5pt\noindent
\begin{prop}[{\cite{BL26e}}]\label{coreig}  Let $A_1,\dots, A_m\in\bM_n$. If $j\ge0$ and $1+3j\le n$, then
$$
\lambda^{\downarrow}_{1+3j}\left(\left|\sum_{k=1}^m A_k\right|_{\sym}\right) \le \sqrt{2} \lambda^{\downarrow}_{1+j}\left(\sum_{k=1}^m \left|A_k\right|_{\sym}\right).
$$
\end{prop}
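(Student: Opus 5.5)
Set $S=\sum_k A_k$ and $T=\sum_k |A_k|_{\sym}$. The plan is to prove an operator inequality relating $|S|_{\sym}$ to three unitary conjugates of $T$ with a $\sqrt2$ factor, and then read off eigenvalues with Weyl's inequalities. This is the ``unitary-orbit argument'' behind Theorem~\ref{thsymnorm}, and the index $1+3j$ should come from there being three conjugates.

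First we reduce to right moduli via block matrices. Write $|S|_{\sym}=\frac12(|S|+|S^*|)$ and treat $|S|$ and $|S^*|$ separately. Polar decomposition gives $|S|=W\sum_k A_k$ with $W$ unitary, so $|S|=\sum_k \Ree(WA_k)$. Likewise $|S^*|=\sum_k\Ree(W'A_k^*)$. The aim is a bound of the form
\[
\Ree(X)\le \tfrac12\bigl(K|X|K^*+L|X^*|L^*\bigr)
\]
for contractions $K,L$, with a $\sqrt2$ loss built in. A natural tool is the $2\times2$ positive block
\[
\begin{bmatrix}|X^*|&X\\ X^*&|X|\end{bmatrix}\ge0 .
\]
Compressing it by the isometry $\frac1{\sqrt2}\begin{bmatrix}I\\ W^*\end{bmatrix}$ gives
\[
\Ree(WX)\le \tfrac12\bigl(|X^*|+W^*|X|W\bigr)\ \text{-type terms},
\]
which one can arrange into $\Ree(WX)\le \frac12(\text{conj. of }|X^*|)+\frac12(\text{conj. of }|X|)$. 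Summing over $k$ then yields an expression of the form $\sum_k (P|A_k|P^*+Q|A_k^*|Q^*)$ with the same contractions $P,Q$ for all $k$, because $W$ does not depend on $k$.

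The second step, the one I expect to be the main obstacle, is to recombine $\sum_k|A_k|$ and $\sum_k|A_k^*|$ into $T$. Each of them is bounded by $2T$, but that alone would give a constant $2$, not $\sqrt2$. To get $\sqrt2$, I would embed everything in $\bM_{2n}$ as in the proof of Theorem~\ref{thZ}. Put $|A_k|\oplus|A_k^*|$ on the diagonal, and write the sum $\frac12(P\,\cdot\,P^*+Q\,\cdot\,Q^*)$ as a single compression $R^*(|A_k|\oplus|A_k^*|)R$, where $R=\frac1{\sqrt2}\begin{bmatrix}P^*\\ Q^*\end{bmatrix}$ has norm at most $1$. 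Next, $|A_k|\oplus|A_k^*|$ is unitarily congruent, via the Hadamard-type unitary $\frac1{\sqrt2}\begin{bmatrix}I&I\\ I&-I\end{bmatrix}$, to a block matrix with diagonal blocks $|A_k|_{\sym}$. This block matrix is bounded by twice its diagonal part, by the pinching Proposition~\ref{pinchtrace} in its operator form
\[
\begin{bmatrix}X&Y\\ Y^*&Z\end{bmatrix}\le U(X\oplus 0)U^*+V(0\oplus Z)V^*.
\]
This produces, for $|S|$ and for $|S^*|$, bounds by sums of unitary conjugates of $T$ with total weight~$2$. A careful count should give
\[
|S|_{\sym}\le \tfrac{1}{\sqrt2}\bigl(U_1TU_1^*+U_2TU_2^*+U_3TU_3^*\bigr)\cdot c,
\]
with the constant arranged so that three Weyl steps give $\sqrt2$. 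Honestly, I expect getting exactly three conjugates and the exact $\sqrt2$ to need the specific construction from \cite{BL26e}. My fallback is the form
\[
|S|_{\sym}\le \tfrac{\sqrt2}{2}\bigl(U T U^*+VTV^*\bigr)+\text{a term of rank}\le\cdot,
\]
with an error term handled by Weyl.

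Finally, once an inequality of the form $|S|_{\sym}\le \sqrt2\bigl(\alpha_1U_1TU_1^*+\alpha_2U_2TU_2^*+\alpha_3U_3TU_3^*\bigr)$ with $\sum_i\alpha_i=1$ is in hand, Weyl's inequality
\[
\lambda^{\downarrow}_{1+3j}(X_1+X_2+X_3)\le \sum_i\lambda^{\downarrow}_{1+j}(X_i)
\]
gives $\lambda^{\downarrow}_{1+3j}(|S|_{\sym})\le\sqrt2\sum_i\alpha_i\lambda^{\downarrow}_{1+j}(T)=\sqrt2\,\lambda^{\downarrow}_{1+j}(T)$, which is the proposition. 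The Weyl step is actually $\lambda_{1+3j}\le\lambda_{1+j}+\lambda_{1+j}+\lambda_{1+j}$, since $(1+j)+(1+j)+(1+j)-2=1+3j$, and it requires $1+3j\le n$.
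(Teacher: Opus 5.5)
Your overall architecture matches what the paper indicates: an operator inequality bounding $|S|_{\sym}$ by three unitary conjugates of $T$ with total weight $\sqrt2$, followed by Weyl's inequality $\lambda_{1+3j}(X_1+X_2+X_3)\le\sum_i\lambda_{1+j}(X_i)$. Your final paragraph is correct.

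\textbf{The gap.} The operator inequality itself is never established, and the tools you propose cannot produce it.

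Write $S=U|S|$ with $U$ unitary, and set $P=\sum_k|A_k^*|$ and $Q=\sum_k|A_k|$, so that $P+Q=2T$.

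\begin{itemize}
\item Your equal-weight compression of $\begin{bmatrix}|X^*|&X\\ X^*&|X|\end{bmatrix}$ gives at best $|S|_{\sym}\le \frac12T+\frac14\bigl(U^*PU+UQU^*\bigr)$. The only available control on the conjugated part is $P,Q\le 2T$, so the total weight is $3/2$, not $\sqrt2$.
\item Your $\bM_{2n}$ Hadamard and block-decomposition route bounds $|S|$ and $|S^*|$ separately by conjugates of $T$. No scheme of that kind can beat the constant $2$. For $m=1$ and $A_1=e_1e_2^*$ one has $\bigl\||A_1|\bigr\|_\infty=1=2\|T\|_\infty$.
\item The ``careful count'' that is supposed to give $\sqrt2$ is asserted, not derived.
\item The display $\Ree(WX)\le\frac12(|X^*|+W^*|X|W)$ is false as written. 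Take $X=e_1e_2^*$ and $W$ the swap of $e_1,e_2$. The correct bound is $\Ree(WX)\le\frac12(|X|+W|X^*|W^*)$.
\item Proposition~\ref{pinchtrace} is a norm inequality. The operator decomposition you quote is a different lemma.
\end{itemize}

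\textbf{The missing idea.} Weight the two halves of the positive block unequally. The unconjugated terms recombine into $T$ with no loss, while the conjugated ones cost a factor $2$.

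Since $|S|=U^*S$ and $|S^*|=SU^*$, we have $2|S|_{\sym}=\sum_k\bigl(\Ree(U^*A_k)+\Ree(A_kU^*)\bigr)$. Compress the positive block of $A_k$ by $\begin{bmatrix}-t^{-1/2}U\\ t^{1/2}I\end{bmatrix}$ and by $\begin{bmatrix}t^{1/2}I\\ -t^{-1/2}U^*\end{bmatrix}$. For every $t>0$ this gives
\[
2\Ree(U^*A_k)\le t|A_k|+t^{-1}U^*|A_k^*|U,\qquad 2\Ree(A_kU^*)\le t|A_k^*|+t^{-1}U|A_k|U^*.
\]
Summing over $k$ gives $4|S|_{\sym}\le 2tT+t^{-1}\bigl(U^*PU+UQU^*\bigr)$. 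Using $P,Q\le 2T$,
\[
|S|_{\sym}\le \frac t2\,T+\frac1{2t}\bigl(U^*TU+UTU^*\bigr).
\]
The total weight $\frac t2+\frac1t$ is minimized at $t=\sqrt2$, which yields
\[
|S|_{\sym}\le \frac{\sqrt2}{2}\,T+\frac{\sqrt2}{4}\bigl(U^*TU+UTU^*\bigr).
\]
This is exactly the three-conjugate inequality with weights summing to $\sqrt2$ that your Weyl step needs.
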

\vskip 5pt\noindent

To describe a sharper form of the preceding estimate, we introduce a structural condition on the polar factor.

\vskip 5pt\noindent
\begin{defin} We say that a matrix $S$ is  {\it polar Hermitian} if it has a polar decomposition $S=V|S|$ in which $V$ is a scalar multiple of a Hermitian unitary. 
\end{defin}
\vskip 5pt\noindent

The class includes all involutions, as the next example records.

\vskip 5pt\noindent
\begin{eg} If $S\in\bM_n$ is an involution (a symmetry, $S=S^{-1}$), then $S$ is a polar Hermitian matrix. In fact, the unitary part  of $S$ is Hermitian.
Here is a simple proof from \cite{BL26b}. Since $S=U|S|=|S^*|U$ and $S=S^{-1}$ we infer $U|S|=U^*|S^*|^{-1}$ and so $U=U^*$ as the polar decomposition of an invertible matrix is unique. The main result of \cite{BL26b} says much more on the structure of involutions. 
\end{eg}
\vskip 5pt\noindent

This observation allows the constant $\sqrt{2}$ in Theorem~\ref{thsymnorm} to be improved to $1$ for polar-Hermitian sums.

\vskip 5pt\noindent
\begin{thm}[{\cite{BL26e}}]\label{thsympolnorm}  Let $A_1,\dots, A_m\in\bM_n$  be such that their sum is polar Hermitian.  Then, for all symmetric norms,
$$
\left\| \left|\sum_{k=1}^m A_k\right|_{\sym}\right\| \le  \left\|\sum_{k=1}^m \left|A_k\right|_{\sym}\right\|.
$$
\end{thm}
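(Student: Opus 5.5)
Write $S=\sum_k A_k$ with polar decomposition $S=\omega W|S|$, where $|\omega|=1$ and $W$ is a Hermitian unitary. Replacing each $A_k$ by $\bar\omega A_k$ changes neither side, so we may assume $\omega=1$. Then $S=W|S|$ and $S^*=|S|W$, and hence
\[
|S^*|=(SS^*)^{1/2}=(W|S|^2W)^{1/2}=W|S|W .
\]
The plan is to write $|S|_{\sym}$ as the real part of a single matrix built linearly from $S$, apply the Fan--Hoffman lemma (Lemma~\ref{lemFH}) to each summand, and conclude with Ky Fan's dominance principle (Corollary~\ref{KFprinciple}).

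The key identity is
\[
|S|_{\sym}=\frac{|S|+W|S|W}{2}=\frac{WS+SW}{2}={\mathrm{Re\,}}(WS).
\]
Here $WS=|S|$ is already Hermitian, so this holds since $SW=W|S|W=|S^*|$. By linearity,
\[
|S|_{\sym}=\sum_{k=1}^m \frac{WA_k+A_kW}{2}=\sum_{k=1}^m \frac{{\mathrm{Re\,}}(WA_k)+{\mathrm{Re\,}}(A_kW)}{2},
\]
because both $WA_k+(WA_k)^*$ and $A_kW+(A_kW)^*$ contribute, and $\sum_k {\mathrm{Re\,}}(WA_k)=\sum_k {\mathrm{Re\,}}(A_kW)=|S|$ is Hermitian. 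The Fan--Hoffman lemma gives unitaries with ${\mathrm{Re\,}}(WA_k)\le U_k|WA_k|U_k^*=U_k|A_k|U_k^*$. Similarly ${\mathrm{Re\,}}(A_kW)={\mathrm{Re\,}}(WA_k^*)$, where we use that $W$ is Hermitian so $(A_kW)^*=WA_k^*$ and the real part is unchanged under adjoint. Hence ${\mathrm{Re\,}}(A_kW)\le V_k|A_k^*|V_k^*$. This yields
\[
|S|_{\sym}\le \sum_{k=1}^m \frac{U_k|A_k|U_k^*+V_k|A_k^*|V_k^*}{2}.
\]

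This unitary-orbit bound is not yet the norm inequality, because the unitaries differ from term to term. A cruder route would bound the right-hand side by $\sum_k\|\,|A_k|_{\sym}\|$, but that is weaker than what is claimed. The main obstacle is therefore to avoid losing the common structure. The fix I would try first is to skip Fan--Hoffman termwise and use the trace characterization of Ky Fan $k$-norms instead. Since $|S|_{\sym}\ge0$, choose a rank $k$ projection $E$ with $\||S|_{\sym}\|_{(k)}=\Tr E|S|_{\sym}$. Then
\[
\Tr E|S|_{\sym}=\sum_k \Ree\,\Tr\,\frac{EWA_k+EA_kW}{2}.
\]
For each term I would apply Lemma~\ref{lemCS1}-type Cauchy--Schwarz with polar factors, $A_k=|A_k^*|^{1/2}U|A_k|^{1/2}$, to get
\[
|\Tr(EWA_k)|\le \bigl(\Tr E'|A_k^*|\bigr)^{1/2}\bigl(\Tr E|A_k|\bigr)^{1/2},
\]
where $E'=WEW$ is also a rank $k$ projection. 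The companion term gives the mirror bound with $E$ and $E'$ swapped.

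By AM--GM, adding the two bounds gives at most
\[
\tfrac12\Tr\bigl[(E+E')(|A_k|+|A_k^*|)\bigr]/2\cdot 2,
\]
so that
\[
\Tr E|S|_{\sym}\le \tfrac12\Tr\Bigl[(E+E')\sum_k|A_k|_{\sym}\Bigr]\le \Bigl\|\sum_k|A_k|_{\sym}\Bigr\|_{(k)}.
\]
The last step uses that $\tfrac12\Tr(E+E')X\le\|X\|_{(k)}$ for $X\ge0$. This closes the argument for all Ky Fan norms, and Ky Fan's principle (Corollary~\ref{KFprinciple}) extends it to all symmetric norms. The delicate point to verify is the pairing of $E$ and $E'=WEW$ in the Cauchy--Schwarz step: the two factors must land on $|A_k|$ and $|A_k^*|$ with matching projections so that the AM--GM combination produces exactly $(E+E')$ against the sum.
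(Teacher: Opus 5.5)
Your proof is correct, and the pairing you flagged does work. Writing $A_k=|A_k^*|^{1/2}U_k|A_k|^{1/2}$, one has
\[
\Tr(EWA_k)=\Tr\bigl[(EW|A_k^*|^{1/2})(U_k|A_k|^{1/2}E)\bigr],\qquad
\Tr(EA_kW)=\Tr\bigl[(E|A_k^*|^{1/2})(U_k|A_k|^{1/2}WE)\bigr].
\]
Cauchy--Schwarz then gives exactly your two bounds, with $E'=WEW$ a rank $k$ projection since $W=W^*=W^{-1}$, and the rest of the argument goes through.

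There are two small slips. First, $\sum_k\Ree(A_kW)=\Ree(SW)=|S^*|$, not $|S|$. This is harmless, since you only use $|S|_{\sym}=\frac12(WS+SW)$. Second, $k$ should not serve both as the summation index and as the Ky Fan rank.

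The survey does not reproduce the proof of \cite{BL26e}; it only indicates that a unitary-orbit argument lies behind it. Your route is essentially the trace-dual of such an estimate. For any $A\in\bM_n$, positivity of $\begin{pmatrix}|A^*|&A\\A^*&|A|\end{pmatrix}$, tested on the vectors $(-Wx,x)$ and $(x,-Wx)$, gives
\[
2\Ree(WA)\le |A|+W|A^*|W,\qquad 2\Ree(AW)\le |A^*|+W|A|W .
\]
Paired with $E$, these give your AM--GM bounds (for the real parts). Summing them over the $A_k$ yields the operator inequality
\[
|S|_{\sym}\le \frac{T+WTW}{2},\qquad T:=\sum_{k=1}^m|A_k|_{\sym}.
\]
This keeps one unitary $W$ common to all summands, which is exactly what your abandoned Fan--Hoffman attempt lost, since Fan--Hoffman produces unrelated unitaries for each term.

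The operator form buys more. By Weyl's inequalities it gives $\lambda^{\downarrow}_{1+2j}(|S|_{\sym})\le\lambda^{\downarrow}_{1+j}(T)$ for $1+2j\le n$, in the spirit of Proposition~\ref{coreig}. Your argument yields only the weak majorization $|S|_{\sym}\prec_w T$. That is, however, all the theorem requires. Your proof is also completely elementary, using only the Cauchy--Schwarz device of Lemma~\ref{lemCS1}, Definition~\ref{defKFN} and Corollary~\ref{KFprinciple}.
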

\vskip 5pt\noindent

\subsection{Schatten $p$-norms}
We finally specialize the preceding norm inequalities to the Schatten scale.
For the arithmetic symmetric modulus, Zhang obtained a sharp comparison
between the ordinary sum and the sum of the individual symmetric moduli. The
optimal constant depends only on \(p\), and is independent of both the number
of summands and the matrix size.
\vskip 5pt\noindent
\begin{thm}[{\cite{Zha26b}}]\label{thm:schatten-sum-vs-sym-p}
	Let $1\le p\le \infty$ and let $A_1,\dots,A_m\in\mathbb M_n$. Then
	\[
	\Bigl\lVert \sum_{k=1}^m A_k \Bigr\rVert_p
	\ \le 2^{1-\frac{1}{p}}\,
	\Bigl\lVert \sum_{k=1}^m \bigl|A_k\bigr|_{\mathrm{sym}} \Bigr\rVert_p.
	\]
	Moreover, the constant $2^{1-\frac{1}{p}}$ is sharp.
\end{thm}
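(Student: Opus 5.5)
Write $S=\sum_k A_k$ and $P=\sum_k |A_k|_{\sym}$. The plan is to interpolate between two endpoint inequalities for the pair $(S,P)$: the trace-norm case $p=1$ with constant $1$, and the operator-norm case $p=\infty$ with constant $2$. A log-convexity/Hölder argument should then produce the intermediate constant $2^{1-1/p}$.

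\textbf{Endpoint $p=1$.} Let $S=W|S|$ be a polar decomposition with $W$ unitary. Then
\[
\|S\|_1=\tr W^*S=\sum_k \tr W^*A_k .
\]
By Lemma~\ref{lemCS1} (inequality \eqref{eq:trace-amgm}, applied with $A=W$ and $B=A_k$, so that $|A|=|A^*|=I$), each term satisfies
\[
|\tr W^*A_k|\le \tfrac12\bigl(\tr|A_k|+\tr|A_k^*|\bigr)=\tr |A_k|_{\sym}.
\]
Summing over $k$ gives $\|S\|_1\le \|P\|_1$.

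\textbf{Endpoint $p=\infty$.} Take unit vectors $x,y$ with $\|S\|_\infty=|\langle y,Sx\rangle|$. Writing $A_k=U_k|A_k|=|A_k^*|U_k$ and using $A_k=|A_k^*|^{1/2}U_k|A_k|^{1/2}$ together with Cauchy--Schwarz,
\[
|\langle y,A_kx\rangle|\le \langle y,|A_k^*|y\rangle^{1/2}\langle x,|A_k|x\rangle^{1/2}
\le \tfrac12\bigl(\langle y,|A_k^*|y\rangle+\langle x,|A_k|x\rangle\bigr).
\]
Summing over $k$, the total is at most $\tfrac12\bigl(\langle y,\textstyle\sum_k|A_k^*|\,y\rangle+\langle x,\sum_k|A_k|\,x\rangle\bigr)$. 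Since $\sum_k|A_k|\le 2P$ and $\sum_k|A_k^*|\le 2P$, this is at most $2\|P\|_\infty$, so $\|S\|_\infty\le 2\|P\|_\infty$.

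\textbf{General $p$.} This is the step I expect to be the main obstacle, because the map $(A_k)\mapsto S$ is linear while $P$ is not, so Riesz--Thorin does not apply directly. I would instead prove the bound by duality. Write $\|S\|_p=\tr W^*S\,|S|^{p-1}/\|S\|_p^{p-1}$ and set $Y=W|S|^{p-1}$, so that $\|Y\|_q=\|S\|_p^{p-1}$ with $q=p/(p-1)$. Each term is then controlled by the weighted version of the same Cauchy--Schwarz step:
\[
|\tr Y^*A_k|\le (\tr |Y|\,|A_k|)^{1/2}(\tr|Y^*|\,|A_k^*|)^{1/2},
\]
which is Lemma~\ref{lemCS1} again. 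Since $|Y|=|S|^{p-1}$, summing over $k$ and applying Cauchy--Schwarz to the sum gives
\[
\|S\|_p^p\le \bigl(\tr |S|^{p-1}\textstyle\sum_k|A_k|\bigr)^{1/2}\bigl(\tr|S^*|^{p-1}\textstyle\sum_k|A_k^*|\bigr)^{1/2}.
\]
Set $X_1=\sum_k|A_k|$ and $X_2=\sum_k|A_k^*|$, so that $X_1+X_2=2P$. By Hölder, $\tr|S|^{p-1}X_i\le \|S\|_p^{p-1}\|X_i\|_p$, and hence
\[
\|S\|_p\le (\|X_1\|_p\|X_2\|_p)^{1/2}.
\]
The remaining issue is to bound $(\|X_1\|_p\|X_2\|_p)^{1/2}$ by $2^{1-1/p}\|P\|_p$. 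This is a statement about positive $X_1,X_2$ with $X_1+X_2=2P$. AM--GM gives $\tfrac12(\|X_1\|_p+\|X_2\|_p)$, but this is too weak: it yields only the constant $2$. I would therefore use the finer estimate $\|X_1\|_p^p+\|X_2\|_p^p\le\|X_1+X_2\|_p^p$, valid for positive matrices when $p\ge1$ (superadditivity of $\tr f$ for convex $f$ with $f(0)=0$, a companion of \eqref{Rot}). Combined with $\|X_1\|_p\|X_2\|_p\le \tfrac12(\|X_1\|_p^p+\|X_2\|_p^p)^{2/p}$, this gives
\[
\|X_1\|_p\|X_2\|_p\le \tfrac12\, 2^{2}\|P\|_p^{2}\cdot 2^{-2/p}\cdot 2^{\,?}.
\]
I have not checked that the powers of $2$ close up to exactly $2^{2-2/p}$; if they do not, I would instead optimize over the split $(X_1,X_2)$ directly as a scalar problem on singular values. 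Sharpness of $2^{1-1/p}$ would come from $m$ rank-one matrices of the type used in Proposition~\ref{Zex}, where $|S|$ concentrates on a single eigenvalue while $P$ is spread out.
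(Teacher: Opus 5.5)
The paper only quotes this theorem from \cite{Zha26b}, so there is no in-paper proof to match. The closest argument in the paper is the $p=2$ proof of Theorem~\ref{thm:schatten-sum-vs-qsym-p}, which combines Lemma~\ref{lemCS1} with Lieb's concavity. Your duality argument is correct: Lemma~\ref{lemCS1} tested against $Y=W|S|^{p-1}$, then Cauchy--Schwarz over $k$, then H\"older. It handles every $1\le p<\infty$ at once; with $Y=W$ it reduces to your $p=1$ argument. So the interpolation framing and the separate endpoints are unnecessary, and $p=\infty$ follows from your endpoint computation or by letting $p\to\infty$.

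The step you left open does close, but the inequality you displayed for it is wrong. The bound $\|X_1\|_p\|X_2\|_p\le\frac12(\|X_1\|_p^p+\|X_2\|_p^p)^{2/p}$ fails for $p>2$ (take $X_1=X_2\neq0$), and for $p<2$ it only yields the constant $\sqrt2$. Use the power-mean inequality $\sqrt{ab}\le\bigl(\frac{a^p+b^p}{2}\bigr)^{1/p}$ instead. Together with McCarthy's superadditivity $\tr X_1^p+\tr X_2^p\le\tr(X_1+X_2)^p$ and $X_1+X_2=2P$, this gives exactly
\[
\|S\|_p\le\bigl(\|X_1\|_p\|X_2\|_p\bigr)^{1/2}\le\Bigl(\frac{\|X_1\|_p^p+\|X_2\|_p^p}{2}\Bigr)^{1/p}\le\Bigl(\frac{\|X_1+X_2\|_p^p}{2}\Bigr)^{1/p}=2^{1-\frac1p}\|P\|_p .
\]
No optimization over the split is needed.

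The sharpness part has a real gap. The matrices of Proposition~\ref{Zex} are extremal for $\bigl\||S|_{\sym}\bigr\|$, not for $\|S\|$. For them $P=2I_3$ and $A+B$ has singular values $2\sqrt2,2\sqrt2,0$. Hence $\|A+B\|_p/\|P\|_p=\sqrt2\,(2/3)^{1/p}<2^{1-1/p}$ for all $p\ge1$; at $p=\infty$ this is $\sqrt2$ versus $2$.

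Your own chain points to the right extremizer. Equality in McCarthy's step and in the power-mean step holds when $X_1X_2=0$ and $\|X_1\|_p=\|X_2\|_p$, that is, when the right and left moduli are orthogonal and of equal size. A single nilpotent matrix does this. For $A=\begin{pmatrix}0&1\\0&0\end{pmatrix}$ one has $|A|=\operatorname{diag}(0,1)$, $|A^*|=\operatorname{diag}(1,0)$ and $|A|_{\sym}=\frac12 I_2$. Hence $\|A\|_p=1=2^{1-\frac1p}\bigl\||A|_{\sym}\bigr\|_p$ for every $p$. Taking $A_1=A$, $A_k=0$ for $k\ge2$, and padding with zeros gives equality for all $m$ and all $n\ge2$; for $n=1$ the best constant is $1$.
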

\vskip 5pt\noindent

The preceding theorem is a ``sum versus sum'' estimate. A more intrinsic
triangle-type problem is to apply the arithmetic symmetric modulus to the
total sum itself and then compare it with the sum of the individual symmetric
moduli. This leads to the following dimension-sensitive optimization problem.
\begin{prob}[\cite{Zha26b}]\label{prob:schatten-sym-vs-sym-p}
	Let $A_1,\dots,A_m\in\mathbb M_n$. Determine the optimal constant $c_p^{\mathrm{sym}}(m,n)$ such that
	\[
	\Bigl\lVert  \bigl|\sum_{k=1}^m A_k\bigr|_{\mathrm{sym}} \Bigr\rVert_p
	\ \le\ c_p^{\mathrm{sym}}(m,n)\,
	\Bigl\lVert \sum_{k=1}^m \bigl|A_k\bigr|_{\mathrm{sym}} \Bigr\rVert_p .
	\]
\end{prob}
In Problem~\ref{prob:schatten-sym-vs-sym-p}, Zhang \cite{Zha26b} proved that
\begin{enumerate}
	\item		$c_1^{\mathrm{sym}}(m,n)=1$;
	\item  $	c_\infty^{\mathrm{sym}}(m,n)=\sqrt2$ for $n\ge 3$;
	\item $c_p^{\mathrm{sym}}(m,n)\le \min\{2^{1-\frac{1}{p}}, \sqrt{2}\}$;
	\item $	c_p^{\mathrm{sym}}(m,n)\ge \left(\frac{2^{\frac{p}{2}}+2^{1-\frac{p}{2}}}{3}\right)^{\frac{1}{p}}$ for $n\ge 3$;
	in particular, the right-hand side increases from $1$ at $p=2$ to $\sqrt2$ as $p\to\infty$.
\end{enumerate}

\section{Some hybrid triangle inequalities and  further problems}\label{s:hybrid}

The preceding sections treated the right modulus, the quadratic symmetric
modulus, and the arithmetic symmetric modulus separately. We now turn to
inequalities in which these different moduli interact with one another. Such
hybrid estimates often refine the norm inequalities discussed above, and they
also lead naturally to questions involving weak majorization, weak
log-majorization, and unitary-orbit averages.

We begin with the basic comparison between the arithmetic and quadratic
symmetric moduli. By the operator concavity of the square-root function,
\begin{equation}\label{compsqs}
	|Z|_{\sym}\le |Z|_{\qsym}.
\end{equation}
Indeed, this is just the inequality
\[
\frac{|Z|+|Z^*|}{2}
\le
\left(\frac{|Z|^2+|Z^*|^2}{2}\right)^{1/2}.
\]
In fact, a stronger squared form holds. By the operator convexity of \(t^2\),
\[
\left( \frac{|Z|+|Z^*|}{2}\right)^2
\le
\frac{|Z|^2+|Z^*|^2}{2},
\]
or equivalently,
\begin{equation}\label{compsqs2}
	|Z|_{\sym}^2\le |Z|_{\qsym}^2.
\end{equation}
Since the square-root function is operator monotone, \eqref{compsqs2}
implies \eqref{compsqs}. The next proposition \cite{BL26d} records a geometric
interpretation of the stronger inequality \eqref{compsqs2}.

\vskip 5pt\noindent
\begin{prop}\label{ub1}
	Let  $Z\in \bM_n$ and let ${\mathcal{U}}$ be the (closed) unit ball of $\bC^n$. Then,
	$$
	|Z|_{\sym}({\mathcal{U}})\subset |Z|_{\qsym}({\mathcal{U}}).
	$$
\end{prop}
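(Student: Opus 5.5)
We reduce the inclusion of ellipsoids to the operator inequality \eqref{compsqs2} through a general lemma: if $X,Y\in\bM_n^+$ satisfy $X^2\le Y^2$, then $X(\mathcal U)\subset Y(\mathcal U)$. Applying it with $X=|Z|_{\sym}$ and $Y=|Z|_{\qsym}$, so that $X^2\le Y^2$ is exactly \eqref{compsqs2}, gives the proposition.

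To prove the lemma, we use Douglas' factorization. From $X^2=X^*X\le Y^*Y=Y^2$ we get $\|Xh\|\le\|Yh\|$ for all $h$. Hence the map $Yh\mapsto Xh$ is well defined and contractive on the range of $Y$, and we extend it by $0$ on $(\mathrm{ran}\,Y)^\perp$. This gives a contraction $C$ with $X=CY$. Taking adjoints, and using that $X,Y$ are Hermitian, we obtain $X=YC^*$, with $C^*$ still a contraction. Then for $h\in\mathcal U$ we have $Xh=Y(C^*h)$ with $\|C^*h\|\le 1$, so $Xh\in Y(\mathcal U)$. This is the required inclusion.

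As an alternative argument, we could also give the dual description via support functions. For a positive matrix $T$, the set $T(\mathcal U)$ is a closed balanced convex set, and
\[
\sup_{h\in\mathcal U}\mathrm{Re}\langle x,Th\rangle=\|Tx\|.
\]
Thus the inclusion $X(\mathcal U)\subset Y(\mathcal U)$ is equivalent to $\|Xx\|\le\|Yx\|$ for all $x$, that is, to $X^2\le Y^2$. This is the Hahn--Banach separation, applied in $\bC^n$ viewed as the real space $\bR^{2n}$. This version also shows the converse, so the inclusion actually characterizes \eqref{compsqs2}.

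The only delicate point is the noninvertible case. The Douglas construction handles it directly, since $C$ is defined on $\mathrm{ran}\,Y$ and set to zero on its orthogonal complement. Alternatively, we could replace $Y$ by $Y+\eps I$, where $(Y+\eps I)^{-1}X$ is a contraction, and pass to the limit using compactness of $\mathcal U$. The rest is routine.
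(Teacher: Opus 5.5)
Your proof is correct and follows essentially the paper's route: the paper also uses \eqref{compsqs2} to reduce the proposition to the fact that, for $A,B\in\bM_n^+$, $A^2\le B^2$ implies (indeed is equivalent to) $A(\mathcal{U})\subset B(\mathcal{U})$, and proves this by restricting to the range of $B$ so that $B$ is invertible, then noting that $A^2\le B^2$ means $\|B^{-1}A\|_\infty\le 1$. Your Douglas contraction $C^*$ is exactly $B^{-1}A$ on $\mathrm{ran}\,B$, extended by $0$, so it simply absorbs the paper's reduction to the invertible case, and your support-function remark recovers the converse that the paper also records.
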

\vskip 5pt\noindent

\begin{proof} Thanks to \eqref{compsqs2} it suffices to show that, for $A,B\in\bM_n^+$, we have the equivalence
	$$
	A^2\le B^2 \iff A({\mathcal{U}}) \subset B({\mathcal{U}}).
	$$
	Since each side ensures that the range of $A$ is included in the range of $B$, restricting our operators to the range of $B$ if necessary, we may assume that $B$ is invertible. Then
	$$
	A^2\le B^2 \iff B^{-1}A^2B^{-1} \le I \iff \| B^{-1}A\|_{\infty} \le 1 \iff B^{-1}A({\mathcal{U}}) \subset{\mathcal{U}}, 
	$$
	equivalently, $A({\mathcal{U}}) \subset B({\mathcal{U}})$.
\end{proof}

Given $A,B\in\bM_n^+$, we write $A\prec_{\mathrm{wlog}}B$ to mean that the following $n$ inequalities hold:
$$
\prod_{j=1}^p\lambda_{j}^{\downarrow} (A) \le \prod_{j=1}^p\lambda_{j}^{\downarrow} (B), \qquad p=1,\ldots,n.
$$
It is well known that $A\prec_{\mathrm{wlog}} B\Rightarrow A\prec_w B$.

Clearly, Theorem~\ref{thm:sum-vs-qsym} is equivalent to
	$$
\left| \sum_{k=1}^m A_k\right| \prec_{\mathrm{w}} \sqrt{2} \sum_{k=1}^m |A_k|_{\qsym}, \qquad A_k\in\bM_n.
$$ The comparison in Theorem~\ref{thm:sum-vs-qsym} can in fact be strengthened
from weak majorization to weak log-majorization. The following result of
Bourin and Lee gives such a refinement.

\vskip 5pt\noindent
\begin{thm}[{\cite{BL26d}}]
	Let $A_1,\ldots,A_m\in\bM_n$. Then
	\[
	\left| \sum_{k=1}^m A_k\right|
	\prec_{\mathrm{wlog}}
	\sqrt{2}\sum_{k=1}^m |A_k|_{\qsym}.
	\]
\end{thm}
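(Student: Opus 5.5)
The plan is to deduce the statement from a single $2\times 2$ block positivity, combined with Horn's multiplicative singular value inequality. Put $S=\sum_{k=1}^m A_k$,
$$
P:=\sum_{k=1}^m |A_k|,\qquad Q:=\sum_{k=1}^m |A_k^*|,\qquad T:=\sqrt2\sum_{k=1}^m|A_k|_{\qsym}.
$$

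First, for each $Z\in\bM_n$ the block matrix
$$
\begin{bmatrix} |Z| & Z^* \\ Z & |Z^*|\end{bmatrix}\ge 0 .
$$
This follows from the polar decomposition $Z=U|Z|$ with $U$ unitary. Writing $|Z^*|=U|Z|U^*$, the block matrix equals $R^*R$ with $R=\bigl[\,|Z|^{1/2}\ \ |Z|^{1/2}U^*\,\bigr]$. Summing these positive blocks over $Z=A_1,\dots,A_m$ gives
$$
\begin{bmatrix} P & S^* \\ S & Q\end{bmatrix}\ge 0 .
$$
By the standard characterization of positive block matrices, $S=Q^{1/2}KP^{1/2}$ for some contraction $K$ (first replace $P,Q$ by $P+\eps I,Q+\eps I$ and let $\eps\to0$).

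Second, Horn's inequality $\prod_{j\le p}s_j(XYW)\le\prod_{j\le p}s_j(X)s_j(Y)s_j(W)$, together with $s_j(K)\le1$, yields for $p=1,\dots,n$
$$
\prod_{j=1}^p\lambda_j^{\downarrow}(|S|)\le\Bigl(\prod_{j=1}^p\lambda_j^{\downarrow}(P)\Bigr)^{1/2}\Bigl(\prod_{j=1}^p\lambda_j^{\downarrow}(Q)\Bigr)^{1/2}.
$$
If one prefers to avoid citing Horn, the same bound follows by applying the operator-norm version of this factorization to antisymmetric tensor powers, since $\wedge^p$ preserves block positivity.

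Third, I compare $P$ and $Q$ with $T$. From $|Z|^2\le |Z|^2+|Z^*|^2=2|Z|_{\qsym}^2$ and operator monotonicity of the square root, we get $|Z|\le\sqrt2|Z|_{\qsym}$. Likewise $|Z^*|\le\sqrt2|Z|_{\qsym}$. Summing gives $P\le T$ and $Q\le T$, so by the min-max principle $\lambda_j^{\downarrow}(P),\lambda_j^{\downarrow}(Q)\le\lambda_j^{\downarrow}(T)$ for every $j$. Inserting this into the product bound gives $\prod_{j\le p}\lambda_j^{\downarrow}(|S|)\le\prod_{j\le p}\lambda_j^{\downarrow}(T)$ for all $p$, which is exactly the claimed weak log-majorization $|S|\prec_{\mathrm{wlog}}T$.

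The only delicate point is the second step: passing from the block positivity to a product estimate involving two different diagonal blocks $P$ and $Q$. This is where Horn's inequality (or the exterior-power argument) is essential. Ordinary majorization tools would give only weak majorization, not log-majorization. The remaining steps are routine.
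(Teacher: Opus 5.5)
Your proof is correct and complete. The block positivity $\begin{bmatrix} |Z| & Z^*\\ Z & |Z^*|\end{bmatrix}\ge 0$ is verified correctly through $R^*R$. The factorization $S=Q^{1/2}KP^{1/2}$ together with Horn's inequality gives $\prod_{j\le p}\lambda_j^{\downarrow}(|S|)\le\prod_{j\le p}\bigl(\lambda_j^{\downarrow}(P)\lambda_j^{\downarrow}(Q)\bigr)^{1/2}$. The bounds $|Z|,|Z^*|\le\sqrt2\,|Z|_{\qsym}$ follow from operator monotonicity of $t^{1/2}$, exactly as you say. The survey itself gives no proof of this theorem; it only quotes it from \cite{BL26d}. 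So there is no in-paper argument to compare yours against, and your argument stands on its own.

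A few comments on what your route yields.

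\textbf{A shortcut.} Since $P\le T$ and $Q\le T$, the summed block already gives $\begin{bmatrix} T & S^*\\ S & T\end{bmatrix}\ge 0$. Hence $S=T^{1/2}KT^{1/2}$, and Horn gives $|S|\prec_{\mathrm{wlog}}T$ at once. Your intermediate geometric-mean bound is nevertheless sharper and worth recording.

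\textbf{Sharpness.} The constant $\sqrt2$ enters only through the crude bounds $|Z|,|Z^*|\le\sqrt2\,|Z|_{\qsym}$, yet nothing is lost. Indeed, $\prec_{\mathrm{wlog}}$ implies $\prec_w$, and the weak-majorization version, Theorem~\ref{thm:sum-vs-qsym}, is already sharp. For $m=1$ and $A=\begin{pmatrix}0&1\\0&0\end{pmatrix}$ one has $\sqrt2\,|A|_{\qsym}=I$ and $\|A\|_\infty=1$.

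\textbf{The delicate step.} Your remark here is accurate. Test the same block against vectors $x\oplus(-Vx)$, where $S=V|S|$ with $V$ unitary. This gives the operator inequality
\[
|S|\le \tfrac12\bigl(P+V^*QV\bigr)\le \tfrac12\bigl(T+V^*TV\bigr),
\]
which by Ky Fan yields only $|S|\prec_w T$. The multiplicative factorization through the contraction $K$ is what upgrades this to weak log-majorization.
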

\vskip 5pt\noindent

This suggests that several norm inequalities discussed earlier may admit
stronger spectral forms. We therefore record the following open problem.

\begin{prob}
	Determine which of the unitarily invariant norm inequalities in
	Sections~\ref{s:right}--\ref{s:symmetric} can be strengthened to weak
	log-majorization.
\end{prob}

The same circle of ideas also leads to direct comparisons between the
quadratic and arithmetic symmetric moduli. In particular, Bourin and Lee
proved the following averaged unitary-orbit estimate.

\vskip 5pt\noindent
\begin{thm}[{\cite{BL26a}}]
	Let $Z\in \bM_n$. Then, for some unitaries $V_1,\ldots,V_n\in\bM_n$,
	\[
	|Z|_{\qsym}
	\le
	\frac{\sqrt{2}}{n}
	\sum_{i=1}^n V_i |Z|_{\sym}V_i^*.
	\]
	The constant $\sqrt{2}$ cannot be diminished.
\end{thm}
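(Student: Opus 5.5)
The plan has three steps: a weak majorization $|Z|_{\qsym}\prec_w\sqrt2\,|Z|_{\sym}$ obtained from Thompson's inequality, an equal-weight unitary-orbit lemma that turns weak majorization into an average of $n$ conjugates, and a $2\times2$ example for sharpness.

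\emph{Step 1: Thompson applied to a column matrix.} Set $A=|Z|$, $B=|Z^*|$, and consider the $2n\times 2n$ matrices
$$
X=\begin{bmatrix} A&0\\ B&0\end{bmatrix},\qquad X_1=\begin{bmatrix} A&0\\ 0&0\end{bmatrix},\qquad X_2=\begin{bmatrix} 0&0\\ B&0\end{bmatrix},
$$
so that $X=X_1+X_2$. We have
$$
|X|=\sqrt{A^2+B^2}\oplus 0=\sqrt2\,|Z|_{\qsym}\oplus0,\qquad |X_1|=A\oplus 0,\qquad |X_2|=B\oplus 0.
$$
Theorem~\ref{thT}, compressed to the upper-left corner exactly as in the proof of Theorem~\ref{thZ} (first contractions, then unitaries), gives unitaries $U,V\in\bM_n$ with
$$
\sqrt2\,|Z|_{\qsym}\le U|Z|U^*+V|Z^*|V^*.
$$
Subadditivity of the Ky Fan functionals \eqref{variassumeig} then yields
$$
|Z|_{\qsym}\prec_w \tfrac{1}{\sqrt2}\bigl(|Z|+|Z^*|\bigr)=\sqrt2\,|Z|_{\sym}.
$$

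\emph{Step 2: equal-weight orbit lemma.} I will show that for $P,Q\in\bM_n^+$, $P\prec_w Q$ implies $P\le \frac1n\sum_{i=1}^n V_iQV_i^*$ for some unitaries $V_i$.
\begin{enumerate}
\item As in the proof of Corollary~\ref{wmajorbit}, replace $Q$ by some $Q'\le Q$ with $P\prec Q'$, obtained by lowering the smallest eigenvalue of $Q$. This reduces the claim to the case $P\prec Q$.
\item By Proposition~\ref{majconv} together with the Schur--Horn theorem, which I take as the classical converse of Proposition~\ref{Schur}, there is a unitary $W$ with $\Delta(WQW^*)=P^{\downarrow}$.
\item The diagonal pinching is an equal-weight average: with $D=\mathrm{diag}(1,\omega,\ldots,\omega^{n-1})$ and $\omega=e^{2\pi i/n}$,
$$
\Delta(Y)=\frac1n\sum_{i=0}^{n-1}D^iYD^{-i}.
$$
\item Hence $P\simeq P^\downarrow=\frac1n\sum_i D^iWQW^*D^{-i}$, and absorbing the unitaries gives the lemma.
\end{enumerate}
Applying the lemma with $P=|Z|_{\qsym}$ and $Q=\sqrt2|Z|_{\sym}$, which is legitimate by Step~1, gives the stated inequality.

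\emph{Step 3: sharpness.} Take $Z=E_{12}\in\bM_2$, and embed it as a direct summand to reach larger $n$. Then $|Z|=E_{22}$ and $|Z^*|=E_{11}$, so
$$
|Z|_{\qsym}=\tfrac1{\sqrt2}I_2,\qquad |Z|_{\sym}=\tfrac12I_2.
$$
Suppose the inequality held with a constant $c$ in place of $\sqrt2$. Taking traces gives $\sqrt2\le c$, so $c<\sqrt2$ is impossible.

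The main obstacle is Step~2: the equal-weight $\frac1n$ averaging is stronger than the Carath\'eodory convex combination of Corollary~\ref{wmajorbit}. The key is to realize the convex combination as a diagonal pinching via Schur--Horn. I would double-check that Schur--Horn may be invoked, or derive it from Proposition~\ref{majconv} by an induction on $n$ with $2\times2$ rotations.
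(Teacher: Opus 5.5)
Your Steps~2 and~3 are correct, and the overall plan (a weak majorization followed by an equal-weight orbit lemma) is sound. In Step~2, lowering the smallest eigenvalue gives $Q'\le Q$ with $P\prec Q'$. Horn's converse of Schur's theorem holds for Hermitian $Q'$, so it does not matter that $Q'$ may lose positivity. The identity $\Delta(Y)=\frac1n\sum_{i}D^iYD^{-i}$ then turns $\Delta(WQ'W^*)=P^{\downarrow}$ into $P\le\frac1n\sum_{i=1}^nV_iQV_i^*$. In Step~3, $Z=E_{12}$ padded by zeros shows that $\sqrt2$ is optimal for $n\ge2$.

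The gap is in Step~1, which is where the real content lies. From $\sqrt2\,|Z|_{\qsym}\le U|Z|U^*+V|Z^*|V^*$, subadditivity of the Ky Fan functionals only yields
\[
\sum_{j=1}^k\lambda_j^{\downarrow}\bigl(\sqrt2\,|Z|_{\qsym}\bigr)\le\sum_{j=1}^k\lambda_j^{\downarrow}(|Z|)+\sum_{j=1}^k\lambda_j^{\downarrow}(|Z^*|).
\]
This right-hand side is \emph{at least} $\sum_{j=1}^k\lambda_j^{\downarrow}(|Z|+|Z^*|)$, not at most. So what you have actually shown is the weaker relation $|Z|_{\qsym}\prec_w\sqrt2\,|Z|$. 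Your own extremal example exhibits the loss: for $Z=E_{12}$ and $k=1$ your bound is $2$, while $\lambda_1^{\downarrow}(|Z|+|Z^*|)=1$.

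The underlying inference ``$H_0\le UH_1U^*+VH_2V^*$ implies $H_0\prec_w H_1+H_2$'' is false in general. Combined with Thompson's inequality it would give $\|A+B\|\le\||A|+|B|\|$ for all symmetric norms. But the example of Section~\ref{s:right} with $t=1$ has $\||A+B|\|_\infty=2>\sqrt2=\||A|+|B|\|_\infty$; this is why $\sqrt{\min\{m,n\}}$ appears in Theorem~\ref{thm:Lee-optimal}. Two independent unitaries cannot produce a bound in terms of the single matrix $|Z|+|Z^*|$.

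What Step~1 needs is $\sqrt{H^2+K^2}\prec_w H+K$ for $H,K\in\bM_n^+$ (here $H=|Z|$, $K=|Z^*|$), a statement that genuinely uses positivity. It is the case $f(t)=t^{1/2}$, $S=|Z|^2$, $T=|Z^*|^2$ of the Ando--Zhan inequality $\|f(S+T)\|\le\|f(S)+f(T)\|$ for $S,T\in\bM_n^+$ and nonnegative operator monotone $f$, combined with Corollary~\ref{KFprinciple}.

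It can also be obtained with the tools of the paper. Put $R=\begin{bmatrix}|Z|^{1/2}\\ |Z^*|^{1/2}\end{bmatrix}$, $M=RR^*$ and $D=|Z|^{1/2}\oplus|Z^*|^{1/2}$. Then:
\begin{enumerate}
\item $|DR|=\sqrt{|Z|^2+|Z^*|^2}$;
\item the nonzero singular values of $DR$ are those of $M^{1/2}D$, since $DR(DR)^*=DMD$;
\item $D^2$ is the block-diagonal part of $M$;
\item $M$ has the same nonzero eigenvalues as $R^*R=|Z|+|Z^*|$.
\end{enumerate}
Use the Cauchy--Schwarz bound $\|X^*Y\|_{(k)}^2\le\|X^*X\|_{(k)}\|Y^*Y\|_{(k)}$, which follows from Proposition~\ref{KFN2} and Cauchy--Schwarz for $\|\cdot\|_2$, together with the pinching inequality. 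For $k\le n$ this gives
\[
\bigl\|\sqrt{|Z|^2+|Z^*|^2}\bigr\|_{(k)}=\|M^{1/2}D\|_{(k)}\le\|M\|_{(k)}^{1/2}\|D^2\|_{(k)}^{1/2}\le\|M\|_{(k)}=\bigl\||Z|+|Z^*|\bigr\|_{(k)},
\]
that is, $|Z|_{\qsym}\prec_w\sqrt2\,|Z|_{\sym}$. With this in place of your Step~1, your Step~2 finishes the proof. Note that Step~2, which you flagged as the main obstacle, is in fact the standard part of the argument.
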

\vskip 5pt\noindent

The preceding results concern the two endpoint cases \(p=1\) and \(p=2\),
corresponding respectively to the arithmetic and quadratic symmetric moduli.
It is natural to interpolate between them by considering the following
power-mean symmetric modulus. For \(p>0\), define
\[
|Z|_{p,\mathrm{sym}}
:=
\left(\frac{|Z|^p+|Z^*|^p}{2}\right)^{1/p}.
\]

For this family, Zhang obtained a sharp range for the Cartesian
Thompson-type inequality. The result shows that the quadratic case is the
largest power for which such an estimate remains valid.

\vskip 5pt\noindent
\begin{thm}[{\cite{Zha26a}}]
	Fix \(n\ge2\) and \(p>0\). The following assertions are equivalent:
	\begin{enumerate}
		\item \(0<p\le2\);
		\item for every \(Z\in\mathbb M_n\), there exist unitaries
		\(U,V\in\mathbb M_n\) such that
		\[
		|Z|_{p,\mathrm{sym}}
		\le
		U|\Ree Z|U^*+V|\Imm Z|V^* .
		\]
	\end{enumerate}
\end{thm}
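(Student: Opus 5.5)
The implication (1)$\Rightarrow$(2) reduces to Corollary~\ref{symq1}. The plan is to show that $|Z|_{p,\mathrm{sym}}\le |Z|_{\qsym}$ for $0<p\le 2$, and then compose with the unitary orbit inequality $|Z|_{\qsym}\le U|\Ree Z|U^*+V|\Imm Z|V^*$. Since all we need at the end is an eigenvalue order (by the min--max principle, the unitary orbit order is equivalent to the eigenvalue order), it suffices to have $\lambda_j^{\downarrow}(|Z|_{p,\mathrm{sym}})\le\lambda_j^{\downarrow}(|Z|_{\qsym})$ for all $j$; an operator inequality is even better. For $1\le p\le 2$ the function $t\mapsto t^{p/2}$ is operator concave, so
\[
\frac{X^{p/2}+Y^{p/2}}{2}\le\Bigl(\frac{X+Y}{2}\Bigr)^{p/2}
\qquad (X=|Z|^2,\ Y=|Z^*|^2).
\]
Since $t\mapsto t^{1/p}$ is operator monotone only for $p\ge1$, this gives $|Z|_{p,\mathrm{sym}}\le|Z|_{\qsym}$ directly in that range. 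For $0<p<1$ I would instead use the classical fact that the power means $M_p(X,Y)=((X^p+Y^p)/2)^{1/p}$ are monotone in $p$ in the sense of eigenvalues. If that is not available, the fallback is to reduce by monotonicity in $p$, $M_p\le M_1$ for $p\le1$, proved via Ando--Hiai / Lieb-type arguments, and then use the case $p=1$, which is \eqref{compsqs}. Either way, composing with Corollary~\ref{symq1} and absorbing the unitary gives (2).

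For (2)$\Rightarrow$(1) I must produce, for each $p>2$, a matrix $Z\in\bM_2$ (embedded as a direct summand for larger $n$) violating (2). Since (2) implies the Weyl-type bounds of Corollary~\ref{symq2}, it suffices to violate the case $j=k=0$:
\[
\|Z\|_{p,\mathrm{sym}}:=\lambda_1^{\downarrow}(|Z|_{p,\mathrm{sym}})\le \||\Ree Z|\|_\infty+\||\Imm Z|\|_\infty,
\]
or to violate the determinant/trace consequences, for instance $\Tr$ of the left side versus $\Tr|\Ree Z|+\Tr|\Imm Z|$. I would test the nilpotent $Z=\begin{pmatrix}0&2\\0&0\end{pmatrix}$. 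Here $|Z|=\mathrm{diag}(0,2)$ and $|Z^*|=\mathrm{diag}(2,0)$, so
\[
|Z|_{p,\mathrm{sym}}=2^{1-1/p}I,
\]
while $\Ree Z$ and $\Imm Z$ have eigenvalues $\pm1$, so $|\Ree Z|=|\Imm Z|=I$. Then (2) requires $2^{1-1/p}I\le 2I$, which always holds, so this $Z$ is not enough. I would therefore perturb to $Z=\begin{pmatrix}a&2\\0&b\end{pmatrix}$, or take $Z=\begin{pmatrix}0&2t\\0&0\end{pmatrix}\oplus(\cdot)$ as in the $3\times3$ example $C_t$, and compare smallest eigenvalues or determinants, where the two unitaries help least. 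By Weyl, (2) gives
\[
\lambda_n(|Z|_{p,\mathrm{sym}})\le\min\bigl\{\lambda_1(|\Ree Z|)+\lambda_n(|\Imm Z|),\ \lambda_n(|\Ree Z|)+\lambda_1(|\Imm Z|)\bigr\}.
\]
Choosing $Z$ so that $\Ree Z$ has a tiny eigenvalue and $\Imm Z$ a tiny eigenvalue should force the right side to be small while $|Z|_{p,\mathrm{sym}}$ stays large.

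The main obstacle is finding this explicit family and verifying that the failure happens exactly when $p>2$. The mechanism should be that, near equality in the case $p=2$ (where the trace identity $\Tr|Z|_{\qsym}^2=\Tr(|\Ree Z|^2+|\Imm Z|^2)$ is tight), the power mean for $p>2$ strictly exceeds the quadratic one in the top eigenvalue. So I would first look for $Z$ attaining near-equality in Corollary~\ref{symq2} for $p=2$, and then do a first-order expansion in $p$ around $2$.
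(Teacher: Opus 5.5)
Your argument for (1)$\Rightarrow$(2) is sound and can be shortened. For every $0<p\le 2$, operator concavity of $t\mapsto t^{p/2}$, applied to $|Z|^2$ and $|Z^*|^2$, gives $\frac{|Z|^p+|Z^*|^p}{2}\le |Z|_{\qsym}^{\,p}$. Hence $\lambda_j^{\downarrow}(|Z|_{p,\mathrm{sym}})\le\lambda_j^{\downarrow}(|Z|_{\qsym})$, simply because $t\mapsto t^{1/p}$ is increasing on eigenvalues. No operator monotonicity of $t^{1/p}$, no split at $p=1$, and no Ando--Hiai or Lieb-type input is needed. The unitary orbit order and Corollary~\ref{symq1} then give (2).

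The genuine gap is in (2)$\Rightarrow$(1): you never produce a matrix violating (2) for $p>2$. Your first proposed test, the top-eigenvalue case $j=k=0$, can never succeed, because for every $p>0$
\[
\lambda_1^{\downarrow}(|Z|_{p,\mathrm{sym}})=\Bigl\|\frac{|Z|^p+|Z^*|^p}{2}\Bigr\|_\infty^{1/p}\le\|Z\|_\infty\le\|\Ree Z\|_\infty+\|\Imm Z\|_\infty .
\]
Also, the Weyl bound for $\lambda_n$ that you wrote pairs the smallest eigenvalue of one modulus with the largest of the other. So what you need is $\lambda_n^{\downarrow}(|\Ree Z|)$ and $\|\Imm Z\|_\infty$ small simultaneously, while $\lambda_n^{\downarrow}(|Z|_{p,\mathrm{sym}})$ stays comparatively large. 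Asking for "a tiny eigenvalue of each" does not achieve this.

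The missing idea is a nearly normal rank-one matrix: a multiple of the matrix $A_t$ of Section~\ref{s:right}, with $t=\tan\theta\to0$,
\[
Z_\theta=\begin{pmatrix}\cos\theta&\sin\theta\\0&0\end{pmatrix}.
\]
Set $v=(\cos\theta,\sin\theta)^T$ and $e_1=(1,0)^T$. Then $|Z_\theta|=vv^*$ and $|Z_\theta^*|=e_1e_1^*$, both projections, so $|Z_\theta|_{p,\mathrm{sym}}$ has eigenvalues $\cos^{2/p}(\theta/2)$ and $\sin^{2/p}(\theta/2)$. Next, $\Ree Z_\theta$ has eigenvalues $\cos^2(\theta/2)$ and $-\sin^2(\theta/2)$, and $|\Imm Z_\theta|=\frac{\sin\theta}{2}I$.

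By Weyl's inequality, (2) forces $\lambda_2^{\downarrow}(|Z|_{p,\mathrm{sym}})\le\lambda_2^{\downarrow}(|\Ree Z|)+\lambda_1^{\downarrow}(|\Imm Z|)$. With $x=\sin(\theta/2)$ this reads
\[
x^{2/p}\le x^2+x\sqrt{1-x^2}.
\]
For $p=2$ this is $1\le x+\sqrt{1-x^2}$, which is true and asymptotically tight as $x\to0$. For $p>2$ it fails for small $x$, since $x^{2/p}/x\to\infty$. The trace consequence $x^{2/p}+(1-x^2)^{1/p}\le 1+2x\sqrt{1-x^2}$ fails as well. So your near-equality-at-$p=2$ instinct points the right way, but in this family the near-equality is only asymptotic. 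The failure comes from the scaling $x^{2/p}\gg x$, not from a first-order expansion in $p$.

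Finally, replacing $Z_\theta$ by $Z_\theta\oplus 0_{n-2}$ leaves all of these eigenvalues unchanged, so the same counterexample works for every $n\ge2$.
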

\vskip 5pt\noindent

This theorem suggests that the moduli \( |\cdot|_{p,\mathrm{sym}} \) may
exhibit substantially different triangle behavior in the ranges \(0<p\le2\)
and \(p>2\). We therefore leave the following problem for further study.

\begin{prob}
	Study triangle inequalities and optimal constants for the power-mean
	symmetric moduli \( |\cdot|_{p,\mathrm{sym}} \).
\end{prob}

In particular, the quadratic case suggests a broader norm-theoretic question.
Although Thompson-type operator inequalities may fail outside the range
\(0<p\le2\), it is still natural to ask whether the associated symmetric-norm
functionals satisfy a triangle inequality.

\begin{conj}
	Let \(p\ge1\), and let
	\[
	|X|_{p,\mathrm{sym}}
	:=
	\left(\frac{|X|^p+|X^*|^p}{2}\right)^{1/p}.
	\]
	Then, for every symmetric norm \(\|\cdot\|\) on \(\mathbb M_n\), the
	functional
	\[
	X\mapsto \bigl\| |X|_{p,\mathrm{sym}}\bigr\|
	\]
	is a weakly unitarily invariant norm on \(\mathbb M_n\).
\end{conj}


\begin{thebibliography}{99}

	
\bibitem[AB07]{AB07}
J.~S.~Aujla and J.-C.~Bourin,
\emph{Eigenvalue inequalities for convex and log-convex functions},
Linear Algebra Appl. \textbf{424} (2007), no.~1, 25--35.
\doi{10.1016/j.laa.2006.02.027}

\bibitem[And89]{And89}
T.~Ando,
\emph{Majorization, doubly stochastic matrices, and comparison of eigenvalues},
Linear Algebra Appl. \textbf{118} (1989), 163--248.
\doi{10.1016/0024-3795(89)90580-6}

\bibitem[Bha97]{Bha97}
R.~Bhatia,
\emph{Matrix Analysis},
Graduate Texts in Mathematics, vol.~169, Springer-Verlag, New York, 1997.
\doi{10.1007/978-1-4612-0653-8}

	\bibitem[Bha07]{Bha07}
R.~Bhatia,
\emph{Positive Definite Matrices},
Princeton Series in Applied Mathematics,
Princeton University Press, Princeton and Oxford, 2007.

\bibitem[Bou99]{Bou99}
J.-C.~Bourin,
\emph{Some inequalities for norms on matrices and operators},
Linear Algebra Appl. \textbf{292} (1999), no.~1--3, 139--154 \doi{10.1016/S0024-3795(99)00024-5}.

\bibitem[Bou06]{Bou06}
J.-C.~Bourin,
\emph{Matrix versions of some classical inequalities},
Linear Algebra Appl. \textbf{416} (2006), no.~2--3, 890--907.
\doi{10.1016/j.laa.2006.01.002}

\bibitem[Bou10]{Bou10}
J.-C.~Bourin,
\emph{A matrix subadditivity inequality for symmetric norms},
Proc. Amer. Math. Soc. \textbf{138} (2010), no.~2, 495--504.
\doi{10.1090/S0002-9939-09-10103-X}

\bibitem[BL12]{BL12}
J.-C.~Bourin and E.-Y.~Lee,
\emph{Unitary orbits of Hermitian operators with convex or concave functions},
Bull. Lond. Math. Soc. \textbf{44} (2012), no.~6, 1085--1102.
\doi{10.1112/blms/bds080}

\bibitem[BL26a]{BL26a}
J.-C.~Bourin and E.-Y.~Lee,
\emph{Averages over matrix unitary orbits and spectral order},
preprint, arXiv:2606.15624, 2026.
\doi{10.48550/arXiv.2606.15624}

\bibitem[BL26b]{BL26b}
J.-C.~Bourin and E.-Y.~Lee,
\emph{Involutions and angles between subspaces},
Linear Algebra Appl. \textbf{730} (2026), 124--137.
\doi{10.1016/j.laa.2025.10.017}

\bibitem[BL26c]{BL26c}
J.-C.~Bourin and E.-Y.~Lee,
\emph{Matrix parallelogram laws and symmetric moduli},
Internat. J. Math. \textbf{37} (2026), no.~3, 2650018.
\doi{10.1142/S0129167X26500187}

\bibitem[BL26d]{BL26d}
J.-C.~Bourin and E.-Y.~Lee,
\emph{Some hybrid matrix triangle inequalities},
Linear Algebra Appl. \textbf{749} (2026), 197--207.
\doi{10.1016/j.laa.2026.06.033}

\bibitem[BL26e]{BL26e}
J.-C.~Bourin and E.-Y.~Lee,
\emph{Triangle inequalities for the operator symmetric modulus},
Proc. Amer. Math. Soc. 155 (2027), 261--270 \doi{10.1090/proc/17861}. 


\bibitem[Col26]{Col26}
 M.~J.~Colbrook, MI-06, Thompson-type domination for the arithmetic symmetric modulus, Open
 		Problems in Numerical Linear Algebra,  2026,   github.com/ajt60gaibb/OpenProblemsInNLA/tree/main/matrix-inequalities-and-norms/MI-06.

\bibitem[FH55]{FH55}
K.~Fan and A.~J. Hoffman,
\emph{Some metric inequalities in the space of matrices},
Proc. Amer. Math. Soc. \textbf{6} (1955), 111--116.
\doi{10.1090/S0002-9939-1955-0067841-7}

\bibitem[Gol65]{Gol65}
S.~Golden,
\emph{Lower bounds for the Helmholtz function},
Phys. Rev. (2) \textbf{137} (1965), B1127--B1128.
\doi{10.1103/PhysRev.137.B1127}

\bibitem[HJ91]{HJ91}
R.~A.~Horn and C.~R.~Johnson,
\emph{Topics in Matrix Analysis},
Cambridge University Press, Cambridge, 1991.
\doi{10.1017/CBO9780511840371}

\bibitem[HP14]{HP14}
F.~Hiai and D.~Petz,
\emph{Introduction to Matrix Analysis and Applications},
Universitext, Springer, Cham; Hindustan Book Agency, New Delhi, 2014.
\doi{10.1007/978-3-319-04150-6}

\bibitem[Lee10]{Lee10}
E.-Y.~Lee,
\emph{Rotfel'd type inequalities for norms},
Linear Algebra Appl. \textbf{433} (2010), no.~3, 580--584.
\doi{10.1016/j.laa.2010.03.029}

\bibitem[Lee12]{Lee12}
E.-Y.~Lee,
\emph{How to compare the absolute values of operator sums and the sums of absolute values?},
Oper. Matrices \textbf{6} (2012), no.~3, 613--619.
\doi{10.7153/oam-06-42}

	\bibitem[Lie73]{Lie73}
E.~H.~Lieb,
\emph{Convex trace functions and the Wigner--Yanase--Dyson conjecture},
Adv. Math. \textbf{11} (1973), 267--288.
\doi{10.1016/0001-8708(73)90011-X}

\bibitem[LZ22]{LZ22}
J.~Lin and Y.~Zhang,
\emph{A proof of Lee's conjecture on the sum of absolute values of matrices},
J. Math. Anal. Appl. \textbf{516} (2022), no.~2, Paper No.~126542, 6~pp.
\doi{10.1016/j.jmaa.2022.126542}

\bibitem[Neu27]{Neu27}
J.~von Neumann,
\emph{Thermodynamik quantenmechanischer Gesamtheiten},
Nachr. Ges. Wiss. G\"ottingen Math.-Phys. Kl. \textbf{1927} (1927), 273--291.

\bibitem[Neu37]{Neu37}
J.~von Neumann,
\emph{Some matrix-inequalities and metrization of matric-space},
Tomsk Univ. Rev. \textbf{1} (1937), 286--300.

\bibitem[Rot69]{Rot69}
S.~Yu.~Rotfel'd,
\emph{The singular numbers of the sum of completely continuous operators},
in \emph{Spectral Theory}, Topics in Mathematical Physics, vol.~3,
Consultants Bureau, New York, 1969, pp.~73--78.
\doi{10.1007/978-1-4684-7589-0_5}

\bibitem[TZ26]{TZ26}
Q.~Tang and S.~Zhang,
\emph{Generalizing Lee's conjecture on the sum of absolute values of matrices},
Linear Algebra Appl. \textbf{731} (2026), 196--204.
\doi{10.1016/j.laa.2025.11.015}

\bibitem[Tho65]{Tho65}
C.~J. Thompson,
\emph{Inequality with applications in statistical mechanics},
J. Math. Phys. \textbf{6} (1965), no.~11, 1812--1813.
\doi{10.1063/1.1704727}

\bibitem[Tho76]{Tho76}
R.~C. Thompson,
\emph{Convex and concave functions of singular values of matrix sums},
Pacific J. Math. \textbf{66} (1976), no.~1, 285--290.
\doi{10.2140/pjm.1976.66.285}


\bibitem[XZ25]{XZ25}
Z.~Xu and T.~Zhang,
\emph{Sharp perturbation bounds on the Frobenius norm of
subunitary and positive polar factor},
preprint, arXiv:2507.14940v2, 2025.
\doi{10.48550/arXiv.2507.14940}


\bibitem[Zha25a]{Zha25a}
T.~Zhang,
\emph{A new proof of Lee's conjecture on the Frobenius norm via the matrix Cauchy--Schwarz inequality},
Linear Algebra Appl. \textbf{725} (2025), 355--358.
\doi{10.1016/j.laa.2025.07.017}


\bibitem[Zha26a]{Zha26a}
T.~Zhang,
\emph{An operator triangle inequality for the quadratic symmetric modulus},
preprint, arXiv:2602.01463, 2026.
\doi{10.48550/arXiv.2602.01463}

\bibitem[Zha26b]{Zha26b}
T.~Zhang,
\emph{Operator symmetric moduli and sharp triangle inequalities}, J. London Math. Soc. \textbf{114} (2026), Journal London Math. soc. no. 2, Art. e70672.
\doi{10.1112/jlms.70672}.


\end{thebibliography}
\end{document}